\DeclareSymbolFont{bbold}{U}{bbold}{m}{n}
\DeclareSymbolFontAlphabet{\mathbbold}{bbold}
\def\qmod#1#2{{\hbox{}^{\displaystyle{#1}}}\!\big/\!\hbox{}_{
\displaystyle{#2}}}
\def\resto#1#2{{
#1\hskip 0.4ex\vline_{\hskip 0.5ex\raisebox{-0,2ex}
{{${\scriptstyle #2}$}}}}}
\def\at#1#2{
#1\hskip 0.25ex\vline_{\hskip 0.25ex\raisebox{-1.5ex}
{{$\scriptstyle#2$}}}}
\def\A{{\mathbb A}}
\def\C{{\mathbb C}}
\def\L{{\mathbb L}}
\def\P{{\mathbb P}}
\def\Q{{\mathbb Q}}
\def\R{{\mathbb R}}
\def\Z{{\mathbb Z}}
\def\qed {\hfill\vrule height6pt width6pt depth0pt \smallskip}
\def\textmap#1{\mathop{\vbox{\ialign{
                                  ##\crcr
      ${\scriptstyle\hfil\;\;#1\;\;\hfil}$\crcr
      \noalign{\kern 1pt\nointerlineskip}
      \rightarrowfill\crcr}}\;}}
\def\bigtextmap#1{\mathop{\vbox{\ialign{
                                  ##\crcr
      ${\hfil\;\;#1\;\;\hfil}$\crcr
      \noalign{\kern 1pt\nointerlineskip}
      \rightarrowfill\crcr}}\;}}
\newcommand{\cal}{\mathcal}
\def\textlmap#1{\mathop{\vbox{\ialign{
                                  ##\crcr
      ${\scriptstyle\hfil\;\;#1\;\;\hfil}$\crcr
      \noalign{\kern-1pt\nointerlineskip}
      \leftarrowfill\crcr}}\;}}
\def\ag{{\mathfrak a}}
\def\bg{{\mathfrak b}}
\def\hg{{\mathfrak h}}
\def\mg{{\mathfrak m}}
\def\rg{{\mathfrak r}}
\def\ug{{\mathfrak u}}
\def\Ag{{\mathfrak A}}
\def\Bg{{\mathfrak B}}
\def\Hg{{\mathfrak H}}
\def\Ig{{\mathfrak I}}
\def\Jg{{\mathfrak J}}
\def\Mg{{\mathfrak M}}
\def\Ng{{\mathfrak N}}
\def\Rg{{\mathfrak R}}
\def\Sg{{\mathfrak S}}
\def\Ug{{\mathfrak U}}
\newtheorem{sz}{Satz}[section]
\newtheorem{thry}[sz]{Theorem}
\newtheorem{pr}[sz]{Proposition}
\newtheorem{re}[sz]{Remark}
\newtheorem{co}[sz]{Corollary}
\newtheorem{dt}[sz]{Definition}
\newtheorem{lm}[sz]{Lemma}
\def\End{\mathrm {End}}
\def\SU{\mathrm {SU}}
\def\GL{\mathrm {GL}}
\def\SL{\mathrm {SL}}
\def\Pic{\mathrm {Pic}}
\def\NS{\mathrm{NS}}
\def\deg{\mathrm {deg}}
\def\vol{\mathrm{vol}}
\def\id{ \mathrm{id}}
\def\im{\mathrm{im}}
\def\si{\mathrm {si}}
\def\st{\mathrm {st}}
\def\pst{\mathrm{pst}}
\def\ev{\mathrm{ev}}
\def\p{\mathrm{p}}
\def\ASD{\mathrm{ASD}}
\def\HE{\mathrm{HE}}
\newcommand\smvee{{\hskip -0.3ex \raise 0.2ex\hbox{$\scriptscriptstyle\vee$}}}
\newcommand{\longhookrightarrow}{}% teste si deja defini
\DeclareRobustCommand{\longhookrightarrow}{\lhook\joinrel\relbar\joinrel\rightarrow}
\begin{document}
 
 \title[Holomorphic models around the reduction loci]{Instanton moduli spaces on non-Kählerian surfaces. Holomorphic models around the reduction loci}
 \author{Andrei Teleman} 
 \thanks{The author has been partially supported by the ANR project MNGNK, decision 
Nr.  ANR-10-BLAN-0118}
\address{Aix Marseille Université, CNRS, Centrale Marseille, I2M, UMR 7373, 13453 Marseille, France }

\begin{abstract}
Let $\mathcal{M}$ be a moduli space of  polystable rank 2-bundles   bundles with fixed determinant (a moduli space of $\mathrm{PU}(2)$-instantons) on a Gauduchon surface with $p_g=0$ and $b_1=1$. We study the holomorphic structure of  $\mathcal{M}$ around a circle $\mathcal{T}$ of regular reductions.  Our model space is a "blowup flip passage", which is a manifold with boundary whose boundary  is a projective fibration, and whose interior comes with a natural complex structure.    

We prove that a neighborhood of the boundary of the blowup $\hat{\mathcal{M}}_{\mathcal{T}}$ of $\mathcal{M}$  at $\mathcal{T}$ can be smoothly identified with a neighborhood of the boundary of a "flip passage" $\hat Q$, the identification being holomorphic on $\mathrm{int}(\hat Q)$. 
\end{abstract}

\maketitle 
 \section{Introduction} \label{intro}
 
 \subsection{Tori of reductions in instanton moduli spaces} \label{intro1} 

Let $(M,g)$ be a closed, connected, oriented Riemannian 4-manifold, and $(E,h)$ be a Hermitian bundle on $M$. Let $a$ be a Hermitian connection on the Hermitian line bundle $D:=\det(E)$,  denote by ${\cal A}(E)$ be the space of Hermitian connections on $E$, and put
$${\cal A}_a(E):=\{A\in {\cal A}(E)|\ \det(A)=a\}\ ,\ {\cal G}_E:=\Gamma(X,\SU(E))\ ,\ {\cal B}_a(E):=\qmod{{\cal A}_a(E)}{{\cal G}_E}$$
$$
{\cal A}^\ASD_a(E):=\{A\in {\cal A}_a(E)|\ (F_A^0)^+=0\}\ ,\ {\cal M}_a^{\ASD}(E):=\qmod{{\cal A}_a^\ASD(E)}{{\cal G}_E}.$$
Using a well known slice theorem (see for instance \cite{DK}), one can prove that the infinite dimensional quotient ${\cal B}_a(E)$, endowed with the quotient topology, is Hausdorff. Its subspace ${\cal M}_a^{\ASD}(E)$  is finite dimensional and will be called the moduli space  of $a$-oriented  projectively ASD connections on $(E,h)$.  The open subspace ${\cal B}^*_a(E)$ of ${\cal B}_a(E)$ defined by the condition ``$A$ is irreducible" (or equivalently ``the  stabilizer of $A$  with respect to the ${\cal G}_E$-action is $\{\pm\id_E\}$) becomes a real analytic Banach manifold after suitable Sobolev completions, and the corresponding subspace
 ${\cal M}_a^{\ASD}(E)^*\subset {\cal M}_a^{\ASD}(E)$  has the structure of a finite dimensional real analytic space.  The reduction locus ${\cal M}_a^{\ASD}(E)\setminus {\cal M}_a^{\ASD}(E)^*$ can be described as follows.\\
 
 The set of equivalence classes of  decompositions of $E$ (as orthogonal direct sum of line subbundles) can be identified with the quotient
$${\cal D}ec(E):=\qmod{\{c\in H^2(X,\Z)|\ c(c_1(E)-c)=c_2(E)\} }{\iota}
$$ 
where $\iota$ is the involution $c\mapsto c':=c_1(E)-c$. Suppose $b_+(M)=0$ and let $\lambda=\{c,c'\}\in {\cal D}ec(E)$ with $c\ne c'$. The moduli subspace 
$${\cal T}_\lambda:=\{[A]\in {\cal M}_a^\ASD(E)|\ E \hbox{ has an $A$-parallel line bundle $L$ with }  c_1(L)\in \lambda \}
$$ 
 of ${\cal M}_a^\ASD(E)$  is a torus of dimension $b_1(M)$, which will be called the torus of $\lambda$-reductions.  If   $c_1(E)\not\in 2H^2(X,\Z)$, then for every  $\lambda=\{c,c'\}\in {\cal D}ec(E)$ one has $c\ne c'$ and any reduction $A\in{\cal A}_a^{\ASD}(E)\setminus {\cal A}_a^{\ASD}(E)^*$ will be abelian (has $S^1$ as stabilizer). In this case one has 
 \begin{equation}\label{DecRed}
{\cal M}_a^{\ASD}(E)\setminus {\cal M}_a^{\ASD}(E)^*=\coprod_{\lambda\in  {\cal D}ec(E)} {\cal T}_\lambda\ ,
 \end{equation}
so  the reduction locus of ${\cal M}_a^{\ASD}(E)$ is a disjoint union  of tori of dimension $b_1(M)$.

Suppose that $b_+(M)=0$ and  ${\cal T}_\lambda$ is a torus of {\it regular reductions}, i.e. the second cohomology space of the deformation elliptic complex of $A$ vanishes for every $[A]\in {\cal T}_\lambda$.  Then one can define the blowup $\hat {\cal M}_a^{\ASD}(E)_\lambda$ of ${\cal M}_a^{\ASD}(E)$ at the torus ${\cal T}_\lambda$ (see \cite{Te3} section 1.4.2).  By  construction $\hat {\cal M}_a^{\ASD}(E)_\lambda$ has the following properties:
\begin{itemize}
\item comes with a proper surjective map 
$$p_\lambda:\hat {\cal M}_a^{\ASD}(E)_\lambda\to {\cal M}_a^{\ASD}(E)$$
which induces an isomorphism  $\hat {\cal M}_a^{\ASD}(E)_\lambda\setminus p_\lambda^{-1}({\cal T}_\lambda)\to {\cal M}_a^{\ASD}(E)\setminus {\cal T}_\lambda$,
\item has the structure of a smooth manifold with boundary ${\cal P}_\lambda:=p_\lambda^{-1}({\cal T}_\lambda)$ around ${\cal P}_\lambda$,
\item the induced map $\pi_\lambda:{\cal P}_\lambda\to {\cal T}_\lambda$  is the projectivization of a complex vector bundle over ${\cal T}_\lambda$ .
\end{itemize}

The existence of the blowup moduli space $\hat {\cal M}_a^{\ASD}(E)_\lambda$ with the above properties has important consequences:
\begin{enumerate}
\item Using this result, one can prove easily that a torus of regular reductions has a neighborhood $N_\lambda$ which is a locally trivial fiber bundle over ${\cal T}_\lambda$  whose fiber is a cone over a complex projective space, and such that ${\cal T}_\lambda$ corresponds to the vertex section of this cone bundle. In other words  we have a simple {\it topological model} of ${\cal M}_a^\ASD(E)$ around a torus ${\cal T}_\lambda$ of regular reductions: a cone bundle over ${\cal T}_\lambda$ whose fiber is a cone over a complex projective space.
\item Blowing up all tori of reductions ${\cal T}_\lambda$ (which we assume to be regular) in ${\cal M}_a^{\ASD}(E)$ one obtains a blowup moduli space  $\hat {\cal M}_a^{\ASD}(E)$ on which all Donaldson classes $c\in H^*({\cal B}^*_a(E),\Q)$ extend (see \cite{Te5}). In particular, for any homology class $h\in H_*(\hat {\cal M}_a^{\ASD}(E),\Q)$ the evaluation $\langle c,h\rangle$ has sense.

\end{enumerate}

\subsection{Gauduchon stability and the Kobayashi-Hitchin correspondence on non-Kählerian surfaces. The problem} \label{intro2}

 We refer to \cite{LT1} for the general stability theory   on arbitrary compact Gauduchon manifolds. In this article we focus on the situation which is relevant from the point of view of Donaldson theory: rank two bundles on complex surfaces.
 
 Let $X$ be a complex surface endowed with a {\it Gauduchon metric} $g$ \cite{Gau}. A holomorphic rank 2-bundle ${\cal E}$ on $X$ is called
\begin{itemize}
\item  $g$-{\it stable},  if for every line bundle ${\cal L}$ and   sheaf monomorphism ${\cal L}\to {\cal E}$ one has
$\deg({\cal L})<\frac{1}{2}\deg_g(\det({\cal E}))$.
\item  $g$-{\it polystable}, if is either stable, or isomorphic to a direct sum ${\cal L}\oplus {\cal M}$  of line bundles with $\deg_g({\cal L})=\deg_g({\cal M})$.
\end{itemize}

 Let  $E$ be a differentiable   rank 2-bundle on $X$, ${\cal D}$ a fixed holomorphic structure on  $D:=\det(E)$. We denote by ${\cal M}^\st_{\cal D}(E)$, ${\cal M}^\pst_{\cal D}(E)$ the moduli space  of stable (respectively polystable) holomorphic structures on $E$ inducing   ${\cal D}$ on $\det(E)$, modulo the complex gauge group ${\cal G}^\C_E:=\Gamma(X,\SL(E))$ (see the section \ref{semi} of the Appendix).
 ${\cal M}^\st_{\cal D}(E)$ has a natural complex space structure obtained using its open embedding in the corresponding moduli space ${\cal M}^\si_{\cal D}(E)$ of simple holomorphic structures. ${\cal M}^\si_{\cal D}(E)$ is a finite dimensional, but in general non-Hausdorff, complex space \cite{LO}. On the other hand   ${\cal M}^\pst_{\cal D}(E)$ is Hausdorff.   The Hausdorff property of  ${\cal M}^\pst_{\cal D}(E)$   is   a consequence of the Kobayashi Hitchin correspondence, which we recall briefly in our framework.\\
 
Let $h$ be a Hermitian metric on $E$ and let $a$ be the Chern connection of the pair $({\cal D},\det(h))$. The {  Kobayashi-Hitchin correspondence} states that the map 
$$A {\mapsto}\hbox{ the holomorphic structure defined by }  \bar\partial_A$$
induces a   homeomorphism $KH:{\cal M}_a^{\ASD}(E)\to {\cal M}^\pst_{\cal D}(E)$ which restricts to a real analytic isomorphism ${\cal M}_a^{\ASD}(E)^*\to  {\cal M}^\st_{\cal D}(E)$. More precisely we have a commutative diagram 
$$
\begin{array}{c}
\unitlength=1mm
\begin{picture}(50,50)(-42,-38)
\put(-46,8){${\cal B}_a^*(E)$}
\put(-34,8){$\supset$}
\put(20,8){${\cal B}_a(E)$}
\put(15,8){$\subset$}

\put(-30,8){${\cal M}_a^{\ASD}(E)^*$}
\put(-11,8){$\longhookrightarrow$}
\put(-2,8){${\cal M}_a^{\ASD}(E)$}
\put(-24,4){\vector(0, -3){5}}
\put(-31,1){$\scriptstyle KH^*$} \put(-22,1){$\simeq$}
\put(5,4){\vector(0, -3){5}}
\put(6,1){$\scriptstyle KH$} \put(0,1){$\simeq$}
\put(-34,-7){$\supset$}
\put(-48,-7){${\cal M}_{\cal D}^{\rm si}(E)$}
\put(-29,-7){${\cal M}_{\cal D}^\st(E)$}
\put(-12,-7){$\longhookrightarrow$}
\put(-2,-7){${\cal M}_{\cal D}^\pst(E)$ ,}

\end{picture} 
\end{array} 
$$
\vspace{-32mm}\\
where $KH$ is a homeomorphism and $KH^*$ a real analytic isomorphism.

In general ${\cal M}_{\cal D}^\pst(E)$ is not a complex space around {\it the reduction locus} ${\cal R}:={\cal M}_{\cal D}^\pst(E)\setminus {\cal M}_{\cal D}^\st(E)$ (\cite{Te2}, \cite{Te3}), which  can be identified via $KH$ with the subspace of reducible instantons in ${\cal M}_a^{\ASD}(E)$. Under the assumption $c_1(E)\not\in 2 H^2(M,\Z)$) this subspace has been described above in the general gauge theoretical framework.\\

Suppose now that $p_g(X)=0$ and $b_1(X)=1$.  Such a surface is non-Kählerian and has  $b_+(X)=0$.
In previous articles (\cite{Te2}-\cite{Te4}) we have shown that studying  certain  moduli spaces of the form ${\cal M}_{\cal D}^\pst(E)={\cal M}^\ASD_a(E)$ on such surfaces is interesting, important and difficult.  For instance, using a combination of complex geometric and gauge theoretical techniques, we proved geometric properties of  certain moduli spaces of this type and we used them to prove existence of curves on class VII surfaces with small $b_2$. 

An important role in our arguments was played by {\it the circles of reductions} ${\cal T}_\lambda$. Whereas the blowup construction explained above gives a precise    topological description  of  ${\cal M}_a^\ASD(E)$ around a circle ${\cal T}_\lambda$ of regular reductions, understanding the complex structure of ${\cal M}_a^\ASD(E)^*$ around such a circle  is a much more difficult and interesting problem.

The goal of this article is to address this problem, hence to describe explicitly by means of {\it holomorphic models} the complex structure of the end of the moduli space ${\cal M}_a^\ASD(E)^*$ (equivalently of ${\cal M}^\st_{\cal D}(E)$)  towards   a circle ${\cal T}_\lambda$ of regular reductions. More precisely we will  construct 
\begin{itemize}
\item an explicit manifold with boundary ${\cal Q}_\lambda$ with a complex  structure on its interior and whose boundary is a projective bundle over ${\cal T}_\lambda$,
\item a diffeomorphism from ${\cal Q}_\lambda$ onto an open neighborhood $O_\lambda$ of the boundary ${\cal P}_\lambda$ of   $\hat{\cal M}_a^\ASD(E)_\lambda$, which induces  a biholomorphism  $\mathrm{int}({\cal Q}_\lambda)\to \mathrm{int}(O_\lambda)$ and   a bundle isomorphism $\partial{\cal Q}_\lambda\to {\cal P}_\lambda$ over ${\cal T}_\lambda$.
\end{itemize}

Why are we interested in  holomorphic models for the  ends of ${\cal M}_a^\ASD(E)^*={\cal M}^\st_{\cal D}(E)$? The reason is the following problem, which plays an important role in our program for proving existence of curves on class VII surfaces:
\vspace{2mm}\\
{\bf Problem:}   {\it  Suppose $c_1(E)\not\in 2 H^2(X,\Z)$, let $d=4c_2(E)-c_1(E)^2$ be the expected complex dimension of ${\cal M}^\st_{\cal D}(E)$,   and let $Z\subset {\cal M}^\st_{\cal D}(E)$ be a pure $k$ dimensional analytic set with $1\leq k\leq d$.  Determine explicitly the boundary $\delta_\lambda([Z]^{BM})\in H_{2k-1}({\cal P}_\lambda,\Z)\simeq\Z$ of the  Borel-Moore fundamental class of $Z$. } 
\\

Intuitively $\delta_\lambda([Z]^{BM})$ is obtained by intersecting the closure of $Z$ in $\hat {\cal M}^\ASD_a(E)$ with the boundary component ${\cal P}_\lambda$.
Suppose that   ${\cal M}_a^\ASD(E)$ is compact  and all reductions in this moduli space are  regular. These conditions are  satisfied for the moduli spaces studied in \cite{Te2}-\cite{Te3}. Then, for any Donaldson class $c\in H^{2k-1}({\cal B}^*_a,\Q)$ we  will have
\begin{equation}\label{sum}\sum_{\lambda\in{\cal D}ec(E)} \langle \resto{c}{{\cal P}_\lambda}, \delta_\lambda([Z]^{BM})\rangle =0\ .
\end{equation}
The restrictions $\resto{c}{{\cal P}_\lambda}$ have been computed explicitly \cite{Te5}. Therefore, supposing that  the problem above has been solved,   (\ref{sum}) can be interpreted as  an obstruction  to the existence of analytic sets $Z\subset {\cal M}^\st_{\cal D}(E)$  with prescribed topological behavior around the circles of reductions.  We will make use of these ideas in a future article. 
\subsection{Flip passages and the holomorphic model theorem}\label{flips}

 Let $V'$, $V''$ be Hermitian spaces of dimensions $r'$, $r''$.  We let $\C^*$ act on $V'\times V''$ by 
$$\zeta\cdot(y',y''):=(\zeta y',\zeta^{-1} y'')\ .$$ 
The  induced  $S^1$-action has a 1-parameter family $(\mu_t)_{t\in\R}$ of moment maps   given by $\mu_t=-im_t$, where $m_t(y',y'')=\frac{1}{2}(\|y'\|^2-\|y''\|^2)+t.$ The corresponding family of Kähler quotients is:
$$Q_t:=\qmod{m_t^{-1}(0)}{S^1}=\left\{
\begin{array}{ccc}
Q':= {V'_*\times V''}/{\C^*} &\rm for & t<0\\  
Q_0:= \left\{({V'_*\times V''_*})\cup\{0\}\right\}/{\C^*}  &\rm for & t=0\\ 
Q'':= {V'\times  V''_*}/{\C^*} &\rm  for & t>0
\end{array}
\right. \ .
$$
Denoting by  $\Theta'$, $\Theta''$ be the tautological line bundles over $\P(E')$, $\P(E'')$ we get obvious biholomorphisms:
$$Q'\stackrel{\rm bihol}{\simeq}\{\Theta'\otimes V''\}\stackrel{\rm bihol}{\simeq}\{\Theta'\}^{\oplus r''} \ ,\ Q''\stackrel{\rm bihol}{\simeq}\{\Theta''\otimes V'\}\stackrel{\rm bihol}{\simeq}\{\Theta''\}^{\oplus r'}$$
 $Q_0$ can be identified with the image of the natural map $V'\times V''\to V'\otimes V''$, and (if $r'>0$, $r''>0$) it has an isolated singularity. It will be called {\it the singular quotient} and can be identified with the cone (in the algebraic geometric sense) over the image of the Segre embedding  $\P(V')\times\P(V'')\to \P(V'\otimes V'')$.
We also define {\it the   blowup quotient} $\tilde Q$ by  
$$\tilde Q:={p'}^*(\Theta')\otimes {p''}^*(\Theta'')\ ,$$
which is a holomorphic line bundle over $\P(V')\times\P(V'')$. These spaces fit in the commutative diagram
\begin{diagram}[h=5mm]
&&&{ \P(V')\times\P(V'')} &&&\\
&&&\dInto &&&\\
&&&\tilde Q&&&\\
&&\ldTo &&\rdTo &&\\
{ \P(V')}\subset &Q'&\ \ \ \  \leftarrow &  \hbox{\Large a flip}  &\rightarrow \ \ \ \ &Q''&\supset { \P(V'')}\\
&&\rdTo &&\ldTo &&\\
&&&Q_0&&&\\
&&&\uInto &&&\\
&&&\{{ *}\} &&& 
\end{diagram}
in which the four maps define biholomorphisms between the complements of the corresponding closed subspaces appearing in the diagram. The birational map $Q'\dashrightarrow Q''$ is a standard example of a flip (see \cite{R}).\\

Let $p':E'\to B$, $p'':E''\to B$ be holomorphic bundles of ranks $r'$, $r''$ endowed with Hermitian metrics $h'$, $h''$ on a complex manifold $B$. Endow $E:=E'\oplus E''$ with the $\C^*$-action 
$$\zeta\cdot(y',y'')= (\zeta y',\zeta^{-1}y'')\ .$$
Let $f:B\to\R$ a smooth map which is a submersion at any vanishing point. Therefore the zero set $T:=Z(f)$ is an oriented real hypersurface of $B$. We assume that $T$ is compact. We define the smooth family of fiberwise moment maps  associated with $f$ by $\mu^f=-im^f:E\to i\R$, where  $m^f$ is given fiberwise by
$$m_b^f(y',y'')=\frac{1}{2}(\| y'\|^2-\|y''\|^2)+f(b)\ , \ \forall (y',y'')\in E_b \ .
$$
 Put 
 $$Q_f:=\qmod{Z(f)}{S^1}\ ,\ Q^*_f:=\qmod{Z(m)\setminus Z(m)^{S^1}}{S^1}\ .$$
One has an obvious identification (induced by $p$)  
$\{Z(m)\}^{S^1}=T$ . $Q^*_f$ comes with an obvious  open embedding  $Q^*_f\hookrightarrow \{E\setminus B\}/\C^*$, hence is naturally a complex manifold of dimension  $r'+r''+\dim(B)-1$. Therefore $Q_f$ is obtained by adding the real hypersurface $T$   to the  complex manifold $Q^*_f$. Note that $Q_f$ comes with a map $Q_f\to B$ whose fibers are 
$$\{Q_f\}_{b}\simeq \left\{
\begin{array}{ccc} Q' &\rm for & f(b)<0\\
Q_0 &\rm for &f(b)=0\\
 Q'' &\rm for & f(b)>0
\end{array}\right. \ .
$$
Therefore $Q_f$ should be called the {\it flip passage} from $Q'$ to $Q''$ (passing through $Q_0$) associated with the system $(B,E',E'',h',h'',f)$. The complement $T=Q_f\setminus Q^*_f$ is formed by the singularities of the fibers of type $Q_0$.  The {\it blowup flip passage} $\hat Q_f$ is defined by $\hat Q_f:=\widehat{m_f^{-1}(0)}/S^1$, where $\widehat{m_f^{-1}(0)}$ is the spherical blowup of the smooth hypersurface $m_f^{-1}(0)\subset E$ at the $S^1$-fixed point locus.  $\hat Q_f$ is a manifold with boundary, whose interior coincides with the complex manifold ${\cal Q}^*_f$ and whose boundary $\partial \hat Q_f$ can be identified with the projective bundle $\P(\resto{E'}{T}\oplus \resto{\bar E''}{T})$ (see section \ref{BlUpS1Q} for details).\\

Let now $(X,g)$ be a Gauduchon surface with $p_g=0$, $b_1(X)=1$. For such a surface one has $\Pic^0(X)\simeq\C^*$ and, choosing an isomorphism $\C^*\ni\zeta\mapsto {\cal L}_\zeta\in \Pic^0(X)$  in a convenient way, one has
$$\deg_g({\cal L}_\zeta)=C_g\ln|\zeta|\ ,
$$
for a positive constant $C_g$ depending smoothly on $g$. This shows that the level sets of the restriction of $\deg_g$ on any component of $\Pic(X)$ are circles.

Let $(E,h)$  be a Hermitian rank 2-bundle on $X$, ${\cal D}$ a fixed holomorphic structure on $D:=\det(E)$ and $a$ the Chern connection of the pair $({\cal D},\det(h))$. Let $L$ be a line subbundle of $E$ with $2c_1(L)\ne c_1(E)$ and  $\lambda=\{c_1(L),c_1(E)-c_1(L)\}$ the corresponding element of ${\cal D}ec(E)$. Suppose that   ${\cal T}_\lambda\subset {\cal M}_a^\ASD(E)$ is a circle of {\it regular} reductions. 

Fix $x_0\in X$ and let $\mathscr{L}=\mathscr{L}_{x_0}$ be the Poincaré line bundle  associated with the base point $x_0$ (see Definition \ref{Poincare}, section \ref{GaugePic}) on $\Pic^c(X)\times X$ endowed with its canonical Hermitian metric (see Remark \ref{metric}).  Denoting by $p_1:\Pic^c(X)\times X\to \Pic^c(X)$, $p_2:\Pic^c(X)\times X\to X$ the two projections, put
$${\cal H}':=R^1(p_1)_*\big(\mathscr{L}^{\otimes 2}\otimes p_2^*({\cal D}^\vee)\big)\ ,\ {\cal H}'':=R^1(p_1)_*\big(\mathscr{L}^{-\otimes 2}\otimes p_2^*({\cal D})\big)\ ,
$$
and let $f_{\cal D}:\Pic^c(X)\to\R$ be the harmonic map defined by
$$f_{\cal D}([{\cal L}]):=\pi\big(\deg_g({\cal L})-\frac{1}{2}\deg_g({\cal D})\big)\ ,
$$

The vanishing circle $T:=Z(f_{\cal D})$ can be identified  with the reduction torus ${\cal T}_\lambda\subset {\cal M}^\pst_{\cal D}(E)$ using the map $k:T\to {\cal T}_\lambda$ given by
$$k_\lambda({\cal L}):=[{\cal L}\oplus ({\cal D}\otimes {\cal L}^\vee)]\ .
$$ 
 For $\varepsilon>0$  consider the annulus $\Pic^c_\varepsilon(X):=f_{\cal D}^{-1}(-\varepsilon,\varepsilon)$.
Under our assumptions  it follows that, for any sufficiently small $\varepsilon>0$, the restrictions 
$$\resto{{\cal H}'}{\Pic^c_\varepsilon(X)}\ ,\ \resto{{\cal H}''}{\Pic^c_\varepsilon(X)}
$$
are locally free of ranks
$$r'=-\frac{1}{2} (2c -c_1(E))(2c -c_1(E)+c_1(X))\ ,\ r''=-\frac{1}{2} (-2c +c_1(E))(-2c +c_1(E)+c_1(X))
$$
respectively, and for any $l\in \Pic^c_\varepsilon(X)$ one has canonical identifications 
$${\cal H}'(l)=H^1\big(\mathscr{L}^{\otimes 2}_l\otimes p_2^*({\cal D}^\vee)\big)\ ,\ {\cal H}''(l)=H^1\big(\mathscr{L}^{\otimes -2}_l\otimes p_2^*({\cal D})\big)\ .$$
We will see that, for $l\in T$, these spaces come with natural Hermitian products obtained by identifying them with suitable harmonic spaces (see Proposition \ref{H1}), which will be endowed with   standard $L^2$-Hermitian products. In this way we get Hermitian metrics $\hg'$, $\hg''$ on the bundles $\resto{{\cal H}'}{T}$, $ \resto{{\cal H}''}{T}$, which can be extended to get Hermitian metrics $h'$, $h''$ on
$\resto{{\cal H}'}{\Pic^c_\varepsilon(X)}$, $ \resto{{\cal H}''}{\Pic^c_\varepsilon(X)}$.
Let $Q$ ($\hat Q$) be  the (blowup)  flip passage associated with the system
$$\big (\Pic^c_\varepsilon(X), \resto{{\cal H}'}{\Pic^c_\varepsilon(X)}, \resto{{\cal H}''}{\Pic^c_\varepsilon(X)}, h', h'', \resto{f_{\cal D}}{\Pic^c_\varepsilon(X)}\big)\ .$$
 Our holomorphic model theorem states:
 \begin{thry} \label{hol-model}
  
Under the assumptions and with the notations above there exists an open neighborhood $O$ of $\partial\hat Q$ in $\hat Q$ and a diffeomorphism $\chi:O\to {\cal O}_\lambda$ onto a smooth open neighborhood ${\cal O}_\lambda$ of   ${\cal P}_\lambda$ in the blow up moduli space $\hat{\cal M}^\ASD_a(E)_\lambda$ such that
\begin{enumerate}[1.] 
\item  $\chi$ induces a smooth bundle isomorphism 
\begin{equation}\label{bundle-iso}\begin{diagram}[h=7mm]
\P\big(\resto{{\cal H}'}{T}\oplus \resto{\bar {\cal H}''}{T}\big)= \partial\hat Q  &\rTo^{\partial \chi} & {\cal P}_\lambda\\
\dTo & &\dTo^{\pi_\lambda}\\
T&\rTo^{k_\lambda} & {\cal T}_\lambda \ ,
\end{diagram}
\end{equation}
 
\item $\chi$  induces a biholomorphism $O\setminus \partial\hat Q\to {\cal O}_\lambda\setminus {\cal P}_\lambda$.
\end{enumerate}

 \end{thry}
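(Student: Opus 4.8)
The plan is to reduce the statement to a fiberwise computation over the annulus $\Pic^c_\varepsilon(X)$, combining the Kobayashi--Hitchin correspondence with the Kuranishi description of ${\cal M}^\st_{\cal D}(E)$ near a reduction, and then to match the resulting local model with the flip passage $\hat Q$ directly. First I would fix a reduction $[{\cal L}\oplus({\cal D}\otimes{\cal L}^\vee)]\in{\cal T}_\lambda$ and examine the deformation elliptic complex of the associated instanton $A_0$. The diagonal circle $S^1\subset{\cal G}_E$ stabilizing $A_0$ (and, on the holomorphic side, the diagonal $\C^*\subset{\cal G}^\C_E$) acts on this complex, splitting its first cohomology into an invariant summand, tangent to the family of reductions $\Pic^c(X)$, and two summands of opposite weight. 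By Hodge theory and the Dolbeault isomorphism these weight summands are canonically $H^1(\mathscr{L}_l^{\otimes 2}\otimes p_2^*({\cal D}^\vee))={\cal H}'(l)$ and $H^1(\mathscr{L}_l^{\otimes -2}\otimes p_2^*({\cal D}))={\cal H}''(l)$. The regularity hypothesis forces the second cohomology of the complex to vanish along ${\cal T}_\lambda$, which kills the obstruction and, by base change, makes $\resto{{\cal H}'}{\Pic^c_\varepsilon(X)}$ and $\resto{{\cal H}''}{\Pic^c_\varepsilon(X)}$ locally free of the ranks $r'$, $r''$ computed by Riemann--Roch (using $H^0=H^2=0$).

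Next I would assemble these fiberwise data into a family over the annulus. Using the slice theorem of \cite{DK} and the vanishing of obstructions, $KH$ identifies a neighborhood of ${\cal T}_\lambda$ in ${\cal M}^\pst_{\cal D}(E)$ with the $S^1$-quotient ${\cal Q}^*_f\cup T$ of the zero locus of a fiberwise moment map on the total space of ${\cal H}'\oplus{\cal H}''$. The crucial point is to identify this moment map with the one built from $f_{\cal D}$: the stability of a non-split extension $0\to{\cal L}\to{\cal E}\to{\cal D}\otimes{\cal L}^\vee\to0$ (resp. its transpose) reduces to the degree inequality $\deg_g({\cal L})<\tfrac12\deg_g({\cal D})$ (resp. $>$), which is exactly the sign of $f_{\cal D}$. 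Hence the strictly polystable reductions are those lying over $T=Z(f_{\cal D})$, and the two sides $Q'$, $Q''$ of the flip correspond to the two families of stable extensions. This yields a diffeomorphism of a neighborhood of ${\cal T}_\lambda$ onto a neighborhood of $T$ in $Q$.

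To handle the metrics and the blowup, I would use Proposition \ref{H1} to realize ${\cal H}'(l)$, ${\cal H}''(l)$ for $l\in T$ as harmonic spaces equipped with their $L^2$-products, and verify that the metrics $\hg'$, $\hg''$ so obtained coincide with those induced by the natural $L^2$-metric on the deformation space of $A_0$; this compatibility is what makes the $S^1$-moment map on the flip-passage side agree with the gauge-theoretic one. Since the blowup moduli space $\hat{\cal M}^\ASD_a(E)_\lambda$ of \cite{Te3} is produced by the very same spherical-blowup recipe that defines $\hat Q$ (blowing up the $S^1$-fixed locus inside the moment-map zero set), the diffeomorphism above lifts to a diffeomorphism $\chi\colon O\to{\cal O}_\lambda$ of neighborhoods of the boundaries. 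On the boundary it carries $\partial\hat Q=\P(\resto{{\cal H}'}{T}\oplus\resto{\bar{\cal H}''}{T})$ onto ${\cal P}_\lambda$ compatibly with $k_\lambda$, which gives the bundle isomorphism of part (1).

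The main obstacle is part (2): that $\chi$ is \emph{holomorphic} on the interior, not merely real-analytic. The map is produced by gauge theory and $KH$, which a priori respect only the real-analytic structures, whereas $\mathrm{int}(\hat Q)={\cal Q}^*_f$ carries the explicit complex structure of the $\C^*$-quotient of the total space of ${\cal H}'\oplus{\cal H}''$ with its zero section removed. To upgrade $\chi$ to a biholomorphism I would construct an \emph{honest holomorphic family of extensions} parametrized by $\mathrm{int}(\hat Q)$, coming from the universal extension class in the relevant higher direct image, and show it is a semi-universal deformation of each stable bundle it parametrizes, with Kodaira--Spencer map the identification of the first step. The induced modular map to the complex space ${\cal M}^\si_{\cal D}(E)$ is then holomorphic and, by the Kuranishi description, a local biholomorphism onto its image; as it agrees set-theoretically with $\chi$, this forces $\chi$ to be holomorphic. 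The genuine difficulty is to carry this out \emph{in families over the annulus}, where the naive cohomology would jump in rank precisely along $T$ were it not replaced by the locally free pushforwards ${\cal H}'$, ${\cal H}''$, and to keep all identifications compatible as ${\cal L}$ varies---so that the holomorphic family, and with it $\chi$, is holomorphic across the entire interior rather than only fiberwise.
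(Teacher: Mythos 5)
Your overall architecture is close to the paper's: a finite-dimensional reduction of both the holomorphic and the gauge-theoretic pictures near ${\cal T}_\lambda$, an identification of the normal data with ${\cal H}'\oplus{\cal H}''$, and a separate holomorphy argument via a holomorphic family mapping to ${\cal M}^\si_{\cal D}(E)$ (this last step is essentially the paper's map $[\eta^*]$ of Remark \ref{eta}). But there are two genuine gaps. First, you assert that the $S^1$-moment map coming from the ASD and slice equations ``agrees'' with the standard fiberwise moment map $m_b^f(y',y'')=\frac12(\|y'\|^2-\|y''\|^2)+f_{\cal D}(b)$ once the $L^2$-metrics are matched. This is false as an exact statement: after transporting the gauge-theoretic equation $(\Mg)$ to the linearized model $\Hg$ one obtains only a \emph{perturbation} $\varphi$ of $m^f$, and the technical core of the theorem consists of (i) the stability statement that the blowup flip passage near its boundary depends only on the $2$-jet of the defining function along $T$ (Proposition \ref{prop}, via the rescaling Lemmas \ref{implicit} and \ref{renorm}), and (ii) the verification that $\varphi$ has the correct $2$-jet, i.e.\ satisfies P1--P3 (Lemma \ref{P1P2P3}), whose Hessian identity rests on a cancellation using the Gauduchon condition. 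Without some version of (i) and (ii), your ``same spherical-blowup recipe'' step does not go through: a priori the two blowups need not match as manifolds with boundary, let alone compatibly with the complex structure on the interior.

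Second, your holomorphy argument is built on ``an honest holomorphic family of extensions parametrized by $\mathrm{int}(\hat Q)$ coming from the universal extension class''. A generic point of $\mathrm{int}(\hat Q)$ is an orbit $[(\alpha',\alpha'')]$ with \emph{both} components nonzero; the corresponding bundle is not an extension of $\check{\cal L}$ by ${\cal L}$ (neither line subbundle is holomorphic there), so no universal extension class produces it. What is needed is the full family with semiconnection $\bigl(\begin{smallmatrix}\sigma+v&\alpha'\\ \alpha''&\check\sigma-v\end{smallmatrix}\bigr)$, where $v$ solves the integrability equation $\bar\partial v+\alpha'\wedge\alpha''=0$ --- this is the system $(\Ng)$ and the map $\eta$. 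Moreover, to parametrize this family holomorphically over the annulus you cannot use the harmonic spaces directly: the operator families $\sigma\mapsto\Lambda_g\partial_{b'_\sigma}$, $\sigma\mapsto\Lambda_g\partial_{b''_\sigma}$ are antiholomorphic in $\sigma$, and one must replace them by holomorphic extensions from the circle $\Sigma_0$ (Proposition \ref{adjoints}) in order to obtain the holomorphic linearization $\ag$ of Remark \ref{isoiso}. Your diagnosis that the difficulty over the annulus is rank-jumping of cohomology along $T$ is off the mark: under the regularity hypothesis the ranks are constant on ${\cal M}(L)_\varepsilon$; the real difficulties are the two just named.
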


Therefore, around the boundary ${\cal P}_\lambda$, the blowup moduli space   $\hat{\cal M}^\ASD_a(E)_\lambda$ can be identified with a neighborhood of the boundary of the  blowup flip passage associated with the system $\big (\Pic^c_\varepsilon(X), \resto{{\cal H}'}{\Pic^c_\varepsilon(X)}, \resto{{\cal H}''}{\Pic^c_\varepsilon(X)}, h', h'', \resto{f_{\cal D}}{\Pic^c_\varepsilon(X)}\big)$, and this identification respects the complex structure on $ {\cal M}^\ASD_a(E)^*$ induced by the Kobayashi-Hitchin correspondence. The difficult part of the result is the holomorphy property of our model. Note that the holomorphic bundles $\resto{{\cal H}'}{\Pic^c_\varepsilon(X)}, \resto{{\cal H}''}{\Pic^c_\varepsilon(X)}$ are in fact trivial. 

Collapsing to points the fibers of the two projective fibrations in (\ref{bundle-iso}), we obtain the following {\it weaker} holomorphic model theorem:

\begin{co} There exists a homeomorphism $U\to U_\lambda$ from an open neighborhood $U$ of $T$ in  $Q_f$ onto an open neighborhood $U_\lambda$ of ${\cal T}_\lambda$ in ${\cal M}^\ASD_a(E)$, which restricts to the standard identification $k_\lambda:T\to {\cal T}_\lambda$ and a biholomorphism $U\setminus T\to U_\lambda\setminus {\cal T}_\lambda$.
\end{co}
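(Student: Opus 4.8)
The plan is to deduce the corollary from Theorem \ref{hol-model} by \emph{collapsing} the two projective fibrations appearing in the boundary isomorphism (\ref{bundle-iso}). The two collapses in question are blowdown maps. On the moduli side this is the proper surjection $p_\lambda:\hat{\cal M}^\ASD_a(E)_\lambda\to{\cal M}^\ASD_a(E)$: by construction it restricts to the projectivization $\pi_\lambda:{\cal P}_\lambda\to{\cal T}_\lambda$ on the boundary and to an isomorphism $\hat{\cal M}^\ASD_a(E)_\lambda\setminus{\cal P}_\lambda\to{\cal M}^\ASD_a(E)\setminus{\cal T}_\lambda$ away from it, the latter being a biholomorphism for the complex structure induced by Kobayashi--Hitchin on the stable locus. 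On the model side the spherical-blowup construction of $\hat Q$ supplies an analogous map $b:\hat Q\to Q_f$, which collapses $\partial\hat Q=\P\big(\resto{{\cal H}'}{T}\oplus\resto{\bar {\cal H}''}{T}\big)$ onto $T$ through the bundle projection and is the identity, hence a biholomorphism, on the interior ${\cal Q}^*_f=Q^*_f$. Both $b$ and $p_\lambda$ are proper continuous surjections onto locally compact Hausdorff spaces, hence closed, hence quotient maps.

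I would then set $U:=b(O)$ and $U_\lambda:=p_\lambda({\cal O}_\lambda)$. Since $O\supset\partial\hat Q$ and ${\cal O}_\lambda\supset{\cal P}_\lambda$, the sets $O$ and ${\cal O}_\lambda$ are saturated for the respective collapses, so $b^{-1}(U)=O$ and $p_\lambda^{-1}(U_\lambda)={\cal O}_\lambda$, and $U$, $U_\lambda$ are open neighborhoods of $T$, ${\cal T}_\lambda$. The key observation is that $\chi$ carries the fibers of $b$ to the fibers of $p_\lambda$: over the interior both maps have singleton fibers, while over $T$ the square (\ref{bundle-iso}) of Theorem \ref{hol-model}.1 shows that $\chi$ sends the projective fiber of $\partial\hat Q$ above $t$ to the projective fiber of ${\cal P}_\lambda$ above $k_\lambda(t)$. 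Hence $p_\lambda\circ\chi:O\to U_\lambda$ is constant on the fibers of the quotient map $\restobig{b}{O}:O\to U$ and descends to a well-defined map $\psi:U\to U_\lambda$ characterized by $\psi\circ b=p_\lambda\circ\chi$; it is continuous by the universal property of the quotient topology. Repeating the argument with $\chi^{-1}$ yields a continuous two-sided inverse, so $\psi$ is a homeomorphism.

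It then remains only to read off $\psi$ on the two strata. Over $T$ the descent forces $\resto{\psi}{T}=k_\lambda$, because $b$ collapses the fiber above $t$ to $t$, $\chi$ maps it into the fiber of ${\cal P}_\lambda$ above $k_\lambda(t)$, and $p_\lambda$ collapses the latter to $k_\lambda(t)$. Over the complement, $b$ and $p_\lambda$ are biholomorphisms and $\chi$ restricts to the biholomorphism of Theorem \ref{hol-model}.2, so on $U\setminus T$ the map $\psi$ equals $p_\lambda\circ\chi\circ(\restobig{b}{U\setminus T})^{-1}$, a composite of biholomorphisms, hence a biholomorphism $U\setminus T\to U_\lambda\setminus{\cal T}_\lambda$, as required. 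The only input beyond the formal properties of quotient maps is the fiberwise compatibility of $\chi$ with the two collapses, and this is exactly what part 1 of the holomorphic model theorem provides; I therefore expect the descent across the collapsed loci to be the sole point needing care, with no genuine obstacle once Theorem \ref{hol-model} is in hand.
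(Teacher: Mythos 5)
Your proposal is correct and follows exactly the route the paper intends: the corollary is stated there as an immediate consequence of Theorem \ref{hol-model} obtained by ``collapsing to points the fibers of the two projective fibrations in (\ref{bundle-iso})'', and your argument is a careful formalization of that one-line descent through the two blowdown maps $b$ and $p_\lambda$. The only point the paper leaves implicit and you rightly make explicit is that both collapses are proper, hence closed, hence quotient maps, so that the fiberwise compatibility supplied by part 1 of the theorem suffices to descend $\chi$ and $\chi^{-1}$ to mutually inverse continuous maps.
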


\section{$\C^*$-quotients of holomorphic bundles with respect to families of moment maps}

\subsection{Blowup $S^1$-quotients and families of $S^1$-moment maps on Hermitian bundles}\label{BlUpS1Q}

Let $Z$ be a differentiable manifold endowed with an $S^1$-action such that the stabilizer of any point $z\in Z$ is either $S^1$ or trivial. This implies that the fixed point locus $Z^{S^1}$ is a submanifold of $Z$, and that the normal bundle $N_{Z^{S^1}}^Z$ of $Z^{S^1}$ in $Z$ has a complex structure such that the induced action of $S^1$ is the standard one. 
 The induced $S^1$-action one the spherical blowup  \cite{AK} $\hat Z_{Z^{S^1}}$ of $Z$ at  $Z^{S^1}$  is free, and the corresponding quotient has a natural structure of  a manifold with boundary, whose boundary can be identified with $\P(N_{Z^{S^1}}^Z)$. 
 
 \begin{dt} \label{BlUpQ}
 The quotient $\hat Z_{Z^{S^1}}/S^1$, endowed with its natural structure of  a manifold with boundary,  will be called the blowup $S^1$-quotient of $Z$ and will be denoted by $\hat Z/S^1$.
 \end{dt}

Let $p':E'\to B$, $p'':E''\to B$ be the projection maps of two holomorphic vector bundles over a (connected) complex manifold $B$, and $p:E:=E'\times_B E''\to B$  the projection map of their direct sum $E'\oplus E''$. We let $\C^*$ act on $E$ by
$$\zeta(u',u''):=(\zeta u', \zeta^{-1} u'')\ .
$$
Identify  in the obvious way the base $B$ with the image $\{(0'_b,0''_b)|\ b\in B\}\subset E$ of the zero section  of $E$. 
\begin{re}
The induced $\C^*$-action on $E^*:=E\setminus B$ is free. The quotient $E^*/\C^*$ has the structure of a   complex manifold which makes the canonical projection 
$$E^*\to  E^*/\C^*$$
 a holomorphic submersion. This manifold is non-Hausdorff if $r'>0$ and $r''>0$. 
\end{re}
We can obtain a Hausdorff $\C^*$-quotient  using ideas from the theory of Kählerian quotients (see for instance \cite{Te1}). We will not use a moment map for the  induced $S^1$-action on the total space $E$, {\it but a family of fiberwise moment maps parameterized by the base $B$}. Fix Hermitian metrics $h'$, $h''$ on $E'$, $E''$ respectively, and endow every fiber $E_b=E'_b\times E''_b$ with the corresponding product Kähler metric.   
\begin{dt}
Let be $f:B\to \R$   a smooth map. The family of fiberwise moment maps  associated with $f$ is the smooth map $\mu^f=-im^f:E\to i\R$, where  $m^f$ is given fiberwise by
$$m_b^f(y',y'')=\frac{1}{2}(\| y'\|^2-\|y''\|^2)+f(b)\ , \ \forall (y',y'')\in E_b \ .
$$
\end{dt}
The  fiberwise stable locus associated with $\mu^f$ is the open set
$$E^\st_f:=\left\{y=(y',y'')\in E\ \vline\ \left\{\begin{array}{ccc}
y'\ne 0 &\rm if & f(p(y))<0\\
y''\ne 0 & \rm if & f(p(y))>0\\
y'\ne 0 \hbox{ and }y''\ne 0& \rm if & f(p(y))=0
\end{array}
  \right.  \right\}\subset E\ ,
$$
and the quotient ${E^\st_f}/{\C^*}$ is a smooth  complex manifold of dimension $\dim(B)+r'+r''-1$. This quotient is Hausdorff because, as explained in section \ref{flips},  it can be identified with the open subspace $Q^*_f:=\{Z(m^f)\setminus Z(m^f)^{S^1}\}/{S^1}$ of the $S^1$-quotient
$$Q_f:=\qmod{ Z(m^f)}{S^1}
$$
which is Hausdorff, as the quotient of a Hausdorff space by a compact group.  

Recall that in the theory of Kähler quotients one is given a holomorphic action of a complex reductive group $G$ on a Kähler manifold whose Kähler structure is invariant under a maximal compact subgroup $K\subset G$, and a moment map $\mu$ for the induced $K$-action. In this framework the $K$-quotient $Z(\mu)/K$ has the structure of a complex space which can be identified with the {\it good}  $G$-quotient  of the semistable locus (see \cite{Te1}). The analogous property does not hold in our situation, hence the complex structure of $Q^*_f$ might not extend to  $Q_f={ Z(m^f)}/{S^1}$. This happens  because the map $\mu^f=-im^f$ is not a moment map for the $S^1$-action on $E$, but just a smooth family of fiberwise moment maps. The complement   $Q_f\setminus Q^*_f $ can be identified with the fixed point locus $Z(m^f)^{S^1}$, which can be further identified with the zero locus $Z(f)\subset B$. 

  Suppose that $f$ is a submersion at any vanishing point. This implies that the zero locus $T:=Z(f)$ is a smooth, {\it oriented} real hypersurface of $B$. $T$ is oriented in the standard way. With this convention, the oriented manifold $\pm T$ is the boundary of the oriented manifold with boundary $T_\pm:=(\pm f)^{-1}([0,\infty))$.

  Moreover this also implies $m^f$ is a submersion at any   vanishing point, so that    $Z(m^f)$ becomes a smooth smooth, oriented real hypersurface of $E$ which is endowed with an $S^1$-action and intersects the zero section $B$ transversally along the fixed point locus $Z(f)$. This $S^1$-action is free away of this fixed point locus. Therefore, we are precisely in the situation appearing in Definition \ref{BlUpQ}, in which we introduced the concept of a blowup $S^1$-quotient.
  
\begin{dt}  Suppose that $f$ is a submersion at any vanishing point and that the real hypersurface $T=Z(f)$ is compact. The (blowup) flip passage associated with the system $(B,E',E'',h',h'',f)$ is the quotient $Q_f:= { Z(m^f)}/{S^1}$ (respectively the blowup $S^1$-quotient   $\hat Q_f:= {\widehat{Z(m^f)}}/{S^1}$).
\end{dt} 
Therefore, by definition, the blowup flip passage $\hat Q_f$ is a manifold with boundary 
$$\partial  \hat Q_f=\P\big(N_{T}^{Z(m^f)}\big)\ ,
$$
and its interior can be identified with $Q_f^*$, hence it comes with a natural complex structure. Putting
$$F':=\resto{E'}{T}\ ,\ F'':=\resto{E''}{T}\ , 
$$
we have an isomorphism of $S^1$-bundles over $T$
$$\resto{T_{Z(m^f)}}{T}=T_T\oplus (F'\oplus\bar F'')\ ,\  N_{T}^{Z(m^f)}=F'\oplus \bar F''\ ,\ \partial  \hat Q_f=\P(F'\oplus \bar F'')\ .
$$
 Note that the boundary orientation of $\partial  \hat Q_f$ (with respect to the complex orientation of the interior of $\hat Q_f$) does not coincide in general with the natural orientation of the projective fibration $\P(F'\oplus \bar F'')$ over the oriented manifold $T$. The two orientations can be compared easily.

\subsection{Perturbations of $m^f$}\label{pert}

Let  $(B,p':E'\to B, p'':E''\to B, h', h'', f)$ be a system as above, where $f:B\to\R$ is a submersion at any point of the zero locus $T:=Z(f)$, which we assume to be compact.  Put
 $$F':=\resto{E'}{T}\ ,\ F'':=\resto{E''}{T}\   .
 $$

Identifying as usual $B$ with the zero section of $E$ we have a natural identification of $S^1$-bundles
$$\resto{T_E}{B}= T_B\oplus  E' \oplus  \bar E'' \ .
$$

Let $U\subset E$ be  an $S^1$-invariant open neighborhood of  $T\subset E$ and $\varphi:U\to \R $
a smooth, $S^1$-invariant map with the following properties:\\

\begin{enumerate}[P1.]
\item  \label{intersection}  $Z(\resto{\varphi}{B})=T$, and $d_x(\resto{\varphi}{B})=d_x f$ for any $x\in T$.
\item   \label{linearization} For  every $x\in  T$ one has 
$$ \resto{D_x \varphi}{E'_x}=0\ ,\  \resto{D_x \varphi}{E''_x}=0\ ,
$$
\item  \label{hessian} For every  $x\in T$ the point $x$ is a non-degenerate critical point of the fiberwise restriction $\varphi_x:=\resto{\varphi}{E_x \cap U}$, and the second derivative at $x$ of this restriction   is
$$D^2_x(\varphi_x)((u'_1,u''_1),(u'_2,u''_2))=\Re (h'_x(u'_1,u'_2))-\Re (h''_x(u''_1,u''_2))\ .
$$ 
\end{enumerate}
\vspace{3mm}

Using conditions P\ref{intersection}, P\ref{linearization}               we obtain isomorphisms of $S^1$-bundles over $T$
$$\resto{T_{Z(\varphi)}}{T}=T_T\oplus F'\oplus\bar F''\ ,\ N_T^{Z(\varphi)}=F'\oplus \bar F''\ ,
$$
hence $\hat Q_f$ and the blowup $S^1$-quotient of $Z(\varphi)$   have the same boundary.
The following proposition shows that, replacing $m^f$ by $\varphi$ in the definition of $\hat Q_f$ one obtains the same {\it complex} manifold with boundary around this common  boundary.

\begin{pr} \label{prop} Let Let  $(B,p':E'\to B, p'':E''\to B, h', h'', f)$ be a system as above, where $f:B\to\R$ is a submersion at any point of $T:=Z(f)$, which we assume to be compact. Let  $\varphi:U\to\R$ be a smooth, $S^1$-invariant map satisfying the conditions P1 - P3 above.
There exists an $S^1$-invariant, open neighborhood $V$ of $T$ in $U$ such that
\begin{enumerate}[1.]
\item $\varphi$ is a submersion at any vanishing  point  of $\resto{\varphi}{V}$, in particular the zero locus $Z_{V,\varphi}:=Z(\resto{\varphi}{V})$ is a smooth real hypersurface of $V$.
\item Denoting $ Z_{V,\varphi}^*:=Z_V^\varphi\setminus B$ one has $Z_{V,\varphi}^*\subset E_f^\st$.

\item  The map
$Q^*_{V,\varphi}:= {Z_{V,\varphi}^*}/{S^1}\to  { E_f^\st}/{\C^*} =Q^*_f
$
induced by the inclusion $Z_{V,\varphi}^*\hookrightarrow E^\st_f$ has the properties:
\begin{enumerate}
\item \label{3a} extends to a smooth open embedding of manifolds with boundary
$$\hat Q_{V,\varphi}:=\qmod{\widehat{Z_{V,\varphi}}}{S^1} \hookrightarrow \qmod{\widehat{Z(m^f)}}{S^1}=\hat Q_f
$$
which induces the identity map between the boundaries (via the identifications $\partial \hat Q_{V,\varphi}=\P(F'\oplus \bar F'')=\partial \hat Q_f$).
\item \label{3b} is a an open embedding  which becomes holomorphic if we endow $Q^*_{V,\varphi}$ with the complex structure induced from  ${ E_f^\st}/{\C^*}$. \end{enumerate}
\end{enumerate}
 
\end{pr}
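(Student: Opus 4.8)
The plan is to deduce all four assertions from a single quantitative fact: the derivative of $\varphi$ along the generator $\xi$ of the $\R_{>0}$-scaling is strictly positive near $T$ on the loci that matter. Write $\xi(y',y'')=(y',-y'')$ for the radial vector field generating the $\R_{>0}\subset\C^*$ action, and set $\psi:=d\varphi(\xi)$ and $\eta:=\varphi-m^f$. For the honest family of moment maps one computes $dm^f(\xi)=\|y'\|^2+\|y''\|^2$ exactly, so along every fibre $\psi=\|y'\|^2+\|y''\|^2+d\eta(\xi)$, and the whole argument reduces to controlling the perturbation term $d\eta(\xi)$. Geometrically, positivity of $\psi$ means that the $\C^*$-orbits cross $Z(\varphi)$ transversally with $\varphi$ increasing, which is exactly what is needed to compare $Z(\varphi)$ with $Z(m^f)$ along orbits.

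The \emph{key estimate, and the main obstacle}, is this positivity. Conditions P1 and P2 say precisely that $\eta$ vanishes to first order along $T\subset E$ (same value $0$ and same differential as demanded), and P3 says that its fibrewise Hessian vanishes on $T$. Choosing normal coordinates $(t,n)$ on $B$ along $T$, so that $f=c(t)\,n+O(n^2)$ with $c\ne0$, and writing $v=(v',v'')$ for the fibre coordinate, a Taylor expansion gives $\partial_v\eta=O(|n|+|v|^2)$, whence $d\eta(\xi)=O\big((|n|+|v|^2)|v|\big)$ and
$$\psi=|v|^2+O\big(|n|\,|v|+|v|^3\big)\ .$$
This is not yet visibly positive where $|n|\gg|v|$; the decisive point is that this regime does not occur on the zero loci. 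On $Z(m^f)$ one has $f(b)=-\tfrac12(\|v'\|^2-\|v''\|^2)=O(|v|^2)$, and on $Z(\varphi)$ the same holds up to the $O(|v|^3)$ contribution of $\eta$; since $|f(b)|\ge c'|n|$ by the submersion property and compactness of $T$, both hypersurfaces satisfy $\mathrm{dist}(b,T)=|n|=O(|v|^2)$ near $T$. Substituting back yields $\psi=|v|^2+O(|v|^3)>0$ on $\big(Z(m^f)\cup Z(\varphi)\big)\setminus B$ near $T$, and more uniformly $\psi>0$ on the region $\{|n|\le\epsilon|v|\}$, which also contains the short $\R_{>0}$-orbit segments joining the two hypersurfaces, inside a small $S^1$-invariant neighborhood $V$ of $T$; all error constants are uniform by compactness of $T$.

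Granting this, I would proceed as follows. At a zero of $\varphi$ lying on $B$, i.e. on $T$, one has $d\varphi=df\ne0$ by P1, while off $B$ the positivity of $\psi$ forces $d\varphi\ne0$; together this gives the submersion statement (1). For (2), on $Z(\varphi)\setminus B$ the identity $\tfrac12(\|v'\|^2-\|v''\|^2)=-f(b)+O(|v|^3)$ with $f(b)=O(|v|^2)$ shows that $v'=0$ forces $f(b)=\tfrac12\|v''\|^2+O(\|v''\|^3)>0$ and $v''=0$ forces $f(b)<0$; the three defining conditions of $E^\st_f$ follow after shrinking $V$, proving $Z^*_{V,\varphi}\subset E^\st_f$. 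For the comparison I then define the flow $\Phi(y):=e^{s(y)}y$ carrying a neighborhood of $T$ in $Z(m^f)$ onto $Z^*_{V,\varphi}$, where $s(y)$ is the unique small root of $s\mapsto\varphi(e^sy)$: since $\partial_s\varphi(e^sy)=\psi>0$ along the relevant segment and $\varphi(y)=\eta(y)$ is small with $s(y)=O(|v|)\to0$, the implicit function theorem makes $s(y)$ smooth, hence $\Phi$ a smooth $S^1$-equivariant diffeomorphism.

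Because $\Phi$ moves points inside their $\C^*$-orbits, it satisfies $\pi\circ\Phi=\pi$ for the holomorphic quotient map $\pi\colon E^*\to E^*/\C^*$; consequently $\pi|_{Z^*_{V,\varphi}}$ descends to an open embedding $Q^*_{V,\varphi}\to Q^*_f$ with image $\pi\big(\Phi(\text{nbhd})\big)$, which is holomorphic once $Q^*_{V,\varphi}$ carries the complex structure induced from $E^\st_f/\C^*=Q^*_f$ — this is (3b), and no further holomorphy check is needed, the structure being transported along the biholomorphism $\pi$. Finally, for (3a) I would pass to the spherical blowups, which share the boundary $\P(F'\oplus\bar F'')=\P(N_T^{Z(m^f)})=\P(N_T^{Z(\varphi)})$ by P1–P3. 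Writing $v=\rho\,\omega$ with $\rho=|v|\ge0$ and $\omega$ in the unit sphere bundle, the relation $s(y)=O(\rho)$ lets me write $s=\rho\,\sigma$ with $\sigma$ smooth up to $\rho=0$; substituting shows the flow extends to a smooth diffeomorphism of the blowups near $T$, hence the induced map is a smooth open embedding $\hat Q_{V,\varphi}\hookrightarrow\hat Q_f$ of manifolds with boundary, and since $s\to0$ as $\rho\to0$ the induced boundary map fixes every $\omega$, i.e. is the identity on $\P(F'\oplus\bar F'')$. The entire difficulty is concentrated in the positivity estimate of the second paragraph; once $\psi>0$ is secured on the zero loci, parts (1)–(3) are formal consequences of the flow $\Phi$ and the $\C^*$-invariance of $\pi$.
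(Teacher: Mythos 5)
Your overall strategy is essentially the one the paper uses: you compare $Z(\varphi)$ with $Z(m^f)$ by sliding along the $\R_{>0}$-orbits of the $\C^*$-action, you use P1--P3 to show that $\varphi-m^f$ vanishes to the right order along $T$ (which gives both the location estimate $|n|=O(|v|^2)$ on the zero loci and the inclusion $Z^*_{V,\varphi}\subset E^\st_f$), and you then try to push the orbit-reparametrization down to the spherical blowups. Your map $\Phi(y)=e^{s(y)}y$ is, up to the substitution $\rho=e^{s}$ and the direction of travel, exactly the paper's map $U(y)=\rho(y)\cdot y$, and your positivity of $\psi=d\varphi(\xi)$ plays the role of the paper's nonvanishing of $\partial_\rho\hat\alpha=\hat\beta$ in Lemma \ref{implicit}. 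Parts (1), (2) and (3b) are handled correctly (if slightly more circuitously than necessary for (1), where an openness argument from P1 suffices).

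The genuine gap is the last step: the inference ``$s(y)=O(\rho)$, hence $s=\rho\sigma$ with $\sigma$ smooth up to $\rho=0$'' is not valid. A function that is smooth away from the zero section and $O(\rho)$ need not extend smoothly to the spherical blowup, let alone be divisible by $\rho$ there: $\rho\sin(\log\rho)$ is smooth for $\rho>0$ and $O(\rho)$, but neither it nor its quotient by $\rho$ extends even continuously with its derivatives to $\rho=0$. Since $s$ is defined only implicitly (as the root of $s\mapsto\varphi(e^{s}y)$), a size bound cannot substitute for a regularity statement on the blowup. This is precisely the technical heart of the paper's proof, handled by Lemma \ref{renorm} and Lemma \ref{implicit}: one divides the defining equation by $\|v\|^2$, shows via Taylor expansion with integral remainder that the renormalized functions (and the renormalized $\rho$-derivative, your $\psi/\|v\|^2$) extend \emph{smoothly} to the blowup, checks that on the boundary the renormalized equation has the unique nondegenerate solution $\rho=1$ (i.e.\ $s=0$), and only then applies the implicit function theorem \emph{on the blowup} to conclude that $s$ extends smoothly with boundary value $0$. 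You have all the ingredients for this (your Taylor expansions of $\eta$ and $\psi$ are exactly the required renormalized data), but the step must be carried out on the blowup rather than deduced from the pointwise bound $s=O(\rho)$. Without it, neither the smoothness of $\hat Q_{V,\varphi}\hookrightarrow\hat Q_f$ up to the boundary nor the identification of the boundary map with the identity is established.
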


\begin{proof}
Note first that a sufficiently small  tubular neighborhood  of $T$ in $B$ can be smoothly identified with $(-\varepsilon,\varepsilon)\times T$ such that $f$ is given by the projection on the first factor. Furthermore, using Hermitian connections on $E'$,  $E''$ and parallel transport along the curves $(-\varepsilon,\varepsilon)$, one can identify these Hermitian bundles with $E'=  F'\times (-\varepsilon,\varepsilon)$, $E''=  F''\times (-\varepsilon,\varepsilon)$ where $F':=\resto{E'}{T}$, $F'':=\resto{E''}{T}$. Therefore, since our problem is local with respect to $T$, we can suppose that

\begin{itemize}
\item  $B= T\times (-\varepsilon,\varepsilon)$ (as real differentiable manifolds) and $f: T\times (-\varepsilon,\varepsilon)\to \R$ is given by the projection on the second factor.
\item  $E'=  F'\times(-\varepsilon,\varepsilon)$, $E''=  F''\times (-\varepsilon,\varepsilon)$, where $q':F'\to T$, $q'':F''\to T$  are Hermitian bundles on $T$. 
\end{itemize}
1.  By  P\ref{intersection} the map $\varphi$ is a submersion at any point $x\in T$, hence it is a submersion on an open neighborhood $V$ of $T$. Since $S^1$ is compact we may suppose that $V$ is $S^1$-invariant.\\
\\
2. Put $F:=F'\times_T F''$. Using P\ref{linearization} we see that the hypersurface $Z_{V,\varphi}\subset V$ is vertical along $T$, i.e,  the restriction of its tangent bundle to $T$ coincides with $F$. Therefore,  we may suppose that, for sufficiently small, relatively compact, $S^1$-invariant open neighborhoods $V'$, $V''$ of $T$ in $F'$ and $F''$, the following holds:
\begin{itemize}
\item  $V=(V'\times_T V'')\times (-\varepsilon,\varepsilon)$,
\item  $Z_{V,\varphi}$ is the graph of an $S^1$-invariant function 
$\chi: V'\times_T V''\to (-\varepsilon,\varepsilon)$
 which vanishes on $T$, and whose differential vanishes at any point of $T$.
\end{itemize}
   In other words the first order jet of $\chi$ along $T$ vanishes. The equality $Z_{V,\varphi}=\mathrm{graph}(\chi)$ implies $\varphi(y,\chi(y))=0$ for any $y\in V'\times_T V''$. Differentiating  twice this identity at a point $x\in T$ in fiber directions, and taking into account P\ref{intersection} and P\ref{hessian} we obtain
$$D^2_{x}\chi((u_1',u_1''),(u_2',u_2''))=- D^2_x(\varphi_x)((u'_1,u''_1),(u'_2,u''_2))$$ $$=-\Re (h'_x(u'_1,u'_2))+\Re(h''_x(u''_1,u''_2))\ .
$$  
The order 2-Taylor development of the fiber restriction $\chi_x:=\resto{\chi}{F_x}$  reads
\begin{equation}\label{chi}
\chi_x(u',u'')=-\frac{1}{2}(h'_x(u',u')-h''_x(u'',u''))+r_x(u',u'')\ ,
\end{equation}
where, putting $u=(u',u'')$ the rest  $r_x$ is given by the integral formula
\begin{equation}\label{rest}
r_x(u)=  \int_0^1\frac{(1-t)^2}{2} D^3_{tu}(\chi_{x})(u,u,u)dt\ .
\end{equation}
Since we supposed $V'$, $V''$ to be relatively compact, we get a uniform bound
\begin{equation}\label{Rest}
|r_x(u',u'')|\leq M \{h'_x(u',u')+ h''_x(u'',u'')\}^{\frac{3}{2}}\ .
\end{equation}
Taking $V'$, $V''$ sufficiently small   we will have 
\begin{equation}\label{estimates}
|r_x(u',0)|\leq \frac{h'_x(u',u')}{4}  ,\ |r_x(0,u'')|\leq \frac{h''_x(u'',u'')}{4} ,\ \forall x\in T\ \forall (u',u'')\in V'_x\times V''_x .
\end{equation}
On the other hand, letting $x$ vary in $T$, the hypersurface 
$$Z_{V,\varphi}=\mathrm{graph}(\chi)\subset  (V'\times_T V'')\times (-\varepsilon,\varepsilon)$$
 is defined by the equation
$$t=-\frac{1}{2}(h'(u',u')-h''(u'',u''))+r(u',u'')\ ,
$$
hence, taking into account (\ref{estimates}), we obtain
$$(u',0,t)\in Z_{V,\varphi}^*\Rightarrow t< 0\ ,\ (0,u'',t)\in Z_{V,\varphi}^*\Rightarrow t> 0\ ,$$
hence  $Z_{V,\varphi}^*\subset E^\st_f$ as claimed.
\\ \\
3. We have to compare the quotients $\hat Q_{V,\varphi}=\widehat{Z_{V,\varphi}}/S^1$, $\hat Q_f=\widehat{Z(m^f)}/{S^1}$.  We will first compare the two hypersurfaces 
$$Z_{V,\varphi}= \mathrm{graph}(\chi)\ ,\ Z(m^f)=\mathrm{graph}(\chi_f)\ ,$$
where $\chi:V'\times_TV''$ has been defined above and $\chi_f:F'\times_T F''\to\R$ is given by
$$\chi_f(v',v''):=-\frac{1}{2}(h'_x(v',v')-h''_x(v'',v''))\ .
$$
The maps $g_f:F=F'\times_T F''\to Z(m^f)$, $g:V'\times_T V''\to Z_{V,\varphi}$ given by
$$g_f(v',v''):=(v',v'',\chi_f(v',v''))\ ,\ g(v',v''):=(v',v'',\chi(v',v''))
$$
are $S^1$-equivariant diffeomorphisms, hence they induce $S^1$-equivariant diffeomorphisms
$$\hat g_f:\hat F\to \hat Z(m^f)\ ,\ \hat g:\widehat{V'\times_T V''}\to \hat Z_{V,\varphi}
$$
between the corresponding spherical blowups.
 Note now that the $\C^*$-action on $F$ given by $(\zeta,v',v'')\mapsto (\zeta v',\zeta^{-1} v'')$ extends to a $\C^*$-action on the spherical blowup $\hat F$, which is given by
$$\zeta\cdot (r,w)= \bigg( r ( |\zeta|^2\|w'\|^2+|\zeta|^{-2}\|w''\|^2)^{\frac{1}{2}}, \frac{1}{(|\zeta|^2\|w'\|^2+|\zeta|^{-2}\|w''\|^2)^{\frac{1}{2}}}(\zeta w', \zeta^{-1} w'')\bigg)
$$
and leaves invariant $\partial \hat F$.
Define $U:\widehat{V'\times_T V''}\to \widehat {F'\times_T F''}$   by %
$$U(y)=\rho(y)\cdot y\ ,$$
where $\rho:\widehat{V'\times_T V''}\to \R_{>0}$ is the smooth function given by Lemma \ref{implicit} below. This map acts as the identity on the boundary $\widehat{V'\times_T V''}$ and on its normal line bundle. The composition  $\Ug:=\hat g_f\circ U\circ \hat g^{-1}$ is a smooth, $S^1$-equivariant map $\hat Z_{V,\varphi}\to \hat Z(m^f)$ which induces $\id_{S(F)}$ on the common boundary, and is a local diffeomorphism at any point of this boundary. Therefore, passing to $S^1$-quotients, $\Ug$ induces a smooth map $\ug=\hat Q_{V,\varphi}\to \hat Q_f$ which acts as identity on $\partial \hat Q_{V,\varphi}$ and is a local diffeomorphism at any point of $\partial \hat Q_{V,\varphi}$.  Applying the inverse function theorem at the boundary points and replacing   $V$ by a smaller $S^1$-invariant open neighborhood of $T$ if necessary, $\ug$ will become a smooth open embedding. Moreover, using (\ref{chi-eq}), we see that for a point $y=(v',v'',t)\in Z_{V,\varphi}^*$ one has 
$$\Ug(y)=(\rho(y) v',\rho(y)^{-1} v'',t)\in \C^* y\ ,$$
 which shows that $\ug$ is an extends the natural map $Q^*_{V,\varphi}\to Q^*_f$  induced by the inclusion $Z_{V,\varphi}^*\hookrightarrow E^\st_f$. This proves claim \ref{3a}. Claim \ref{3b} is an obvious consequence of \ref{3a}. 
\end{proof}
\begin{lm}\label{implicit}
Let $V'$, $V''$ be  open,  $S^1$-invariant open neighborhoods of the zero sections in $F'$, $F''$ as in the proof of conclusions 1., 2. of Proposition \ref{prop}.  The equation
\begin{equation}\label{chi-eq}\chi_f(\rho v',\rho^{-1} v'')=\chi(v',v'')
\end{equation}
has a unique solution for every $(v',v'')\in (V'\times_T V'')\setminus T$ and the obtained function $(V'\times_T V'')\setminus T\to (0,\infty)$ extends to a smooth, $S^1$-invariant function %
$$\rho: \widehat{V'\times_T V''}\to (0,\infty)$$
with $\resto{\rho}{S(F'\oplus F'')}\equiv 1$.\end{lm}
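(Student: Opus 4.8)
The plan is to solve equation (\ref{chi-eq}) explicitly as a quadratic in the single unknown $s:=\rho^2$, and then to establish smoothness up to the exceptional boundary by passing to polar coordinates on the spherical blowup. Writing $a:=h'_x(v',v')=\|v'\|^2$, $b:=h''_x(v'',v'')=\|v''\|^2$ and $c:=\chi(v',v'')$, and using $\chi_f(\rho v',\rho^{-1}v'')=-\tfrac12(\rho^2 a-\rho^{-2}b)$, equation (\ref{chi-eq}) becomes, after multiplying by $-2s$,
$$a\,s^2+2c\,s-b=0.$$
For a fixed fiber over $x\in T$ this is a genuine quadratic as soon as $a>0$, with unique nonnegative root $s=\tfrac1a\big(-c+\sqrt{c^2+ab}\big)$.

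First I would prove existence and uniqueness of a strictly positive root off $T$ by a short case analysis, and this is exactly where the estimates (\ref{estimates}) enter. When $a>0$ and $b>0$ one has $ab>0$, so $\sqrt{c^2+ab}>|c|$ and the root above is positive. If $v''=0$ but $v'\ne0$, then by (\ref{chi}) and $|r_x(v',0)|\le\tfrac14\|v'\|^2$ one gets $c\le-\tfrac14\|v'\|^2<0$, whence $s=-2c/a>0$; symmetrically, if $v'=0$ but $v''\ne0$ then $c\ge\tfrac14\|v''\|^2>0$, the equation degenerates to the linear $2cs-b=0$, and $s=b/(2c)>0$. This yields a unique $\rho=\sqrt s>0$ on $(V'\times_T V'')\setminus T$, and its $S^1$-invariance is immediate from the $S^1$-invariance of $\|v'\|$, $\|v''\|$ and of $\chi$, by uniqueness of the positive root. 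Smoothness on the interior follows from the implicit function theorem: at a root $s_0>0$ of $a s^2+2cs-b$ one computes $\partial_s=(a s_0^2+b)/s_0>0$ since $a$ or $b$ is positive off $T$.

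The heart of the matter is the smooth extension across $\partial\widehat{V'\times_T V''}=S(F'\oplus F'')$. Near the boundary I would introduce blowup coordinates $(r,w)=(r,(w',w''))$ with $r\ge0$, $\|w'\|^2+\|w''\|^2=1$ and $(v',v'')=(rw',rw'')$, so that $a=r^2\|w'\|^2$ and $b=r^2\|w''\|^2$. The essential analytic input is that the remainder is divisible by $r^3$ with smooth quotient: substituting $u=rw$ into the integral formula (\ref{rest}) and factoring three powers of $r$ out of the trilinear term $D^3_{trw}(\chi_x)(rw,rw,rw)$ shows $r_x(rw',rw'')=r^3\,\tilde r(x,r,w)$ with $\tilde r$ smooth, so by (\ref{chi}) one has $c=r^2\gamma$ with $\gamma(x,r,w):=-\tfrac12(\|w'\|^2-\|w''\|^2)+r\,\tilde r(x,r,w)$ smooth. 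Dividing the quadratic by $r^2$ therefore produces the equation
$$P(s,x,r,w):=\|w'\|^2 s^2+2\gamma\, s-\|w''\|^2=0$$
with coefficients smooth in $(x,r,w)$ up to $r=0$. At $r=0$ one has $\gamma=-\tfrac12(\|w'\|^2-\|w''\|^2)$, and substituting $s=1$ gives $P=0$, i.e. $\rho\equiv1$ on $S(F'\oplus F'')$, as required.

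Finally I would conclude smoothness across the boundary by the implicit function theorem applied to $P$: at a positive root $s_0$ one finds $\partial_s P(s_0)=(\|w'\|^2 s_0^2+\|w''\|^2)/s_0>0$, strictly positive because $(w',w'')$ is a unit vector and $s_0>0$, uniformly on the compact boundary and hence on a neighborhood of it — even along the degenerate direction $\|w'\|^2=0$ where $P$ becomes linear. Thus the positive root $s=s(x,r,w)$ is smooth up to and including $r=0$, and $\rho=\sqrt s$ is smooth since $s>0$, restricting to $1$ on the boundary sphere bundle. I expect the only genuine obstacle to be the blowup-smoothness of the remainder, namely verifying that $r_x(rw',rw'')/r^2$ extends smoothly to $r=0$ so that the reduced equation $P=0$ has smooth coefficients; once this is in place the reduced quadratic is non-degenerate at the boundary and the remaining assertions follow from the implicit function theorem and the explicit computation at $r=0$.
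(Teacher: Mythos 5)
Your proof is correct and follows essentially the same route as the paper's: existence and uniqueness of the positive solution off $T$ via the sign estimates $\chi(v',0)<0$, $\chi(0,v'')>0$ coming from the cubic bound on the remainder, rescaling by $\|v\|^2$ (in your polar coordinates, by $r^2$) to obtain an equation whose coefficients are smooth up to the boundary of the spherical blowup, and the implicit function theorem at the boundary root $\rho=1$, where the derivative in $\rho$ (respectively in $s=\rho^2$) is nonzero. Your explicit quadratic in $s$ is a presentational variant of the paper's application of Lemma \ref{renorm} to $\alpha=\chi_f(\rho v',\rho^{-1}v'')-\chi(v',v'')$ and its $\rho$-derivative, with your by-hand verification that $r_x(rw)/r^2$ extends smoothly playing the role of that lemma.
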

\begin{proof}
It's easy to see that (\ref{chi-eq}) has a unique solution in $(0,\infty)$ when $v=(v',v'')$ does not belong to the zero section $T\subset F$. Indeed, for $v=(v',v'')\ne 0$, the equation $\chi_f(\rho v',\rho^{-1} v'')=c$ has always a unique positive solution except when $v''=0$ and $c>0$ and when $v'=0$ and $c<0$. But conclusion 2. of Proposition \ref{prop}  shows that $\chi(v',0)<0$ when $v'\ne 0$ and $\chi(0,v'')>0$ when $v''\ne 0$.

The obtained map $(V'\times_T V'')\setminus T \to (0,\infty)$  is $S^1$-invariant because $\chi_f$ and $\chi$ are $S^1$-invariant. We have to prove that this function extends to a smooth map $\rho:\widehat{V'\times_F V''} \to (0,\infty)$ satisfying $\resto{\rho}{S(F)}\equiv 1$. Applying Lemma \ref{renorm} below to the functions $\alpha$, $\beta:(0,\infty)\times V'\times_T V''\to \R$ given by 
$$\alpha(\rho,v',v'')=\chi_f(\rho v',\rho^{-1} v'')-\chi(v',v'')\ ,\ $$
$$\beta(\rho,v',v'')=\frac{\partial}{\partial \rho}\alpha(\rho,v',v'')=-\rho^{-1}\left( \rho^2 h'(v', v')+\rho^{-2}h''(v'',v'')\right)$$
we obtain two {\it smooth} function $\hat \alpha$, $\hat \beta: (0,\infty)\times\widehat{V'\times_F V''}\to \R$ extending the functions
$$(\rho,v',v'')\mapsto \frac{1}{\|v\|^2} \alpha(\rho,v',v'')\ ,\  (\rho,v',v'')\mapsto\frac{1}{\|v\|^2} \beta(\rho,v',v'')\ ,$$
and whose restrictions to the boundary $(0,\infty)\times S(F)$ are given by
\begin{equation}\label{first}\hat \alpha(\rho,w',w'')=-\frac{\rho^2-1}{2}\left(  h'(w',w')+\frac{1}{\rho^2}  h''(w'',w'')\right)
\end{equation}
\begin{equation}\label{second}
\hat \beta(\rho,w',w'')=-\rho^{-1}\left( \rho^2 h'(w', w')+\rho^{-2}h''(w'',w'')\right)\ .
\end{equation}

For (\ref{first}) we used (\ref{chi}) and (\ref{Rest}) to compute $\lim_{r\to 0} \frac{1}{r^2}\chi(rw)$. 
Since $\frac{\partial}{\partial\rho} \hat \alpha=\hat \beta$ away of the boundary  $(0,\infty)\times S(F)$, it follows that this equality holds on the whole $(0,\infty)\times\widehat{V'\times_F V''}$. 
The first formula shows that on the boundary $(0,\infty)\times S(F)$ the equation $\hat \alpha(\rho,w',w'')=0$ has a unique positive solution $\rho=1$ and the second formula shows that $\frac{\partial}{\partial\rho} \hat \alpha(1,w',w'')\ne 0$ for any $(w',w'')\in S(F)$. Using the implicit function theorem we see that the equation $\hat \alpha(\rho,w',w'')=0$ defines a smooth positive function on a neighborhood of the boundary $S(F)$ in $\widehat{V'\times_F V''}$.
\end{proof}

\begin{lm}\label{renorm}
Let $F\to X$ be a real vector bundle of finite rank $m$, $W\subset F$ an open neighborhood of the zero section of $F$, and $\hat W$ the spherical blowup of $W$ along the zero section $X\subset F$. Let $\tau:  W\to \R$ be a smooth map whose fiber restrictions $\tau_x:=\resto{\tau}{W_x}$ satisfy the inequality
\begin{equation}\label{estimate} | \tau_x(v)|\leq C_x\|v\|^k \ \forall x\in X\ \forall v\in W_x 
\end{equation}
for a continuous function $C:X\to (0,\infty)$ and a positive integer $k$.  Then   the map 
$W\setminus X=\mathrm{Int}(\hat W)\to \R$
 given by 
$$v\mapsto  \frac{1}{\|v\|^k} \tau (v)$$
extends to a smooth map $\hat\tau:\hat W\to \R$ whose restriction to the boundary $\partial \hat W=S(F)$ is given  by
\begin{equation}\label{r=0}\hat\tau(w)=\lim_{r\to 0} \frac{1}{r^k} \tau(rw)=\frac{1}{k!} D^k_{0_x}\tau_x(\underbrace{w,w\dots w}_{k})\ \forall w\in S(F_x)\ .
\end{equation}
\end{lm}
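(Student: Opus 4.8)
The plan is to reduce the statement to a local computation in the natural polar coordinates supplied by the spherical blowup, and then to extract the smooth extension from Taylor's formula with integral remainder in the radial variable. The function $v\mapsto \|v\|^{-k}\tau(v)$ is intrinsically defined on the interior $W\setminus X=\mathrm{Int}(\hat W)$, so it suffices to work over a trivializing chart $U\subset X$ on which $F$ is identified with $U\times\R^m$ carrying its fixed fibre metric. Over such a chart a collar of $\partial\hat W$ is described by coordinates $(x,r,w)$ with $x\in U$, $r\in[0,\varepsilon)$ and $w\in S^{m-1}$, the blow-down map being $(x,r,w)\mapsto (x,rw)$; the interior corresponds to $r>0$ and the boundary to $r=0$. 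In these coordinates the pullback of $\|v\|^{-k}\tau(v)$ is the function $(x,r,w)\mapsto r^{-k}\,\tau(x,rw)$, defined a priori only for $r>0$, and the whole problem is to show that it extends smoothly across $r=0$ with the stated boundary value.

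First I would record that the growth hypothesis forces the radial jet of $\tau$ along $X$ to vanish up to order $k-1$. Fixing $x$ and $w\in S^{m-1}$ and setting $g(s):=\tau_x(sw)$, the bound $|\tau_x(v)|\le C_x\|v\|^k$ gives $|g(s)|\le C_x|s|^k$; since a nonzero polynomial of degree $\le k-1$ cannot be $O(s^k)$, this yields $g^{(j)}(0)=D^j_{0_x}\tau_x(w,\dots,w)=0$ for all $0\le j\le k-1$. Next I would apply Taylor's formula with integral remainder to $g$ at $0$: because the polynomial part vanishes, one is left with $r^{-k}g(r)=\frac{1}{(k-1)!}\int_0^1(1-t)^{k-1}g^{(k)}(tr)\,dt$. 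Writing $\Phi(x,s,w):=\tau(x,sw)$, which is smooth in $(x,s,w)$ as a composite of smooth maps, this reads $r^{-k}\tau(x,rw)=\frac{1}{(k-1)!}\int_0^1(1-t)^{k-1}(\partial_s^k\Phi)(x,tr,w)\,dt$. The right-hand side is manifestly defined and smooth in $(x,r,w)$ for $r\in[0,\varepsilon)$: the integrand and all its $(x,r,w)$-derivatives are continuous on the compact $t$-range, so differentiation under the integral sign applies. Setting $r=0$ and using $\int_0^1(1-t)^{k-1}\,dt=\frac1k$ gives the boundary value $\frac{1}{k!}\partial_s^k\Phi(x,0,w)=\frac{1}{k!}D^k_{0_x}\tau_x(w,\dots,w)$, exactly the claimed formula.

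Finally I would globalize. The local extensions produced above all restrict, on $r>0$, to the single globally defined interior function $\|v\|^{-k}\tau(v)$; since the interior is dense, they agree on overlaps and patch to one smooth function $\hat\tau:\hat W\to\R$, and the boundary formula is intrinsic because $D^k_{0_x}\tau_x$ is the genuine $k$-th fibre derivative and $w\in S(F_x)$ is coordinate-free. I expect the only real obstacle to be the verification of smoothness up to and including the boundary, together with all mixed base, radial and angular derivatives; this is precisely what the integral-remainder representation settles at one stroke, turning a potentially delicate limit argument into an elementary differentiation-under-the-integral.
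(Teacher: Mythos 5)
Your proof is correct and follows essentially the same route as the paper: both arguments observe that the estimate forces the order-$(k-1)$ fibrewise Taylor polynomial at $0_x$ to vanish and then invoke Taylor's formula with integral remainder in the radial variable, reading off smoothness in $(x,r,w)$ and the boundary value $\frac{1}{k!}D^k_{0_x}\tau_x(w,\dots,w)$. The only (cosmetic) difference is that you expand to order $k-1$ and package everything into a single integral, while the paper expands to order $N>k$ and keeps the intermediate polynomial terms explicit.
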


\begin{proof} Choose $N>k$. The order $N$-Taylor expansion at $0_x$ of the fiber restriction $\tau_x$   reads

$$\tau_x(v)=\sum_{0\leq l\leq N} \frac{1}{l!} D^l_{0_x}\tau_x(\underbrace{v \dots v}_{l}) +    \frac{1}{N!} \int_0^1 (1-t)^N D^{N+1}_{tv}\tau_x(\underbrace{v \dots v}_{N+1})dt\ .
$$
The assumption (\ref{estimate}) implies
$$D^l_{0_x}\tau_x=0\ \forall x\in X\ \forall l\in\{0,\dots,k-1\}\ .
$$
Putting $v=rw$ with $\|w\|=1$ we get for $r>0$
$$\frac{1}{\|v\|^k} \tau (v)=\sum_{k\leq l\leq N} \frac{1}{l!} D^l_{0_x}\tau_x(\underbrace{w \dots w}_{l})  r^{l-k}+   \frac{r^{N-k}}{N!} \int_0^1 (1-t)^N D^{N+1}_{trw}\tau_x(\underbrace{w \dots w}_{N+1})dt\ ,
$$
which can obviously be smoothly extended to a smooth function $\hat\tau$ on spherical blow up $\hat W$ whose restriction to the boundary $\partial \hat W=S(F)$ is given by (\ref{r=0}). \end{proof}

\begin{re}\label{MetOnT}
Proposition \ref{prop} shows that, ``around its boundary", the blowup flip passage $\hat Q_f$ depends only on 
\begin{enumerate}
\item the trivialization of the normal line bundle $N_T$ induced by $d f$,
\item the restrictions $\hg'$, $\hg''$ of the metrics $h'$, $h''$  on $F':=\resto{E'}{T}$, $F'':=\resto{E''}{T}$.
\end{enumerate}  
\end{re}

 \section{The holomorphic model theorem}
 
 \subsection{The blowup flip passage associated with a circle of regular reductions}\label{a-flip-passage}
 
 Let $(X,g)$ be a Gauduchon surface with $p_g(X)=0$ and $b_1(X)=1$, and let $D$, $L$ be Hermitian line bundles on $X$. We fix Hermite-Einstein connections $a\in {\cal A}(D)$, $b_0\in{\cal A}(L)$   such that
$$\int_X i\Lambda_g F_{b_0} =\frac{1}{2}\int_X i\Lambda_g F_{a}\ ,
$$
and we denote by $\delta:=\bar\partial_a$, $\sigma_0:=\bar\partial_{b_0}$ the corresponding integrable semiconnections.

Put $c:=c_1(L)$. In section \ref{GaugePic} we identified the component  $\Pic^c(X)$ of the Picard group of $X$ with the moduli space ${\cal M}(L)$, which has a very simple description as a finite dimensional quotient:
 $${\cal M}(L)=\qmod{\Sigma}{G_{x_0}}\ ,
 $$
for a base point $x_0\in X$. This identification is given explicitly by $[\sigma]\mapsto [{\cal L}_\sigma]$. In our case  $\Sigma$ is an affine complex line, and $G_{x_0}$ is a cyclic group canonically isomorphic to $2\pi H^1(X,i\Z)$.  

For $\sigma\in\Sigma$ we denote by $b_\sigma$ the Chern connection of the  Hermitian line bundle  ${\cal L}_\sigma$, which will also be   Hermite-Einstein (see Remark \ref{FinDImKH} in the Appendix).   
Put
 $$\check{L}:=L^\smvee\otimes D \ , \ L':=\check{L}^\smvee\otimes L\ ,\ L'':=L^\smvee\otimes \check{L}\ .
$$
Similarly, for a connection $b$, a semiconnection $\sigma$ and a holomorphic structure ${\cal L}$ on $L$ we put
$$\check{b}:=b^\vee\otimes a\ ,\ b':=\check{b}^\smvee\otimes b\ ,\ b'':= b^\vee\otimes\check{b}\ ,\ \check{\sigma}:=\sigma^\smvee \otimes \delta\ ,\ \sigma':=\check\sigma^\smvee\otimes\sigma \ ,\ \sigma'':=\sigma^\smvee\otimes\check\sigma\ ,$$
$$\check{{\cal L}}:={\cal L}^\vee\otimes {\cal D}\ ,\ {\cal L}':=\check{\cal L}^\vee\otimes {\cal L}\ ,\ {\cal L}'':={\cal L}^\vee\otimes \check{{\cal L}}\ .
$$
Consider the circle 
$$T:=\bigg\{[{\cal L}]\in\Pic^c(X)|\ \deg_g({\cal L})
=\frac{1}{2}\deg_g({\cal D}) \bigg\}\ .
$$
Our identification  ${\cal M}(L)=\Pic^c(X)$  restricts to an identification 
$$\qmod{\Sigma_0}{G_{x_0}}\textmap{\simeq} T\ ,$$ 
 where $\Sigma_0:=\sigma_0+ H^{0,1}_{\rm cl}$. For $\sigma\in \Sigma_0$ put
$$v'_\sigma :=\Lambda_g\partial_{b'_\sigma}: A^{0,1}(L')\to A^0(L')\ ,\  v''_\sigma =\Lambda_g\partial_{b''_\sigma}: A^{0,1}(L'')\to A^0(L'')$$
$$\Hg'_\sigma:=\ker(\sigma': A^{0,1}(L')\to A^{0,2}(L'))\cap \ker(v'_\sigma)\ , $$  $$\Hg''_\sigma:=\ker(\sigma'': A^{0,1}(L'')\to A^{0,2}(L''))\cap \ker(v''_\sigma)\ .
$$
 Using Proposition \ref{H1} proved in the Appendix we obtain
\begin{lm} \label{H1iso} Suppose $2c\ne c_1(D)$ and let $\sigma\in \sigma_0+ H^{0,1}_{\rm cl}$. Then
\begin{enumerate}[1.]
\item $h^0({\cal L}'_\sigma)=h^0({\cal L}''_\sigma)=0$,
\item The natural morphisms 
$\Hg'_\sigma\to H^1({\cal L}'_\sigma)$, $\Hg''_\sigma\to H^1({\cal L}''_\sigma)$
 are isomorphisms.
\end{enumerate}
\end{lm}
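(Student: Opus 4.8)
The plan is to treat the two statements by separate mechanisms: part~1 by the positivity of the Gauduchon degree on effective divisors, and part~2 by the harmonic description of $H^1$ packaged in Proposition~\ref{H1}. Both rest on the same numerical input, which I would record first. From $\check{\mathcal L}={\mathcal L}^\vee\otimes{\mathcal D}$, ${\mathcal L}'=\check{\mathcal L}^\vee\otimes{\mathcal L}$ and ${\mathcal L}''={\mathcal L}^\vee\otimes\check{\mathcal L}$ one gets $c_1({\mathcal L}'_\sigma)=2c-c_1(D)$ and $c_1({\mathcal L}''_\sigma)=-(2c-c_1(D))$, so the hypothesis $2c\ne c_1(D)$ is precisely the statement that both classes are nonzero in $H^2(X,\Z)$. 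Since $\sigma\in\Sigma_0=\sigma_0+H^{0,1}_{\rm cl}$, the bundle ${\mathcal L}_\sigma$ lies on the circle $T$, i.e. $\deg_g({\mathcal L}_\sigma)=\tfrac12\deg_g({\mathcal D})$ (this is exactly the normalization $\int_X i\Lambda_g F_{b_0}=\tfrac12\int_X i\Lambda_g F_a$ imposed on $b_0$, preserved along $\Sigma_0$). A one-line computation then yields $\deg_g(\check{\mathcal L}_\sigma)=\tfrac12\deg_g({\mathcal D})$ and hence $\deg_g({\mathcal L}'_\sigma)=\deg_g({\mathcal L}''_\sigma)=0$.

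For part~1 I would argue by contradiction. A nonzero section $s\in H^0({\mathcal L}'_\sigma)$ would give ${\mathcal L}'_\sigma\cong{\mathcal O}_X(Z(s))$ with $Z(s)$ an effective divisor, whence $\deg_g({\mathcal L}'_\sigma)\geq 0$, with equality if and only if $Z(s)=0$, by positivity of the Gauduchon degree on effective divisors. As $\deg_g({\mathcal L}'_\sigma)=0$, this forces $Z(s)=0$, so ${\mathcal L}'_\sigma\cong{\mathcal O}_X$ and $c_1({\mathcal L}'_\sigma)=0$, contradicting the computation above. Hence $h^0({\mathcal L}'_\sigma)=0$, and the identical argument with $c_1({\mathcal L}''_\sigma)\ne 0$, $\deg_g({\mathcal L}''_\sigma)=0$ gives $h^0({\mathcal L}''_\sigma)=0$.

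For part~2 the content is Hodge-theoretic and is supplied by Proposition~\ref{H1}. The key structural point is that $\Hg'_\sigma=\ker(\sigma')\cap\ker(v'_\sigma)$ should be identified with the space of harmonic $(0,1)$-forms for the Chern connection $b'_\sigma$: on a $(0,1)$-form $\alpha$ one has $\Lambda_g\alpha=0$, so $v'_\sigma\alpha=\Lambda_g\partial_{b'_\sigma}\alpha=[\Lambda_g,\partial_{b'_\sigma}]\alpha$, which is the coclosedness condition $\sigma'^*\alpha=0$ up to a Gauduchon torsion correction. Granting the identification $\Hg'_\sigma=\ker(\sigma')\cap\ker(\sigma'^*)$, surjectivity of $\Hg'_\sigma\to H^1({\mathcal L}'_\sigma)$ is the Dolbeault Hodge decomposition (the Laplacian $\sigma'\sigma'^*+\sigma'^*\sigma'$ is elliptic, so every class has a harmonic representative), and injectivity follows at once: if $\alpha\in\Hg'_\sigma$ is exact, $\alpha=\sigma'\phi$, then $\|\alpha\|^2=\langle\alpha,\sigma'\phi\rangle=\langle\sigma'^*\alpha,\phi\rangle=0$. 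The double-primed case is identical after exchanging the roles of ${\mathcal L}$ and $\check{\mathcal L}$.

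The main obstacle is exactly this last identification on a \emph{non-Kähler} Gauduchon surface: the Kähler identity $\sigma'^*=i\Lambda_g\partial_{b'_\sigma}$ holds only modulo torsion terms, so verifying that the constraint $v'_\sigma\alpha=0$ really cuts out the harmonic space (rather than merely a form-level approximation of it) requires the careful analysis carried out in Proposition~\ref{H1}. By contrast, the role of the present lemma is the comparatively routine task of checking that the hypotheses of that proposition hold in our situation — namely the vanishing $h^0({\mathcal L}'_\sigma)=h^0({\mathcal L}''_\sigma)=0$ from part~1 and the degree-zero condition localizing everything on the circle $T$.
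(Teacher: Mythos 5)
Your proposal is correct, and part 2 is handled exactly as in the paper: one checks that the Einstein constants of $b'_\sigma$, $b''_\sigma$ vanish (degree zero on the circle $T$) and that $\ker(d_{b'_\sigma})=\ker(d_{b''_\sigma})=0$ (which, by part 1 of Proposition \ref{H1}, is the vanishing of $h^0$), and then the isomorphism $\Hg'_\sigma\to H^1({\cal L}'_\sigma)$ is literally the second conclusion of Proposition \ref{H1}. Your side remarks about $v'_\sigma$ being the formal adjoint ``up to torsion'' are not needed and are slightly misleading -- the point of Proposition \ref{H1} is not that $\ker(v'_\sigma)$ is the coexact complement in the Hodge-theoretic sense, but that it is \emph{a} topological complement of $\bar\partial_{b'_\sigma}(A^0)$; since you ultimately defer to that proposition, no harm is done.

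Where you genuinely diverge is part 1. The paper stays inside the same harmonic-theoretic package: by Proposition \ref{H1}, the vanishing of the Einstein constant forces every holomorphic section of ${\cal L}'_\sigma$ to be $b'_\sigma$-parallel (a maximum-principle argument for $i\Lambda_g\bar\partial\partial$), and a nontrivial parallel section of a Hermitian line bundle is nowhere zero, hence would give a bundle isomorphism ${\cal L}^{\otimes 2}\simeq{\cal D}$, contradicting $2c\ne c_1(D)$. You instead use the Poincar\'e--Lelong/Gauduchon fact that $\deg_g({\cal O}_X(D))=\mathrm{vol}_g(D)\ge 0$ for an effective divisor $D$, with equality iff $D=0$; combined with $\deg_g({\cal L}'_\sigma)=0$ and $c_1({\cal L}'_\sigma)\ne 0$ this also kills all sections. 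Both arguments are valid and rest on the same numerical inputs (degree zero, nonzero Chern class). The paper's route is more economical here because Proposition \ref{H1} must be invoked for part 2 anyway, so part 1 comes for free from its first conclusion; your route imports an extra (standard, but not restated in this paper) fact about Gauduchon degrees of effective divisors, in exchange for not needing the parallel-section observation.
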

\begin{proof}
Since $\deg_g({\cal L}'_\sigma)=\deg({\cal L}''_\sigma)=0$,  the Einstein constants  of $b'_\sigma$, $b''_\sigma$  vanish. Therefore Proposition \ref{H1} applies, and shows that any holomorphic section of ${\cal L}'_\sigma$ (${\cal L}''_\sigma$) is $b'_\sigma$-parallel ($b''_\sigma$-parallel). But a non-trivial parallel section of ${\cal L}'_\sigma$ (${\cal L}''_\sigma$) would define a bundle isomorphism ${\cal L}^{\otimes 2}\to {\cal D}$, which contradicts the hypothesis. The second statement follows directly from Proposition \ref{H1}. \end{proof}

Let now $\mathscr{L}=\mathscr{L}_{x_0}$ the Poincaré line bundle (associated with the base point $x_0$) on ${\cal M}(L)\times X$ and denote by $p_1: {\cal M}(L)\times X\to {\cal M}(L)$, $p_2: {\cal M}(L)\times X\to X$ the two projections. Consider the coherent sheaves
$${\cal H}':=R^1(p_1)^*(\mathscr{L}^{\otimes 2}\otimes p_2^*({\cal D}^\vee))\ ,\ {\cal H}'':=R^1(p_1)^*(\mathscr{L}^{\otimes -2}\otimes p_2^*({\cal D}))
$$
on ${\cal M}(L)$.
\begin{dt}\label{regular}
A pair $(L,{\cal D})$  as above  with $2c_1(L)\ne c_1({\cal D})$will be called regular  if $h^2({\cal L}')=h^2({\cal L}'')=0$ for any $[{\cal L}]\in T$.
\end{dt}

The regularity of the pair $(L,{\cal D})$   is equivalent to the condition ``${\cal T}_\lambda$ is a circle of regular reductions" mentioned in the introduction (see Corollary 1.21 in \cite{Te3}).
Using Lemma \ref{H1iso}, the Riemann-Roch theorem and Grauert's semicontinuity,  local triviality and base change theorems, we obtain
\begin{pr}
Let $(L,{\cal D})$ be a regular pair. For any sufficiently small $\varepsilon>0$ the restrictions of ${\cal H}'$, ${\cal H}''$ to the annulus
$${\cal M}(L)_\varepsilon:=\left\{[{\cal L}]\in \Pic^c(X)|\ \pi\big|\deg_g({\cal L})
=\frac{1}{2}\deg_g({\cal D})\big|<\varepsilon\right\}
$$
are locally free of ranks
$$r'=-\frac{1}{2} (2c -c_1(E))(2c -c_1(E)+c_1(X))\ ,\ r''=-\frac{1}{2} (-2c +c_1(E))(-2c +c_1(E)+c_1(X))
$$
respectively, and for any $l\in {\cal M}(L)_\varepsilon$ one has canonical identifications 
$${\cal H}'(l)=H^1(\mathscr{L}'_l) \ ,\ {\cal H}''(l)=H^1(\mathscr{L}'')\ .$$
\end{pr}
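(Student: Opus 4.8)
The plan is to combine the local vanishing/isomorphism result of Lemma \ref{H1iso} with the standard cohomology-and-base-change machinery for direct image sheaves. The statement asserts three things: local freeness of $\resto{{\cal H}'}{{\cal M}(L)_\varepsilon}$ and $\resto{{\cal H}''}{{\cal M}(L)_\varepsilon}$, the computation of their ranks, and the canonical fiberwise identification ${\cal H}'(l)=H^1(\mathscr{L}'_l)$. First I would establish the key vanishing $h^2({\cal L}'_{\cal L})=h^2({\cal L}''_{\cal L})=0$ and $h^0({\cal L}'_{\cal L})=h^0({\cal L}''_{\cal L})=0$ on a full neighborhood of the circle $T$. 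The $h^0$-vanishing is exactly conclusion 1 of Lemma \ref{H1iso} on $T$, and the $h^2$-vanishing on $T$ is the hypothesis that $(L,{\cal D})$ is regular (Definition \ref{regular}). By Grauert's upper-semicontinuity theorem applied to the family $\mathscr{L}^{\otimes 2}\otimes p_2^*({\cal D}^\vee)$ over ${\cal M}(L)$, the loci $\{h^0>0\}$ and $\{h^2>0\}$ are closed; since both miss the compact circle $T$, there is an $\varepsilon>0$ so that both vanish on the entire annulus ${\cal M}(L)_\varepsilon$, and similarly for the primed-primed bundle.

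Once $h^0\equiv h^2\equiv 0$ on ${\cal M}(L)_\varepsilon$, the Euler characteristic $\chi({\cal L}'_{\cal L})=-h^1({\cal L}'_{\cal L})$ is constant in ${\cal L}$ (it is a topological invariant), so $h^1$ is constant on the annulus. Constancy of $h^1$ together with the base-change theorem is precisely the hypothesis needed for Grauert's local freeness and base-change statement: $R^1(p_1)_*$ of the relevant sheaf is locally free on ${\cal M}(L)_\varepsilon$ and the natural base-change map ${\cal H}'(l)\to H^1(\mathscr{L}'_l)$ is an isomorphism for every $l\in{\cal M}(L)_\varepsilon$. (Here one uses the standard fact that if both $R^{i}$ and $R^{i+1}$ behave well — which follows from $h^2\equiv 0$ making $R^2$ vanish and $R^1$ being the top nonzero direct image — then $R^1$ is locally free and commutes with base change.) This simultaneously yields the local freeness and the canonical fiberwise identification.

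Finally, I would pin down the ranks by Riemann-Roch on the surface $X$. Since $h^0=h^2=0$, we have $r'=h^1({\cal L}'_{\cal L})=-\chi({\cal L}'_{\cal L})$, and $\chi$ is computed by the Hirzebruch-Riemann-Roch formula
$$\chi({\cal L}'_{\cal L})=\frac{1}{2}c_1({\cal L}')\big(c_1({\cal L}')-c_1(X)\big)+\chi({\cal O}_X).$$
Using $c_1({\cal L}')=c_1(\check{\cal L}^\vee\otimes{\cal L})=2c-c_1({\cal D})=2c-c_1(E)$ (recalling $D=\det(E)$), substituting $\chi({\cal O}_X)=1-q+p_g=0$ under the hypotheses $p_g=0$, $b_1=1$ (so $q=1$), and simplifying, gives exactly the claimed value of $r'$; the computation for $r''$ is identical with $c_1({\cal L}'')=c_1(E)-2c$.

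I expect the main obstacle to be the bookkeeping required to invoke Grauert's base-change theorem cleanly in the \emph{non-projective, non-Kählerian} setting: the surface $X$ is only Gauduchon, so one cannot simply cite projective base-change, and one must be careful that the Poincaré family $\mathscr{L}$ and the coherent direct images $R^i(p_1)_*$ are set up so that the analytic semicontinuity and base-change theorems (in the form valid for proper holomorphic maps, e.g. Grauert-Remmert / Banica-Stanasila) genuinely apply. The vanishing-at-a-point inputs and the Riemann-Roch rank computation are essentially routine once this framework is in place; the delicate point is confirming that constancy of $h^1$ on the annulus — guaranteed by the simultaneous vanishing of $h^0$ and $h^2$ — suffices to conclude both local freeness and the fiberwise isomorphism, which is exactly the hypothesis of the analytic base-change theorem in its strongest form.
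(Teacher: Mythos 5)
Your argument is essentially the paper's own: the paper's entire proof is the one-line citation ``Using Lemma \ref{H1iso}, the Riemann-Roch theorem and Grauert's semicontinuity, local triviality and base change theorems, we obtain [the proposition]'', and your proposal fills in precisely those steps in the intended order (vanishing of $h^0$ and $h^2$ on $T$ from Lemma \ref{H1iso} and regularity, propagation to an annulus by semicontinuity and compactness of $T$, constancy of $h^1$ via the Euler characteristic, then Grauert's local triviality and base change, then Riemann--Roch for the ranks). The only slip is the sign in your Hirzebruch--Riemann--Roch formula: since $K_X=-c_1(X)$ it should read $\chi({\cal L}')=\chi({\cal O}_X)+\frac{1}{2}c_1({\cal L}')\bigl(c_1({\cal L}')+c_1(X)\bigr)$, which is what actually yields the stated values of $r'$ and $r''$.
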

\begin{re} Since the annulus ${\cal M}(L)_\varepsilon$ is  
Stein and homotopically equivalent to a circle, it follows that the bundles $\resto{{\cal H}'}{{\cal M}(L)_\varepsilon}$, $\resto{{\cal H}''}{{\cal M}(L)_\varepsilon}$ are in fact trivial \cite{Gr}.
\end{re}
 In section \ref{GaugePic} we showed that the Poincaré line bundle $\mathscr{L}$ comes with a canonical Hermitian metric  and is fiberwise  Hermite-Einstein in the $X$-directions.  Therefore $\mathscr{L}_l'$, $\mathscr{L}_l''$ become Hermitian line bundles. Using  the isomorphisms given by the second conclusion of Lemma \ref{H1iso} and the $L^2$-inner product on the spaces $\Hg'_\sigma$, $\Hg''_\sigma$ we get Hermitian metrics $\hg'$, $\hg''$ on the bundles $\resto{{\cal H}'}{T}$, $\resto{{\cal H}''}{T}$. 

Define $f_{\cal D}:{\cal M}(L)\to \R$ by $f_{\cal D}([\sigma]):=2\pi(\deg({\cal L}_\sigma)-\frac{1}{2}\deg({\cal D}))$. According to Remark \ref{MetOnT} the system
$$\left({\cal M}(L)_\varepsilon,\resto{{\cal H}'}{{\cal M}(L)_\varepsilon}, \resto{{\cal H}''}{{\cal M}(L)_\varepsilon}, \hg', \hg'', \resto{f_{\cal D}}{{\cal M}(L)_\varepsilon}\right)
$$
can be used to define a  blowup  flip passage (around its boundary) in a coherent way. We will denote by $\hat Q$ this blowup flip passage. Recall that its interior comes with a complex structure, and its boundary can be identified with  the projective bundle $\P(\resto{{\cal H}'}{T}
\oplus  \overline{\resto{{\cal H}''}{T}} )$ over $T$.

 \subsection{The moduli space ${\cal N}$}\label{interm}
 
The family of operators
\begin{equation}\label{D1} A^0(L')\textmap{\sigma'}  A^{0,1}(L')\ ,\ A^0(L'')\textmap{\sigma''}  A^{0,1}(L'')
\end{equation}
\begin{equation}\label{D2} A^{0,1}(L')\textmap{\sigma'} A^{0,2}(L')\ ,\ A^{0,1}(L'')\textmap{\sigma''} A^{0,2}(L'')
\end{equation}
associated with points $\sigma\in\Sigma$ are $G^\C$-equivariant, if we let the group $G^\C$ act on $\Sigma$ by %
$$\varphi\cdot \sigma:=\varphi\circ\sigma\circ \varphi^{-1}=\sigma-\varphi^{-1}\bar\partial \varphi$$
 and on the spaces $A^{0,q}(L')$,  $A^{0,q}(L'')$ by 
$$\varphi\cdot \alpha'=\varphi^2 \alpha'\ ,\ \varphi\cdot \alpha''=\varphi^{-2} \alpha'\ .
$$
Let $\Sigma_\varepsilon$ the preimage of the annulus ${\cal M}(L)_\varepsilon$  under the   quotient map $\Sigma\to {\cal M}(L)$. Therefore $\Sigma_\varepsilon$ is a symmetric neighborhood of the real line $\Sigma_0$ in $\Sigma$.  
Suppose now that the pair $(L,{\cal D})$ is regular in the sense of Definition \ref{regular}.  By Lemma \ref{H1iso} and Grauert's semicontinuity theorem it follows for sufficiently small $\varepsilon>0$ the following holds:  for any $\sigma\in \Sigma_\varepsilon$ the two operators if  (\ref{D1}) are injective, and the two operators in (\ref{D2}) are surjective. From now on we suppose that $\varepsilon$ is sufficiently small such that these properties hold on $\Sigma_\varepsilon$.  We will need two  {\it holomorphic}, $G^\C$-equivariant families  of operators $v'_\sigma$, $v''_\sigma$ defined for every $\sigma\in \Sigma_\varepsilon$ such that  $\ker(v'_\sigma)$, $\ker(v''_\sigma)$  are complements of the images of the two operators in (\ref{D1}) for every $\sigma\in \Sigma_\varepsilon$. The families $\sigma\mapsto \Lambda_g\partial_{b'_\sigma}$, $\sigma\mapsto \Lambda_g\partial_{b''_\sigma}$ are $G$-equivariant, but unfortunately they are not holomorphic. They are antiholomorphic. Since we are interested in holomorphic models, this complicates our arguments.
\begin{pr} \label{adjoints} For sufficiently small $\varepsilon>0$ there exists  $G^\C$-equivariant families of operators
$$v'_\sigma:A^{0,1}(L')\to A^0(L')\ ,\  v''_\sigma: A^{0,1}(L'')\to A^0(L'') \ , 
$$
$$w'_\sigma: A^{0,2}(L')\to A^{0,1}(L')\ ,\ w''_\sigma: A^{0,2}(L'')\to A^{0,1}(L'')  
$$
depending holomorphically on $\sigma\in\Sigma_\varepsilon$ such that for any  $\sigma\in\Sigma_\varepsilon$ it holds
\begin{enumerate} \item \label{compl} $\ker(v'_\sigma)$, $\ker(v''_\sigma)$ are topological complements of the images of the two operators of $(\ref{D1})$, 
\item \label{iden1} $v'_\sigma\circ w'_\sigma=0$, $v''_\sigma\circ w''_\sigma=0$,
 \item \label{iden2} $w'_\sigma$, $w''_\sigma$ are right inverses of the two operators in (\ref{D2}).
 
\end{enumerate}
In particular, $\im(w'_\sigma)$, $\im(w''_\sigma)$ are topological complements of 
$$\Hg'_\sigma:=\ker(\sigma': A^{0,1}(L')\to A^{0,2}(L'))\cap \ker(v'_\sigma)\ , $$  $$\Hg''_\sigma:=\ker(\sigma'': A^{0,1}(L'')\to A^{0,2}(L''))\cap \ker(v''_\sigma)
$$
in $\ker(v'_\sigma)$, $\ker(v''_\sigma)$ respectively.

\end{pr}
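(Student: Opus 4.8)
The plan is to reduce the Proposition to the existence of two holomorphic, $G^\C$-equivariant families of one-sided inverses, and then to obtain $w'_\sigma$, $w''_\sigma$ from them by a purely formal manipulation. Suppose we have produced holomorphic $G^\C$-equivariant families $v'_\sigma:A^{0,1}(L')\to A^0(L')$ that are left inverses of the injective operators $\sigma':A^0(L')\to A^{0,1}(L')$ of (\ref{D1}), together with holomorphic $G^\C$-equivariant right inverses $R_\sigma:A^{0,2}(L')\to A^{0,1}(L')$ of the surjective operators $\sigma':A^{0,1}(L')\to A^{0,2}(L')$ of (\ref{D2}) (and their double-primed analogues). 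I would then set
$$w'_\sigma:=(\id-\sigma' v'_\sigma)R_\sigma\ ,$$
where $\sigma' v'_\sigma$ is formed with the $(\ref{D1})$-operator. Since $v'_\sigma\sigma'=\id$, the composite $P_\sigma:=\sigma' v'_\sigma$ is the projector onto $\im(\sigma':A^0(L')\to A^{0,1}(L'))$ with kernel $\ker(v'_\sigma)$, which is conclusion \ref{compl}. Using the complex identity $\sigma'\circ\sigma'=0$ one checks at once that $\sigma' w'_\sigma=\sigma' R_\sigma=\id$ (conclusion \ref{iden2}) and $v'_\sigma w'_\sigma=v'_\sigma R_\sigma-(v'_\sigma\sigma')v'_\sigma R_\sigma=0$ (conclusion \ref{iden1}). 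The final decomposition $\ker(v'_\sigma)=\Hg'_\sigma\oplus\im(w'_\sigma)$ is then elementary: $\im(w'_\sigma)\subset\ker(v'_\sigma)$ meets $\ker(\sigma':A^{0,1}\to A^{0,2})$ only in $0$, and for any $x\in\ker(v'_\sigma)$ one has $x-w'_\sigma\sigma' x\in\Hg'_\sigma$.

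It remains to produce the families $v'_\sigma$ and $R_\sigma$. The assignments $\sigma\mapsto\sigma'$ and $\sigma\mapsto\sigma''$ are \emph{affine}, hence holomorphic: writing $\sigma=\sigma_0+a$ gives $\sigma'=\sigma'_0+2a$ on $L'={\cal L}^{\otimes2}\otimes{\cal D}^\vee$ and $\sigma''=\sigma''_0-2a$ on $L''$. After suitable Sobolev completions these are holomorphic families of Fredholm operators, injective resp. surjective on $\Sigma_\varepsilon$ by the discussion preceding the Proposition (and elliptic regularity returns the operators on the smooth spaces). Fixing a reference $\sigma_0$ and a single left inverse $L_0$ of $\sigma'_0$, the family $L_0\sigma'$ is invertible for $\sigma$ near $\sigma_0$, so $v_\sigma:=(L_0\sigma')^{-1}L_0$ is a holomorphic family of left inverses; the symmetric Neumann-series construction $R_\sigma:=R_0(E_\sigma R_0)^{-1}$ (with $E_\sigma$ the $(\ref{D2})$-operator and $R_0$ a right inverse of $E_{\sigma_0}$) produces the right inverses. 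This gives everything \emph{locally} on $\Sigma_\varepsilon$.

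The main obstacle is to render these families simultaneously holomorphic \emph{and} $G^\C$-equivariant on all of $\Sigma_\varepsilon$. This is exactly the difficulty flagged before the statement: the canonical complements come from the adjoint operators $\Lambda_g\partial_{b'_\sigma}$, which are $G$-equivariant but merely antiholomorphic, and since $G^\C$ is non-compact one cannot average a local holomorphic choice over the group. My resolution is to pass to the quotient. Because $v'_\sigma$ and $w'_\sigma$ map the spaces $A^{0,q}(L')$ (all of $\C^*$-weight $2$) into one another, they carry weight $0$, so the whole problem descends to the construction of holomorphic bundle maps between the twisted Hilbert bundles with fibres $A^{0,q}(L')$ over the base ${\cal M}(L)_\varepsilon=\Sigma_\varepsilon/G^\C$. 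The local existence established above, combined with the fact that the obstruction to globalizing a holomorphic one-sided inverse is an $H^1$-class with coefficients in a $\Hom$-bundle over the \emph{Stein} annulus ${\cal M}(L)_\varepsilon$ (where, by the Remark above, the relevant bundles are even trivial), forces this obstruction to vanish by Cartan's Theorem B and its holomorphic Banach-bundle analogue. Pulling the resulting global holomorphic inverses back to $\Sigma_\varepsilon$ yields the required $G^\C$-equivariant families, and the double-primed case follows verbatim after exchanging the weights $2\leftrightarrow-2$.
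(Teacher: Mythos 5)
Your argument is essentially sound as a proof of the Proposition as literally stated, but it takes a genuinely different route from the paper's. The paper's proof is a one\nobreakdash-step complexification: after suitable Sobolev completions, the canonical families $\sigma\mapsto\Lambda_g\partial_{b'_\sigma}$, $\sigma\mapsto\Lambda_g\partial_{b''_\sigma}$ and the right inverses of the operators in (\ref{D2}) taking values in the $L^2$-orthogonal complements of $\Hg'_\sigma$, $\Hg''_\sigma$ are real-analytic on the totally real slice $\Sigma_0$ with values in a complex Banach space of bounded operators, hence extend holomorphically to a neighborhood $\Sigma_\varepsilon$ of $\Sigma_0$ in the affine complex line $\Sigma$ (uniformly, by compactness of $T=\Sigma_0/G_{x_0}$); the identity theorem for holomorphic maps then propagates the $G^\C$-equivariance and the algebraic identities of conclusions 2 and 3 from $\Sigma_0$ into $\Sigma_\varepsilon$, and conclusion 1, being an open condition, survives on a smaller $\Sigma_\varepsilon$. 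Your route --- the formal reduction to one-sided inverses (the algebra around $w'_\sigma=(\id-\sigma'v'_\sigma)R_\sigma$ is correct), local Neumann-series families, descent to the holomorphic Banach bundles over the Stein annulus ${\cal M}(L)_\varepsilon$, and globalization by vanishing of $H^1$ --- is workable, but it leans on substantially heavier machinery: you need the Banach-valued Cartan Theorem B / Oka principle (Bungart), and you must check that the sheaf $\{u\mid u\circ\sigma'=0\}$ is the sheaf of sections of a holomorphic Banach \emph{subbundle} (it is, because $\im(\sigma')$ is holomorphically complemented, but this should be said). The more substantive difference is a normalization your construction does not provide: the paper's $v'_\sigma$, $w'_\sigma$ restrict on $\Sigma_0$ to $\Lambda_g\partial_{b'_\sigma}$ and to the $L^2$-orthogonally normalized right inverse, so that the spaces $\Hg'_\sigma$, $\Hg''_\sigma$ of the Proposition coincide, for $\sigma\in\Sigma_0$, with the harmonic spaces of section \ref{a-flip-passage} used to define the metrics $\hg'$, $\hg''$ and appearing in the Hessian computation of Lemma \ref{P1P2P3}. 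Your globalized families need not agree with the canonical ones on $\Sigma_0$, so while they satisfy the statement verbatim, they would force a renormalization downstream; the cheapest repair is to seed your globalization with the holomorphic extension of the canonical real-analytic family on $\Sigma_0$ --- which is exactly the paper's proof.
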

\begin{proof} Taking suitable Sobolev completions, the maps 
$$\Sigma_0\ni \sigma\mapsto v'_\sigma:=\Lambda_g\partial_{b'_\sigma}\ ,\ \Sigma_0\ni \sigma\mapsto v''_\sigma:=\Lambda_g\partial_{b''_\sigma}$$
 become real analytic, $G^\C$-equivariant and take values in a complex Banach space of bounded operators. Therefore these maps extend  holomorphically on   $\Sigma_\varepsilon$ for sufficiently small $\varepsilon>0$, and, by the identity theorem for holomorphic applications,  the extension will still be $G^\C$-equivariant. Since $T=\Sigma_0/G_{x_0}$ is compact,  condition (\ref{compl}) (which is is open with respect to $\sigma$) will hold on a sufficiently small $\Sigma_\varepsilon$.   Similarly, for $\sigma\in \Sigma_0$ let 
$$w'_\sigma: A^{0,2}(L')\to A^{0,1}(L')\ ,\ w''_\sigma: A^{0,2}(L'')\to A^{0,1}(L'')$$
 be the right inverses of the two operators in (\ref{D2}) which take values in the $L^2$-orthogonal complements of $\Hg'_\sigma$ in $K'_\sigma:=\ker(v'_\sigma)$, and $\Hg''_\sigma$ in  $K''_\sigma:=\ker(v''_\sigma)$ respectively. These maps are again real analytic on  $\Sigma_0$ hence, for sufficiently small $\varepsilon>0$, they admit  holomorphic extensions $\Sigma_\varepsilon\ni \sigma \mapsto w'_\sigma$, $\Sigma_\varepsilon\ni \sigma \mapsto w''_\sigma$. The equivariance properties and  the claims (\ref{iden1}), (\ref{iden2}) follow using again the identity theorem for holomorphic maps.
 \end{proof}
 
 Note that, for $\sigma\not\in\Sigma_0$  we cannot expect the operators $w'_\sigma$, $w''_\sigma$ to  take values in the orthogonal complements of $\Hg'_\sigma$, $\Hg''_\sigma$.\\

Choose $\varepsilon>0$ for which the claims of Proposition \ref{adjoints} hold, and endow the product 
$${\cal C}:=V^{0,1}\times \Sigma_\varepsilon\times A^{0,1}(L')\times A^{0,1}(L'')$$
 (see Theorem \ref{Hodge} in the Appendix) with the natural $G^\C$-action which is trivial on the first factor, and acts as explained above on the other factors. The system
$$  \label{FirstSyst}
\left\{\begin{array}{ccc}
v'_\sigma \alpha'&=&0\\ 
v''_\sigma \alpha''&=&0\\ 
\bar\partial v+\alpha'\wedge  \alpha''&=&0\\
\sigma'\alpha '+ 2v\wedge \alpha'&=&0\\
\sigma''\alpha ''- 2v\wedge \alpha''&=&0
\end{array}
\right.    \ ,
\eqno{(\Ng)}
$$
on ${\cal C}$ is $G^\C$-equivariant.  Our hypothesis $p_g(X)=0$, implies that the restriction %
$$\bar\partial_{0}:=\resto{\bar\partial}{V^{0,1}}:V^{0,1}\to A^{0,2}(X)$$
 is an isomorphism, hence the third equation of $(\Ng)$ is equivalent to the quadratic equation $v=-\bar\partial_{0}^{-1}( \alpha'\wedge \alpha'')$. The space of  solutions ${\cal C}^\Ng$   of $(\Ng)$ is a finite dimensional complex space. A solution with trivial $\alpha'$, $\alpha''$-components also has trivial $v$-component and, under our regularity assumption, the map $\Ng$ defined by the left hand terms of $(\Ng)$ is submersive at at any such solution. Therefore ${\cal C}^\Ng$ is smooth at such a point with tangent space
\begin{equation}\label{TCNg}
T_{(0,\sigma,0,)} {\cal C}^\Ng=H^{0,1}\oplus \Hg'_\sigma\oplus \Hg''_\sigma=H^{0,1}\oplus \Hg_\sigma\ ,
\end{equation}
where we have put $\Hg_\sigma:=\Hg'_\sigma\oplus \Hg''_\sigma$. The quotient
$${\cal N}:=\qmod{{\cal C}^\Ng}{G_{x_0}}\ .
$$
by the cyclic group $G_{x_0}$ comes with a residual $\C^*$-action given explicitly by $\zeta\cdot(v,\sigma,\alpha',\alpha''):=(v,\sigma,\zeta^2\alpha',\zeta^{-2}\alpha'')$. The fixed point locus ${\cal N}^{\C^*}$ is the space of orbits  of points with vanishing $(\alpha',\alpha'')$-component, hence  it can be identified with $\Sigma_\varepsilon/G^\C={\cal M}(L)_\varepsilon$, and ${\cal N}$ is smooth around the fixed point locus.  

The assignments $\sigma\mapsto \Hg'_\sigma$, $\sigma\mapsto \Hg''_\sigma$ are holomorphic and $G_{x_0}$-equivariant, and  the canonical maps $\Hg'_\sigma\to H^1({\cal L}'_{\sigma})$, $\Hg''_\sigma\to H^1({\cal L}''_{\sigma})$ are isomorphisms. Factorizing by $G_{x_0}$ we obtain holomorphic bundles $\Hg'$, $\Hg''$ on ${\cal M}(L)_\varepsilon$ with obvious isomorphisms $\Hg'=\resto{{\cal H}'}{{\cal M}(L)_\varepsilon}$,
$\Hg''=\resto{{\cal H}''}{{\cal M}(L)_\varepsilon}$.  Put $\Hg:=\Hg'\oplus \Hg''$. Via the identification ${\cal N}^{\C^*}={\cal M}(L)_\varepsilon$, the restriction of the holomorphic tangent bundle  ${\cal T}_{\cal N}$   to ${\cal N}^{\C^*}$ is
\begin{equation}\label{TN}
\resto{{\cal T}_{\cal N}}{{\cal N}^{\C^*}}={\cal T}_{{\cal M}(L)}\oplus \Hg=\resto{{\cal T}_{\Hg} }{{\cal M}(L)_\varepsilon}\ .
\end{equation}

The following remark (whose proof will be omitted) explains the role of the moduli space ${\cal N}$ in our arguments: its $\C^*$-quotient  
is mapped naturally to the quotient ${\cal A}^{0,1}_\delta(E)^{\rm int}$.
\begin{re}  \label{eta} Put $E=L\oplus \check{L}$. The map 
$ {\eta}:  {\cal C} \to  {\cal A}^{0,1}_\delta(E)$
given by 
$$ {\eta}(v,\sigma,\alpha',\alpha''):=\left(
\begin{matrix}
\sigma+v&\alpha'\\
\alpha''& \check{\sigma} -v
\end{matrix}
 \right)\ .
$$
has the properties:
\begin{enumerate}[1.]
\item is holomorphic and equivariant with respect to the group monomorphism 
$$\iota:G^\C\to {\cal G}_{E}^\C:=\Gamma(X,\SL(E))\ ,\ \iota(\varphi):= \left(
\begin{matrix}\varphi&0\\ 0& \varphi^{-1}
\end{matrix}
 \right)\ ,
$$
\item  maps ${\cal C}^\Ng$ into ${\cal A}^{0,1}_\delta(E)^{\rm int}$, hence it induces a map 
$$[\eta]:{\cal N}/\C^*\to {\cal A}^{0,1}_\delta(E)^{\rm int}/{\cal G}_{E}^\C\ ,$$
\item There exists an open neighborhood $W$ of $T$ in ${\cal N}/\C^*$ such that the restriction 
$$[\eta^*]:=\resto{[\eta]}{W\setminus {\cal N}^{\C^*}}$$
of $[\eta]$ takes values in ${\cal M}^\si_{\cal D}(E)$ and is holomorphic.

\end{enumerate}

\end{re}

 One can prove that, if $W$ is sufficiently small,  the restriction $[\eta^*]$ is an open embedding into ${\cal M}^\si_{\cal D}(E)$ (hence in particular injective). Since we are interested only in moduli space polystable structures, we will not need this result. Note that $W\setminus {\cal N}^{\C^*}$ is not Hausdorff in general.
 \\

${\cal N}$ comes with a natural holomorphic map 
$$\pi:{\cal N} {\to} {\cal M}(L)_\varepsilon,\ [v,\sigma,\alpha',\alpha'']\mapsto [\sigma]\ .$$
 More precisely $\pi$ is a holomorphic (non-linear) subfibration of the vector bundle  $V^{0,1}\times (A'\times_{{\cal M}(L)_\varepsilon} A'')$ over ${\cal M}(L)_\varepsilon$, where $A'$, $A''$ are the bundles associated with the principal $G_{x_0}$-bundle $p_\varepsilon:\Sigma_\varepsilon\to {\cal M}(L)_\varepsilon$ and the natural representations of $G_{x_0}$ in  $A^{0,1}(L')$, $A^{0,1}(L'')$ respectively. After suitable Sobolev completions $A'$, $A''$ become holomorphic Banach bundles, and ${\cal N}$ is tangent to the finite rank subbundle $\{0\}\times\Hg$ along the zero section.  Is important to note that, using the operator families $(w'_\sigma)$, $(w''_{\sigma})$, the fibration $\pi$ can be {\it holomorphically} ``linearized" over  ${\cal M}(L)_\varepsilon$ in an explicit way:

 \begin{re} \label{isoiso}   The map  
$${\cal C}\to \Sigma_\varepsilon\times A^{0,1}(L')\oplus A^{0,1}(L'')\ ,\ (v,\sigma,\alpha',\alpha'')\mapsto \left(
\begin{matrix}
\sigma\\
 \alpha'+2w'_\sigma ( v\wedge \alpha' )\\
  \alpha''-2w''_\sigma ( v \wedge \alpha'') 
\end{matrix}\right)
 $$
is $G^\C$-equivariant,  induces  a   holomorphic map $\Ag:{\cal C}^\Ng\to p_\varepsilon^*(\Hg)$ over $\Sigma_\varepsilon$ with the properties:
\begin{enumerate}[1.]
\item For any  $\sigma\in \Sigma_\varepsilon$ one has $D_{(0,\sigma,0)}\Ag=\id$, in particular $\Ag$  is a local biholomorphism at any point of ${\cal C}^{\C^*}$.

\item $\Ag$ induces $\C^*$-equivariant holomorphic map $\ag:{\cal N}\to \Hg$ over ${\cal M}(L)$, which is a local biholomorphism at any point of ${\cal N}^{\C^*}$.

\item There exists an open, $S^1$-invariant  neighborhood ${\cal W}$ of $T$ in ${\cal N}$ such that  
$$\ag_{\cal W}:=\resto{\ag}{{\cal W}}:{\cal W}\to \ag({\cal W}) $$
 is a biholomorphism  over ${\cal M}(L)$.

 \item $\ag_{\cal W}$ and its inverse $\bg_{\cal W}:=\ag_{\cal W}^{-1}$  admit $G^\C$-invariant lifts $\Ag_{\cal W}$, $\Bg_{\cal W}$ to the pre-images of their domains in ${\cal C}^\Ng$, $p_\varepsilon^*(\Hg)$ respectively.
\end{enumerate}
 \end{re}
Denoting by $\Hg^*$ the complement of the zero section in $\Hg$ note that 
 \begin{re}\label{etale} The holomorphic map $\{\ag^*\}:\{{\cal W}\setminus{\cal N}^{\C^*}\}/\C^*\to \Hg^*/\C^*$ induced by $\ag$ is is étale.  
  \end{re}
  Note that we cannot expect this map to be injective.

 \subsection{The   moduli spaces ${\cal N}^{\Sg\Ig}$, ${\cal N}^{\Sg\Ig\Mg}$} 
 
To complete the proof of the holomorphic model theorem we need a finite dimensional description of the moduli spaces ${\cal M}^\ASD_a(E)$, $\hat {\cal M}^\ASD_a(E)_\lambda$ around ${\cal T}_\lambda$ (respectively ${\cal P}_\lambda$).  On the product ${\cal A}(L)\times A^1(L')$ consider the equations

$$\left\{\begin{array}{ccc}
\Lambda_g (\partial_{b'} \beta^{01} - \bar\partial_{b'}\beta^{10}) &=&0\\
 \Lambda_g (\partial_{b'}\beta^{01} +  \bar\partial_{b'}{\beta}^{10}) &=&0\\
p_r\{\Lambda_g (F_b -\frac{1}{2} F_a)-i(|\beta^{01}|^2-|\beta^{10}|^2)\}&=&0\\
p_r \Lambda_g d^c (b-b_0)&=&0\\
\end{array}
\right. \eqno{(\Sg)}
$$

$$\left\{\begin{array}{ccc}
 \bar\partial_{b'}{\beta^{01}}&=&0\\
\partial_{b'}{\beta}^{10}&=&0\\
F_b^{0,2}-{\beta^{01}}\wedge  {\beta^*}^{01}&=&0 
\end{array}\right.\eqno{(\Ig)}
$$
$$ \frac{1}{2}\int_X \{i\Lambda_g \big(F_b -\frac{1}{2} F_a\big)+(|\beta'|^2-|\beta''|^2)\}\vol_g=0   \ . \eqno{(\Mg)}
$$
We denote by 
$$\Sg: {\cal A}(L)\times A^1(L')\to A^0(L') \oplus A^0(L') \oplus A^0(X,i\R)_{r}\oplus A^0(X,i\R)_{r}\ ,$$
$$\Ig: {\cal A}(L)\times A^1(L')\to A^{0,2}(L')\oplus A^{0,2}(L'')\oplus A^{0,2}(X)\ ,\ \Mg:{\cal A}(L)\times A^1(L')\to\R $$
the maps defined by the left hand of $(\Sg)$, $(\Ig)$, $(\Mg)$ respectively. Note that $\Sg$, $\Ig$ are $G$-equivariant and $\Mg$ is $G$-invariant. We denote by $\{{\cal A}(L)\times A^1(L')\}^\Sg$, $\{{\cal A}(L)\times A^1(L')\}^{\Sg\Ig}$, $\{{\cal A}(L)\times A^1(L')\}^{\Sg\Ig\Mg}$ the spaces of solutions of the   system indicated as exponent, and by 
${\cal N}^{\Sg\Ig}$, ${\cal N}^{\Sg\Ig\Mg}$  the $G_{x_0}$-quotients of the latter two spaces (which are finite dimensional).
 One has obvious identifications
 $$\{{\cal N}^{\Sg\Ig}\}^{S^1}={\cal M}^\HE(L)\ ,\ \{{\cal N}^{\Sg\Ig\Mg}\}^{S^1}=T\ ,
 $$
where  
$$T:=\qmod{b_0+H_{\rm cl}}{G}=\left\{[b]\in {\cal M}^\HE(L)|\ \int_X(i\Lambda_g F_{b})\vol_g=\frac{1}{2}\int_X (i\Lambda_g F_a)\vol_g\right\}\ ,
$$
corresponds via the Kobayashi-Hitchin identification ${\cal M}^\HE(L)={\cal M}(L)$ to the circle denoted in the previous section by the same symbol. Note that (under our regularity condition)  ${\cal N}^{\Sg\Ig}$ and ${\cal N}^{\Sg\Ig\mg}$   are smooth at any point of $T$. The restriction to $T$ of the corresponding tangent bundles  are:
\begin{equation}
\resto{T_{{\cal N}^{\Ag\Ig}} }{T}=T_{{\cal M}^\HE(L)}\oplus (\bar\Hg''_T\oplus \Hg'_T)\ ,\ \resto{T_{{\cal N}^{\Ag\Ig}} }{T}=T_{T}\oplus(\bar\Hg''_T\oplus \Hg'_T) ,
\end{equation}
where   $\Hg'_T$, $\Hg''_T$ are the restriction to   $T$  of the bundles  $\Hg'$, $\Hg''$ defined in section \ref{interm}, and  are obtained from the families of vector spaces
$$b_0+ H^1_{\rm cl}\ni b\to  \Hg'_b:=\ker \bar\partial_{b'}\cap\ker(\Lambda_g\partial_{b'})\ ,\ b_0+ H^1_{\rm cl}\ni b\to \Hg''_{b}:= \bar\partial_{b''}\cap\ker(\Lambda_g\partial_{b''}).
$$
The bundle $\bar\Hg''_T$ is obtained using the vector spaces  $\bar \Hg''_{b}:=\{(\beta'')^*|\ \beta\in \Hg''_b\}$.
 Put
$$\{{\cal N}^{\Sg\Ig\Mg}\}^{*}:={\cal N}^{\Sg\Ig\Mg}\setminus \{{\cal N}^{\Sg\Ig\Mg}\}^{S^1}\ ,\ {\cal N}^{\Sg\Ig\Mg}_\epsilon:=\{[b,\beta]\in {\cal N}^{\Sg\Ig\Mg}|\ \|\beta\|_{L^\infty} <\epsilon\},$$
$$\{{\cal N}^{\Sg\Ig\Mg}_\epsilon\}^{*}:={\cal N}^{\Sg\Ig\Mg}_\epsilon\setminus  \{{\cal N}^{\Sg\Ig\Mg}\}^{S^1}\ .
$$  

\begin{pr}  \label{theta} Put $E:=L\oplus\check{L}$. The map $\theta:{\cal A}(L)\times A^1(L')\to  {\cal A}_a(E)$ given by
$$(b,\beta)\mapsto \left(\begin{matrix}
d_b & \beta\\ -\beta^* & d_{\check{b}}
\end{matrix}
 \right)
$$
has the following properties:
\begin{enumerate}
\item is an affine isomorphism, which
 is equivariant with respect to the group morphism $G\to {\cal G}_E$ given by $\varphi\mapsto \left(\begin{matrix}
\varphi & 0\\ 0 &  \varphi^{-1}
\end{matrix}
 \right)$,
\item maps ${\cal A}(L)\times A^1(L')^{\Sg\Ig\Mg}$ into ${\cal A}_a^\ASD(E)$ and, for sufficiently small $\epsilon>0$, the induced map 
 $$[\theta]: {\cal N}^{\Sg\Ig\Mg}/S^1\to {\cal M}_a^\ASD(E)$$
 maps  isomorphically  ${\cal N}^{\Sg\Ig\Mg}_\epsilon/S^1$ onto an open neighborhood  ${\cal O}_\lambda$ of ${\cal T}_\lambda$ in  ${\cal M}_a^\ASD(E)$,
and restricts to a diffeomorphism 
$$[\theta^*]:\{{\cal N}^{\Sg\Ig\Mg}_\epsilon\}^*/S^1\to ({\cal O}_\lambda\setminus {\cal T}_\lambda)\subset {\cal M}^\ASD_a(E)^*\ .$$

\end{enumerate}

\end{pr}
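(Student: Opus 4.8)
The plan is to dispatch statement 1 by the linear algebra of block connection matrices, and to obtain statement 2 from the Donaldson--Kronheimer slice theorem \cite{DK}, once the systems $(\Sg)$, $(\Ig)$, $(\Mg)$ have been recognized as the projective ASD equations together with a Coulomb gauge-fixing relative to the reducible connections. First I would prove statement 1. Writing a Hermitian $A\in{\cal A}_a(E)$ on $E=L\oplus\check L$ in block form, the Hermitian condition forces the lower-left block to be minus the adjoint of the upper-right one, and the constraint $\det(A)=a$ forces the lower-right block to be $d_{\check b}$ as soon as the upper-left is $d_b$, since $\check b=b^\vee\otimes a$ is exactly the connection for which the induced determinant connection equals $a$ (the off-diagonal blocks are trace-free, hence irrelevant for $\det$). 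Thus $(b,\beta)\mapsto\theta(b,\beta)$ is a bijection, visibly affine, so an affine isomorphism ${\cal A}(L)\times A^1(L')\stackrel{\simeq}{\to}{\cal A}_a(E)$. Equivariance is the direct computation $\mathrm{diag}(\varphi,\varphi^{-1})\cdot\theta(b,\beta)=\theta(\varphi\cdot b,\varphi^2\beta)$, using $|\varphi|=1$ so that $(\varphi^2\beta)^*=\varphi^{-2}\beta^*$, which matches the $G$-action on $A^1(L')$ recorded in section \ref{interm}.

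For statement 2(a) I would compute the curvature of $\theta(b,\beta)$ block by block,
\begin{equation*}
F_{\theta(b,\beta)}=\begin{pmatrix} F_b-\beta\wedge\beta^* & d_{b'}\beta\\ -(d_{b'}\beta)^* & F_{\check b}-\beta^*\wedge\beta\end{pmatrix},
\end{equation*}
and read off $(F^0)^+=0$ through its $(0,2)$-part and its $(1,1)$-trace. The integrability $F^{0,2}=0$ unwinds exactly into the three equations of $(\Ig)$: the off-diagonal $(0,2)$-component is $\bar\partial_{b'}\beta^{01}=0$, the lower-left gives $\partial_{b'}\beta^{10}=0$, and the diagonal gives $F_b^{0,2}-\beta^{01}\wedge(\beta^*)^{01}=0$, the $\check L$-block being automatically implied because $\det(A)=a$ and $F_a^{0,2}=0$. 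The condition $\Lambda_g(F^0)=0$ splits into its off-diagonal part $\Lambda_g(\partial_{b'}\beta^{01}+\bar\partial_{b'}\beta^{10})=0$, the second equation of $(\Sg)$, and its trace-free diagonal part $\Lambda_g(F_b-\beta\wedge\beta^*)=\tfrac12\Lambda_g F_a$, which is recovered by combining the reduced third line of $(\Sg)$ with its harmonic average $(\Mg)$. Hence $\theta$ maps $\{{\cal A}(L)\times A^1(L')\}^{\Sg\Ig\Mg}$ into ${\cal A}_a^\ASD(E)$. Simultaneously I would identify the two remaining equations as Coulomb gauge-fixing at the reducible connection $A_0=\theta(b,0)$, whose bundle $\mathfrak{su}(E)$ splits $A_0$-invariantly into the diagonal $i\R$-line and the off-diagonal $L'$-part: via the Kähler-type identities $\bar\partial^*=i\Lambda_g\partial$, $\partial^*=-i\Lambda_g\bar\partial$ (adapted through the operators $v'_\sigma$, $v''_\sigma$ of Proposition \ref{adjoints} on the Gauduchon surface), the first line of $(\Sg)$ is $d_{b'}^*\beta=0$ and the fourth line is the reduced $U(1)$-Coulomb condition on $b-b_0$, so together they are precisely the slice equation $d_{A_0}^*(A-A_0)=0$.

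For statements 2(b)--(c) I would then invoke the slice theorem. Along the compact circle $T$ the connections $A_0=\theta(b,0)$ are reducible projective ASD connections with stabilizer $S^1\subset{\cal G}_E$, and the regularity hypothesis (Definition \ref{regular}, through Lemma \ref{H1iso}) kills the obstruction spaces, so $\{{\cal A}(L)\times A^1(L')\}^{\Sg\Ig\Mg}$ is smooth near $T$ with the tangent spaces recorded in section \ref{interm}. Since by the previous paragraph the system is exactly ``projective ASD $+$ Coulomb slice'', the slice theorem of \cite{DK} shows that for each $[A_0]\in{\cal T}_\lambda$ a small slice solution set, modulo $\mathrm{Stab}(A_0)=S^1$, maps homeomorphically onto a neighborhood of $[A_0]$ in ${\cal M}_a^\ASD(E)$, and diffeomorphically onto its irreducible part where $\beta\neq0$ (there the residual $S^1$ acts with constant stabilizer $\{\pm\id_E\}$). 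Using compactness of $T$ I would assemble these local models into a single ${\cal O}_\lambda$ with a uniform $\epsilon>0$, and pass to the $G_{x_0}$- and then $S^1$-quotient to obtain the asserted isomorphism ${\cal N}^{\Sg\Ig\Mg}_\epsilon/S^1\to{\cal O}_\lambda$ together with the diffeomorphism on the punctured part.

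I expect the main obstacle to lie in the global content of 2(b). The slice theorem is local around each $[A_0]$, so establishing that $[\theta]$ is injective on a \emph{full} neighborhood of the whole circle, with a single $\epsilon$ valid uniformly, forces one to combine the compactness of $T$ with genuine control near the reduction locus, where the stabilizer jumps from $\{\pm\id_E\}$ to $S^1$ and where distinct gauge classes could a priori produce slice representatives sharing the same $(b,0)$ but different small $\beta$. The second delicate point is verifying that the Coulomb conditions in $(\Sg)$ really coincide with $d_{A_0}^*=0$ on the non-Kählerian Gauduchon surface, where the Kähler identities acquire torsion corrections; this is precisely the reason the operators $v'_\sigma,v''_\sigma$ of Proposition \ref{adjoints}, rather than the naive formal adjoints, were introduced.
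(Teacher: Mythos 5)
Your proposal is correct and follows essentially the same route the paper indicates: the paper omits the detailed proof but states precisely that the pull-back of the projective ASD equation via $\theta$ coincides with the system formed by $(\Ig)$, $(\Mg)$ and the second and third equations of $(\Sg)$, while the first and fourth equations of $(\Sg)$ serve as a slice/gauge-fixing reducing the ${\cal G}_E$-factorization to the $G$-factorization — which is exactly your block-curvature computation plus slice-theorem argument. Your closing caveats (uniform injectivity along the compact circle $T$, and the fact that the slice condition is the modified Gauduchon one of \cite{LT1} rather than the naive $d_{A_0}^*$) are the right points to flag and are consistent with the paper's setup.
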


We will omit the proof of this statement. We mention only that the pull back of the ASD equation via $\theta$ obviously coincides  with the system formed by $(\Jg)$, $(\Mg)$, the second and the third equation of $(\Sg)$. The role of the first and fourth equation of $(\Sg)$  is to reduce (around $\{b_0+H_{\rm cl}\}\times\{0\}$) the ${\cal G}_E$-factorization involved in the definition of  ${\cal M}_a^\ASD(E)$ to the  $G$-factorization.
\qed

Taking into account this result we can {\it define} the blowup moduli space $\hat {\cal M}_a^\ASD(E)_\lambda$ using the blowup $S^1$-quotient of ${\cal N}^{\Sg\Ig\Mg}_\epsilon$. More precisely, we put
$$\hat {\cal M}_a^\ASD(E)_\lambda:=\{{\cal M}_a^\ASD(E)\setminus {\cal O}_\lambda\}{\textstyle \coprod_{[\theta^*]}}\  \widehat{ {\cal N}^{\Sg\Ig\Mg}_\epsilon}/S^1\ .
$$
One can prove that this construction is equivalent to  the one given in \cite{Te3} section 1.4.2. With this definition we have obviously

\begin{re} \label{blowup-embed} The restriction $[\theta^*]:\{{\cal N}^{\Sg\Ig\Mg}_\epsilon\}^*/S^1\to {\cal M}^\ASD_a(E)^*$ of $[\theta]$ extends continuously to a smooth open   embedding
$\widehat{[\theta]}:  \left\{\widehat{ {\cal N}^{\Sg\Ig\Mg}_\epsilon}/S^1\right\}\to \hat {\cal M}_a^\ASD(E)_\lambda$
which identifies  $\partial \widehat{ {\cal N}^{\Sg\Ig\Mg}_\epsilon}$ with the boundary  ${\cal P}_\lambda$ of  ${\cal M}_a^\ASD(E)_\lambda$.
\end{re}

\subsection{The construction of the isomorphism}

Put 
$${\cal G}_E^0:=\left\{f=\left(\begin{matrix}
f_{11} & f_{12}\\
f_{21} & f_{22}
\end{matrix}
\right)\in {\cal G}^\C_E\left| \ \int_X (f_{11}- 1)=0\right.\right\}
$$ 
${\cal G}_E^0$ is not a subgroup of ${\cal G}^\C_E$; it is a complex hypersurface which is transversal to  the subgroup 
$\left\{\left(\begin{matrix} z &0 \\ 0& z^{-1}\end{matrix}\right)\vline\ z\in \C^*\right\}\simeq \C^*$ at $\id_E$, and is invariant under the inner action of the  subgroup
\begin{equation}\label{IdSubgr}\left\{\left(\begin{matrix}\phi &0 \\ 0& \phi^{-1}\end{matrix}\right)\vline\ \phi\in {\cal C}^\infty(X,\C^*)\right\}\simeq {\cal G}^\C\ .
\end{equation}
Let
$$S:{\cal A}(L)\times A^1(L')\times {\cal G}_E^0\to   A^0(L') \oplus A^0(L') \oplus A^0(X,i\R)_{r}\oplus A^0(X,i\R)_{r}$$
 be the map defined by 
$$S(b,\beta,f):=\Sg(f\cdot (b,\beta))\ ,
$$
where on the right we used the ${\cal G}^\C_E$-action on ${\cal A}(L)\times A^1(L)$ induced via $\theta$ and (\ref{IdSubgr}) from the standard action of ${\cal G}^\C_E$ on ${\cal A}_a(E)$ (see \cite{D}).  We recall that the latter action is induced from the standard ${\cal G}^\C_E$-action on ${\cal A}^{0,1}_\delta(E)$ (see section \ref{semi}) via the real isomorphism of affine spaces
$$ j:{\cal A}_a(E)\to {\cal A}^{0,1}_\delta(E)\ ,\ A\mapsto \bar\partial_A\ .
$$ 

\begin{pr} \label{D} The map $S$ has the following properties
\begin{enumerate}[1.]
\item Is $G$-equivariant with respect to the action
$$\varphi\cdot (\beta_1,\beta_2,u_1, u_2)=(\varphi^2 \beta_1,\varphi^2\beta_2,u_1,u_2)$$
of $G$ on $A^0(L') \oplus A^0(L') \oplus A^0(X,i\R)_{r}\oplus A^0(X,i\R)_{r}$.
\item \label{derivative} For every $b\in {\cal A}(L)$ one has 
$$\frac{\partial S}{\partial f}(b,0,\id_E)=P_b:=\delta_b d_b\ ,$$
 where
$$d_b:T_{\id_E}{\cal G}_E^0=A^0(X,\C)_r\times A^0(L')\times A^0(L'')\to  A^1(X,i\R)\times A^1(L')\ ,
$$
$$\delta_b: A^1(X, i\R)\times A^1(L')\to  A^0(L')\times A^0(L')\times A^0(X,i\R)_r\times A^0(X,i\R)_r 
$$
are first order operators given by
$$d_b(f,v',v''):=\left(
\begin{array}{c}
-\partial \bar f+ \bar\partial f\\
-\partial_{b'} (v'')^*+ \bar\partial_{b'}v'
\end{array}
\right),\ \delta_b(\dot b,\dot\beta):=\left(\begin{array}{c}
\Lambda_g (\partial_{b'} \dot\beta^{01} - \bar\partial_{b'}\dot\beta^{10})  \\
 \Lambda_g (\partial_{b'}\dot\beta^{01} +  \bar\partial_{b'}{\dot \beta}^{10}) \\
p_r \Lambda_g d\dot b\\
p_r\Lambda_g d^c\dot b
\end{array}\right) .
$$
This composition is an elliptic second order operator, and is an isomorphism when $b\in b_0 + H_{\rm cl}$.

\item There exists an open, $G$-invariant neighborhood ${\cal U}$ of $\left\{b_0+ H_{\rm cl}\right\}\times\{0\}$
in  ${\cal A}(L)\times A^1(L')$, and an open, $G$-invariant neighborhood ${\cal V}$ of $\id_E$ in ${\cal G}_E^0$ such that the intersection of the zero locus $Z(S)$ with ${\cal U}\times {\cal V}$ is the graph of a smooth, $G$-equivariant function $r: {\cal U}\to  {\cal V}$
satisfying 
$$\resto{r}{{\cal U}\cap (b_0+H_{\rm cl})}\equiv \id_E.$$
\end{enumerate}

\end{pr}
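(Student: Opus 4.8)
The plan is to prove the three assertions in turn: (1) is formal, flowing from the $G$-equivariance of $\Sg$; (2) is a linearization that I identify with the Laplacian of a gauge-fixing complex; and (3) is the implicit function theorem, made equivariant and uniform over the compact circle $T$. For (1) I argue formally: the ${\cal G}^\C_E$-action on ${\cal A}(L)\times A^1(L')$ is a left action obtained through $\theta$ and the embedding (\ref{IdSubgr}), and $G$ acts on $(b,\beta)$ through the same embedding $\iota$, so for $\varphi\in G$,
$$(\iota(\varphi)f\iota(\varphi)^{-1})\cdot\bigl(\varphi\cdot(b,\beta)\bigr)=\iota(\varphi)\cdot\bigl(f\cdot(b,\beta)\bigr)=\varphi\cdot\bigl(f\cdot(b,\beta)\bigr).$$
Applying $\Sg$ and using its $G$-equivariance gives $S(\varphi\cdot(b,\beta),\iota(\varphi)f\iota(\varphi)^{-1})=\varphi\cdot S(b,\beta,f)$, with the asserted action on the target being exactly the one under which $\Sg$ is equivariant.

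For the derivative in (2) I use the chain rule: $\partial S/\partial f$ at $(b,0,\id_E)$ is $D\Sg_{(b,0)}$ composed with the infinitesimal gauge action $L_b$ of $T_{\id_E}{\cal G}_E^0$ on $(b,0)$. Writing a tangent vector of ${\cal G}_E^0$ in block form relative to $E=L\oplus\check L$, the infinitesimal action on the semiconnection $\bar\partial_{\theta(b,0)}$ is $-\bar\partial_{\theta(b,0)}\xi$; applying $Dj^{-1}$, which sends a $(0,1)$-form to the skew-Hermitian $1$-form with that $(0,1)$-part, and reading off the blocks recovers $L_b=d_b$ up to the overall sign convention. Linearizing $\Sg$ at $\beta=0$ is immediate, as the terms $|\beta^{01}|^2-|\beta^{10}|^2$ and the $\beta$-dependent connection contributions are quadratic and drop out, giving $D\Sg_{(b,0)}=\delta_b$; hence $\partial S/\partial f=\delta_b d_b=P_b$. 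To obtain ellipticity and invertibility on $b_0+H_{\rm cl}$ at once, I would recognize $\delta_b$ as, up to sign, the formal $L^2$-adjoint of $d_b$: the Gauduchon/Kähler identities turn $\Lambda_g\partial$, $\Lambda_g\bar\partial$ into the adjoints of $\bar\partial$, $\partial$, and $p_r$ is dual to the mean-zero constraint defining ${\cal G}_E^0$. Thus $P_b$ is, up to sign, the non-negative Laplacian $d_b^\ast d_b$, which is elliptic with $\ker P_b=\ker d_b$; and $\ker d_b=0$ because its scalar part has only the constants in its kernel (removed by $p_r$) while its $L'$-part forces $v'$ and $(v'')^\ast$ to be holomorphic sections of ${\cal L}'$, which vanish by the first conclusion of Lemma \ref{H1iso}.

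With $P_b$ invertible for every $b\in b_0+H_{\rm cl}$, I pass to suitable Sobolev completions so that $S$ becomes a smooth map of Banach manifolds and apply the implicit function theorem. Since invertibility is open and $T=(b_0+H_{\rm cl})/G$ is compact, the hypotheses hold uniformly and produce a graph $f=r(b,\beta)$ over a $G$-invariant neighborhood ${\cal U}$; equivariance of $r$ follows from (1) and the uniqueness clause of the theorem, and elliptic regularity makes $r$ smooth and independent of the Sobolev index. Finally $r\equiv\id_E$ on $\{b_0+H_{\rm cl}\}\times\{0\}$ because $\Sg(b,0)=0$ there: at $\beta=0$ the first two components vanish, the third vanishes since $\Lambda_g F_b=\Lambda_g F_{b_0}=\tfrac12\Lambda_g F_a$ (as $b-b_0$ is closed and $b_0$, $a$ are Hermite--Einstein), and the fourth vanishes by harmonicity of $b-b_0$.

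I expect the main obstacle to be the invertibility half of (2): ellipticity is a routine symbol computation, but the clean identification of $P_b$ with $d_b^\ast d_b$ and of $\ker d_b$ with the spaces killed by Lemma \ref{H1iso} requires the Hodge-theoretic bookkeeping relating the coupled operator $\delta_b d_b$ to the deformation complex of the reduction. In (3) the only non-formal point is arranging a single equivariant graph over the non-compact affine slice $b_0+H_{\rm cl}$, which the compactness of $T$ together with equivariance resolves.
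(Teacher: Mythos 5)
Your overall architecture (equivariance by conjugation, chain rule for the linearization, ellipticity plus invertibility, then the implicit function theorem made uniform over the compact circle $T$) matches the paper's intent — the paper itself only says the first two claims are ``direct computations'' and the third follows by the implicit function theorem — and parts (1) and (3) of your argument, as well as the computation of $\frac{\partial S}{\partial f}$, are sound. But there is a genuine gap exactly at the point you flagged as the main obstacle. You propose to prove invertibility of $P_b$ by recognizing $\delta_b$ as the formal $L^2$-adjoint of $d_b$, ``the Gauduchon/Kähler identities turning $\Lambda_g\partial$ into the adjoint of $\bar\partial$,'' so that $P_b=d_b^\ast d_b$ and $\ker P_b=\ker d_b$. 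This identity holds only for Kähler metrics. The surfaces in this paper have $b_1=1$, hence are never Kähler, and for a general Gauduchon metric $\Lambda_g\partial$ differs from $\bar\partial^{\ast}$ by nonzero lower-order torsion terms; likewise $p_r\Lambda_g d^c$ is not $d^{\ast}$, and the scalar operator $Q=p_r\Lambda_g d^c d$ is an isomorphism of $A^0(X,\R)_r$ (Prop.~1.2.8 of \cite{LT1}) without being self-adjoint. Consequently $P_b$ is \emph{not} $d_b^\ast d_b$, you cannot integrate by parts to get $\ker P_b=\ker d_b$, and your argument for injectivity of $P_b$ does not close. (Your computation that $\ker d_b=0$ is correct but, without adjointness, irrelevant to $\ker P_b$.)

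The conclusion is still true and the repair uses exactly the machinery the paper sets up for this purpose. Since $\delta_b$ and the true adjoint $d_b^{\ast}$ have the same principal symbol, $P_b$ still has the symbol of a Laplacian, hence is elliptic of index zero; it therefore suffices to prove injectivity directly. Because $d_b$ and $\delta_b$ are block-diagonal (scalar block in $f$, bundle block in $(v',v'')$), $P_b\xi=0$ decouples into $\Lambda_g\partial_{b'}\bar\partial_{b'}v'=0$, $\Lambda_g\bar\partial_{b'}\partial_{b'}(v'')^{\ast}=0$, and two scalar equations governed by $Q$. For the bundle block one invokes the maximum principle for $i\Lambda_g\bar\partial\partial$ on a Gauduchon manifold (Proposition \ref{H1}, following \cite{LT1} Thm.~2.2.1): since $b'$ is Hermite--Einstein with vanishing Einstein constant, $\ker(\Lambda_g\partial_{b'}\bar\partial_{b'})=\ker d_{b'}=H^0({\cal L}')$, which vanishes by Lemma \ref{H1iso}(1); similarly for $v''$. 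For the scalar block one uses the invertibility of $Q$ on $A^0(X,\R)_r$. This is why the paper normalizes $a$ and $b_0$ to be Hermite--Einstein and proves Lemma \ref{H1iso} beforehand: they are the inputs to the maximum-principle argument, not to an adjointness identity. With this substitution, the rest of your proof of claims (2) and (3) goes through as written.
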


The first two claims can be checked by direct computations. The third claim follows from the first and the second. The map $r$ is obtained locally, around  the points of $\left\{b_0+ H_{\rm cl}\right\}\times\{0\}$, by applying the implicit function theorem to   $S$.
\qed

\vspace{2mm}

Using $r$ we obtain a $G$-equivariant map $R:{\cal U}\to ({\cal A}(L)\times A^1(L'))^\Sg$ given by
$$ R(b,\beta):= r(b,\beta)\cdot (b,\beta)\ .
$$
By construction, this map has the remarkable property
\begin{re} \label{remarkable}   For any $(b,\beta)\in {\cal U}$ the connections $j(\theta(b,\beta))$, $j(\theta(R(b,\beta)))\in {\cal A}^{0,1}_\delta(E)$  belong to the same ${\cal G}^\C_E$-orbit.
\end{re}
Using Proposition \ref{D} (\ref{derivative}) we see that, for  a point $b\in b_0+ H_{\rm cl}$, one has 
  \begin{equation}\label{DR}
 D_{(b,0)} R=\p_{\ker(\delta_b)} \ ,
  \end{equation}
where $\p_{\ker(\delta_b)}$ stands for the projection $A^1(X,i\R)\times A^0(L')\to \ker(\delta_b)$ associated with the direct sum decomposition
 $ A^1(X, i\R)\times A^1(L')=\im (d_b)\oplus  \ker(\delta_b)$.\\

We will compare now the   moduli spaces ${\cal N}$, ${\cal N}^{\Sg\Ig}$ using the
$\R$-affine, $G$-equivariant embedding %
$$\iota:{\cal C}\to {\cal A}(L)\times A^0(L'),\ \iota(v,\sigma,\alpha',\alpha''):=\left(\begin{array}{c}
(\sigma+\partial_\sigma)+ (-\bar v+ v)\\
\alpha'-(\alpha'')^*
\end{array}\right)\ ,$$
 where $\partial_\sigma$ denotes the unique operator $A^0(L)\to A^{1,0}(L)$ for which $\sigma+\partial_\sigma$ is a Hermitian linear connection on $L$. The pull-back of the ``integrability" system $(\Ig)$ via $\iota$ is precisely the system $(\Ng)$ involved in the definition of ${\cal N}$, therefore $\iota$ induces a map ${\cal C}^\Ng\to ({\cal A}(L)\times A^1(L'))^{\Ig}$ which will be denoted by the same symbol $\iota$.  Note now that, since   $(\Ig)$ is ${\cal G}_E^\C$-invariant, the composition 
$$\Rg:=\resto{R\circ \iota}{\iota^{-1}({\cal U})\cap {\cal C}^\Ng}:{\iota^{-1}({\cal U})}\cap {\cal C}^\Ng\to ({\cal A}(L)\times A^1(L'))^{\Sg}
$$
also takes in fact values in ${\cal A}(L)\times A^1(L')^{\Sg\Ig}$. 
Using (\ref{DR}) and  the identification (\ref{TCNg}) we get

$$D_{(0,\sigma,0,0) }\Rg (\dot\sigma,\dot \alpha', \dot \alpha'')=
\left(\begin{array}{c} -\overline{\dot\sigma}+ \dot\sigma\\
-(\dot \alpha'')^*+\dot \alpha' \end{array} \right) \ \forall \sigma\in \sigma_0+H^{0,1}_{\rm cl}\ ,
$$
for any $\sigma\in \sigma_0+H^{0,1}_{\rm cl}$, $\dot\sigma\in H^{0,1}$, $\dot \alpha'\in \Hg'_\sigma$, $\dot \alpha''\in \Hg''_\sigma$.  This shows that the restriction of  $\Rg$ to a sufficiently small open neighborhood of $T$ (which can be supposed to be $G$-invariant) is a diffeomorphism. Taking $G_{x_0}$-quotients we obtain   a smooth, $S^1$-equivariant map
\begin{diagram}[h=6mm]
\qmod{\iota^{-1}({\cal U})\cap {\cal C}^\Ng}{G_{x_0}}&\rTo^{\rg} &{\cal N}^{\Sg\Ig}\\
\dInto & &\\
{\cal N} &&
\end{diagram}
induced by $\Rg$ and defined on an open, $S^1$-invariant neighborhood of $T$ in ${\cal N}$. 
Let now 
$${\cal W}\subset \qmod{\iota^{-1}({\cal U})\cap {\cal C}^\Ng}{G_{x_0}}$$
 be a sufficiently small, open, $S^1$-invariant   neighborhood  of $T$ in  ${\cal N}$ such that 
 \begin{itemize}
 \item The restriction $\rg_{\cal W}:=\resto{\rg}{{\cal W}}$ is a diffeomorphism on its image ${\cal N}_{{\cal W}}^{\Sg\Ig}:=\rg({\cal W})$.
 \item ${\cal W}\cap {\cal N}^{\Sg\Ig\Mg}\subset {\cal N}^{\Sg\Ig\Mg}_\epsilon$, where $\epsilon>0$ satisfies the property stated in Proposition \ref{theta}, 
 \item  ${\cal W}$  satisfies  the property stated in Remark \ref{isoiso}, 
 \item  the image $W$  of ${\cal W}$ in ${\cal N}/\C^*$  satisfies  the property stated in Remark \ref{eta}.
 \end{itemize}

Consider now the following smooth, $S^1$-invariant $\R$-valued maps 
$$\mg_{\cal W}=\resto{\mg}{{\cal N}_{{\cal W}}^{\Sg\Ig}}:{\cal N}_{{\cal W}}^{\Sg\Ig}\to\R ,\ \psi:=\mg_{\cal W} \circ \rg_{\cal W}: {\cal W}\to\R,\ \varphi:=\mg_{\cal W} \circ \rg_{\cal W}\circ \bg_{\cal W}: \ag({\cal W})\to\R.
$$
(see Remark \ref{isoiso} for the notations $\ag_{\cal W}$, $\bg_{\cal W}$, $\Ag_{\cal W}$, $\Bg_{\cal W}$).
Using the functoriality of the spherical blowup with respect to diffeomorphisms \cite{AK}, we obtain  diffeomorphisms of manifolds with boundary
$$ \widehat{Z(\varphi)}  \textmap{\hat\bg_{\cal W}}  \widehat{Z(\psi)} \textmap{\hat \rg_{\cal W}}   \widehat{Z(\mg_{\cal W})}  \ ,\ 
\qmod{\widehat{Z(\varphi)}}{S^1} \textmap{[\hat\bg_{\cal W}]}  \qmod{\widehat{Z(\psi)}}{S^1}\textmap{[\hat \rg_{\cal W}]}  \qmod{ \widehat{Z(\mg_{\cal W})}}{S^1 } \ ,  $$
extending the diffeomorphisms $\bg_{\cal W}^*$, $\rg_{\cal W}^*$, $[\bg_{\cal W}^*]$, $[\rg_{\cal W}^*]$ between the corresponding interiors. In particular, using Remark \ref{blowup-embed}, we get an open embedding
\begin{equation}\label{blu} \widehat{[\theta]}\circ [\hat\rg_{\cal W}]\circ [\hat\bg_{\cal W}]:\widehat{Z(\varphi)}/S^1 \hookrightarrow \hat {\cal M}^\ASD_a(E)_\lambda\ .
\end{equation}
\vspace{3mm}
\begin{proof} (of Theorem \ref{hol-model})  First of all note that rescaling the metric $h$ on $E$ if necessary we may suppose that the Chern connection $a$ of the pair $({\cal D},\det(h))$ is Hermite-Einstein, hence the formalism developed in sections \ref{a-flip-passage}, \ref{interm} applies.

Using Lemma \ref{P1P2P3} below and  Proposition \ref{prop} it follows that (choosing a smaller ${\cal W}$ if necessary) the blowup $S^1$-quotient $\widehat{Z(\varphi)}/S^1$ is identified  with a neighborhood of $\partial \hat Q$ in the blowup flip passage $\hat Q$ associated with this system. Therefore, using (\ref{blu}), we get an open embedding $ \hat Q\supset O\stackrel{\chi}\hookrightarrow {\cal O}_\lambda\subset  \hat {\cal M}^\ASD_a(E)_\lambda$ satisfying the first claim of Theorem \ref{hol-model}. \\ 
\\
For the second claim of Theorem \ref{hol-model} we have to check that the composition 
$$Z(\varphi)^*/S^1\textmap{[\theta^*]\circ[\rg_{\cal W}^*]\circ[\bg_{\cal W}^*]} {\cal M}^\ASD_a(E)^*$$
becomes holomorphic  if one endows $Z(\varphi)^*/S^1$ with the holomorphic structure induced by the embedding $Z(\varphi)^*/S^1\hookrightarrow  \Hg^*/\C^*$, and ${\cal M}^\ASD_a(E)^*$ with the holomorphic structure induced by the embedding $[j^*]:{\cal M}^\ASD_a(E)^*\hookrightarrow {\cal M}^\si_{\cal D}(E)$.  
 The required holomorphy property follows by Remarks \ref{eta} (the holomorphy of $[\eta^*]$), \ref{etale} (the holomorphy and étale property of $\{\ag^*\}$), using the commutative  diagram

\begin{diagram}[h=8mm]
\qmod{Z(\varphi)^*}{S^1}&\lTo^{\ \ [\ag_{\cal W}^*]\simeq\ \  } &\qmod{Z(\psi)^*}{S^1} &\rTo^{\ \ \ [\rg_{\cal W}^*]\simeq\  \ \ } &\qmod{Z(\mg_{\cal W})^*}{S^1} &\rInto^{\ \ \ [\theta^*]\ \ \ } & {\cal M}^\ASD_a(E)^* &&&\\
\dInto & &\dInto & &&&\dInto^{[j^*]}\\
\Hg^*/\C^*& \lTo^{\{\ag^*\}}&W\setminus {\cal N}^{\C^*} & &\rInto ^{[\eta^*] } & &{\cal M}^\si_{\cal D}(E)\ .
\end{diagram}
\vspace{2mm}\\
The key ingredient in the proof is the commutativity of the right hand rectangle, which follows from Remark \ref{remarkable}:
$$[j^*]([\theta^*]([\rg^*_{\cal W}]([v,\sigma,\alpha',\alpha'']))=[(j\circ\theta)(R(\iota(v,\sigma,\alpha',\alpha'')))]=[(j\circ\theta)(\iota(v,\sigma,\alpha',\alpha''))]$$
$$=[\eta(v,\sigma,\alpha',\alpha'')]=[\eta^*]([v,\sigma,\alpha',\alpha''])\ .
$$
\end{proof}
\begin{lm}\label{P1P2P3}
The map $\varphi$ satisfies the properties P1, P2, P3 of section \ref{pert} written for the system 
$$\left({\cal M}(L)_\varepsilon,\resto{{\cal H}'}{{\cal M}(L)_\varepsilon}, \resto{{\cal H}''}{{\cal M}(L)_\varepsilon}, \hg', \hg'', \resto{f_{\cal D}}{{\cal M}(L)_\varepsilon}\right)$$
 defined in section \ref{a-flip-passage}.
\end{lm}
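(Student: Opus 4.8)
The plan is to verify P\ref{intersection}, P\ref{linearization}, P\ref{hessian} by a direct chain‑rule computation of the $1$‑jet and of the fibrewise $2$‑jet of $\varphi=\mg_{\cal W}\circ\rg_{\cal W}\circ\bg_{\cal W}$ along the circle $T$, using three inputs established above: the explicit form of $\mg$ (the left hand side of $(\Mg)$); the fact that $\bg_{\cal W}=\ag_{\cal W}^{-1}$ is fibrewise over ${\cal M}(L)$ with $D_{(0,\sigma,0)}\ag=\id$ along ${\cal N}^{\C^*}$ (Remark \ref{isoiso}); and the first derivative of $\Rg$ at a fixed point, $D_{(0,\sigma,0,0)}\Rg(\dot\sigma,\dot\alpha',\dot\alpha'')=(-\overline{\dot\sigma}+\dot\sigma,\,-(\dot\alpha'')^*+\dot\alpha')$. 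Here the flip‑passage data $(B,E',E'')$ is $({\cal M}(L)_\varepsilon,\Hg',\Hg'')$, and along $T$ the metrics $h',h''$ are the $L^2$‑products $\hg',\hg''$ on $\Hg'_x,\Hg''_x$.

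First I would prove P\ref{intersection}. Restricting $\varphi$ to the zero section $B$ kills the $\alpha',\alpha''$ variables; since $\bg_{\cal W}$ and $\rg_{\cal W}$ are $S^1$‑equivariant they carry ${\cal N}^{\C^*}$ into the fixed locus of ${\cal N}^{\Sg\Ig}$, where $\beta=0$, so on $B$ only the curvature term of $(\Mg)$ survives and $\resto{\varphi}{B}([\sigma])=\pi\big(\deg_g({\cal L})-\tfrac12\deg_g({\cal D})\big)$ evaluated at the holomorphic structure of the upper‑left block. As $\deg_g$ is a biholomorphic invariant, and since $r|_T=\id_E$ forces by continuity that $\rg_{\cal W}$ induces a degree‑preserving self‑map of the base fixing $T$ pointwise (no degree‑swap occurs near $T$), this equals $f_{\cal D}([\sigma])$. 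Hence $Z(\resto{\varphi}{B})=Z(f_{\cal D})=T$ and $d_x\resto{\varphi}{B}=d_xf_{\cal D}$ for $x\in T$, which is P\ref{intersection}.

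For the fibrewise statements, fix $x\in T$, parametrize $\Hg'_x\oplus\Hg''_x$ by $u=(u',u'')$, and lift this fibre through $\bg_{\cal W}$ (via the invariant lift $\Bg_{\cal W}$) to a family $c(u)=(v(u),\sigma_x,\alpha'(u),\alpha''(u))\in{\cal C}^\Ng$ with $\alpha'(u)=u'+O(|u|^2)$, $\alpha''(u)=u''+O(|u|^2)$ and $v(u)=-\bar\partial_0^{-1}(\alpha'(u)\wedge\alpha''(u))=O(|u|^2)$. Applying $\Rg$ and then $(\Mg)$, the off‑diagonal contribution is $\tfrac12\int_X(|\beta'(u)|^2-|\beta''(u)|^2)\vol_g$ with $\beta(u)=\alpha'(u)-(\alpha''(u))^*$, so its two components are $u'+O(|u|^2)$ and $-(u'')^*+O(|u|^2)$. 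This contribution is $O(|u|^2)$, hence has vanishing first derivative and fibrewise Hessian $\|u'\|_{L^2}^2-\|u''\|_{L^2}^2=\hg'_x(u',u')-\hg''_x(u'',u'')$, i.e. exactly $\Re\hg'_x-\Re\hg''_x$ after polarization (the factor $\tfrac12$ in $(\Mg)$ being absorbed by the Hessian of a squared norm). Since $\mg$ depends on $\alpha',\alpha''$ only through this term to first order, P\ref{linearization} follows at once.

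It remains, for P\ref{hessian}, to show that the curvature (base) term of $\mg$ contributes nothing to the fibrewise Hessian; this is the main obstacle. That term is $\pi\deg_g$ of the sub‑line‑bundle structure, whose variation along the fibre is carried by $v(u)\in V^{0,1}$ and by the second‑order part of the gauge correction $r$. The key point is that $\deg_g$ depends only on the \emph{harmonic} part of a Dolbeault $(0,1)$‑class, whereas $V^{0,1}=\bar\partial^*A^{0,2}$ (the Hodge complement, available because $p_g=0$ gives $H^{0,2}=0$) is $L^2$‑orthogonal to $H^{0,1}$; thus $v(u)$ is a coexact, hence degree‑stationary, variation, and the correction supplied by $r$ is likewise non‑harmonic and continuous in $u$ with trivial value at $u=0$. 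Consequently $\tfrac{d^2}{du^2}\big|_0$ of the degree term vanishes, it is constant on the fibre to second order, and combining with the previous paragraph yields the fibrewise Hessian $\Re\hg'_x(u',u')-\Re\hg''_x(u'',u'')$, which is nondegenerate, giving P\ref{hessian}. The delicate bookkeeping is precisely the verification that neither the integrability correction $v$ nor the gauge‑fixing map $r$ produces a harmonic component at second order; equivalently, that $\mg$ on the $R$‑slice computes the genuine degree of the sub‑line‑bundle of the varying extension, for which the second fundamental form $\beta$ is the sole source of variation.
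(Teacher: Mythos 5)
Your overall strategy (chain rule on $\varphi=\mg_{\cal W}\circ\rg_{\cal W}\circ\bg_{\cal W}$ along $T$, separating the curvature term of $(\Mg)$ from the quadratic term in $\beta$) is the same as the paper's, and your treatment of P1 and P2 is essentially sound: P1 follows because $\mg(b,0)$ computes $\deg_g$ of the induced holomorphic structure, and P2 follows because the first fibre derivative of $\Mg\circ R$ only sees $\frac{1}{2}\int i\Lambda_g(d\dot b)\vol_g$, which vanishes on fibre directions. The gap is in P3, which is exactly the step the paper flags as delicate. You assert that the curvature term contributes nothing to the fibrewise Hessian because $v(u)\in V^{0,1}$ is ``coexact, hence degree-stationary'' and because the second-order correction produced by the gauge-fixing map $r$ is ``non-harmonic.'' Neither reason is adequate. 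First, $V^{0,1}$ is $\ker(\Lambda_g\partial)$, not $\bar\partial^*A^{0,2}$, and the decomposition $A^{0,1}=H^{0,1}\oplus V^{0,1}\oplus\bar\partial(A^0)$ of Theorem \ref{Hodge} is not $L^2$-orthogonal; the $V$-component is harmless simply because $\Lambda_g d\dot b=0$ by definition of $V$, not by any orthogonality to harmonic forms. Second, and more seriously, ``non-harmonic'' does not imply ``degree-neutral'': the second-order term of $R$ must actually be computed from the implicit definition of $r$, and it equals $-d_b\bigl(P_b^{-1}\bigl(D^2_{(b,0)}\Sg(\dot x_0,\dot x_0)\bigr)\bigr)$, whose connection component is the $d^c$-exact form $i\,d^cQ^{-1}p_r\{|\dot\beta^{01}|^2-|\dot\beta^{10}|^2\}$. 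A $d^c$-exact perturbation of the connection changes $\int i\Lambda_g F_b\,\vol_g$ by $-\int\Lambda_g dd^c(\cdot)\vol_g$, which is \emph{not} zero for a general Hermitian metric; it vanishes here only because $dd^c\omega_g=0$, i.e. precisely by the Gauduchon condition (formula (\ref{Gau}) in the paper). Your argument never invokes this, so it cannot be correct as stated: the cancellation you need is a genuine identity about the metric, not a formal consequence of the structure of the fibration.

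Concretely, to repair the proof you must (i) differentiate the identity $S(x,r(x))=0$ twice to obtain the explicit second-order Taylor coefficient of $R$ along a fibre direction $\dot x_0=(0,\dot\beta)$ with $\dot\beta\in\bar\Hg''_b\oplus\Hg'_b$, namely $\p_{\ker(\delta_b)}(\ddot x_0)-d_b\bigl(P_b^{-1}\bigl(D^2_{x_0}\Sg(\dot x_0,\dot x_0)\bigr)\bigr)$; (ii) compute $D^2_{(b,0)}\Sg$ on such directions and identify the resulting $d^c$-exact connection perturbation; and (iii) kill its contribution to $\frac{d^2}{dt^2}\Mg(R(x(t)))$ via $\int_X\Lambda_g dd^c(\cdot)\vol_g=0$. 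Only then does the Hessian reduce to $\int_X(|\dot\beta^{01}|^2-|\dot\beta^{10}|^2)\vol_g=\hg'(u',u')-\hg''(u'',u'')$ as required by P3.
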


\begin{proof} P1 and P2 follow  immediately  from the formula
\begin{equation}\label{Dmg}
(D_{(b,0)}\Mg )(\dot b,\dot \beta)=\frac{1}{2}\int i\Lambda_g ( d \dot b)\vol_g\ \forall b\in b_0+ H_{\rm cl}\ \forall \dot b\in A^1(X,i\R)\ \forall \dot \beta\in \Hg_b\ .
\end{equation}
P3 is more delicate, because we have to compute the second derivative  of a complicated composition of non-linear maps, including  $\rg_{\cal W}$ which is induced by $R$ (defined  implicitly via $r$, not explicitly).   The result follows from the following formulae, whose proofs will be omitted:
\begin{enumerate}

\item Let $b\in b_0+H_{\rm cl}$ and $\beta\in A^1(L')$. Putting $\xi=(0,\dot \beta)$ we have
$$d_b P_b^{-1}\big((D^2_{(b,0)}\Sg)(\xi,\xi)\big)=\big(i d^c Q^{-1}p_r\big\{ |\dot \beta^{01}|^2-|\dot \beta^{10}|^2\big\}, 0\big)\ ,
$$
where $Q:=p_r\Lambda_g d^c d:A^0(X,\R)_r\textmap{\simeq} A^0(X,\R)_r$ (see section \ref{Hodge}).

\item Let $b\in b_0+H_{\rm cl}$ and let $(x_t)_{t\in (-\varepsilon,\varepsilon)}$ be  a smooth path in ${\cal A}(L)\times A^1(L')$ such that $x_0=(b,0)$,  $\dot x_0\in \ker(\delta_b)$. Then
\begin{enumerate}
\item 
$
\at{\frac{d^2}{dt^2}}{t=0}R(x(t))=\p_{\ker(\delta_b)}(\ddot x_0)- d_b\big (P_b^{-1}\big((D^2_{x_0}\Sg)(\dot x_0,\dot x_0)\big)\big)$.
\item If $\dot x_0=(0,\dot \beta)$ with $\dot\beta\in \bar \Hg''_b\oplus \Hg'_b$ and 
$\ddot x_0\in V\oplus (\bar \Hg''_b\oplus \Hg'_b)$,
then 
$$\at{\frac{d^2}{dt^2}}{t=0}\Mg(R(x(t)))=\int_X (|\dot \beta^{01}|^2-|\dot \beta^{10}|^2)\vol_g\ .
$$
\end{enumerate}
\end{enumerate}

Note that, for obtaining the last formula, we used the important cancellation
\begin{equation}\label{Gau}\int \Lambda_g d d^c \big(Q^{-1}p_r\big\{ |\dot \beta^{01}|^2-|\dot \beta^{10}|^2\big\}\big)\vol_g=0\ ,
\end{equation}
which follows from the Gauduchon condition. To complete the proof one chooses $\sigma\in\Sigma_0$ and applies (\ref{Gau}) to   $x_t=\iota(\Bg(u_t))$, where $(u_t)$ is a path in $\Hg_\sigma$ with $u_0=0$.
 
\end{proof}

\section{Appendix}

\subsection{Hodge type decomposition surfaces on Gauduchon surfaces}\label{HodgeSection}

In this section a {\it complex surface} is a compact, connected 2-dimensional complex manifold. Let $X$ be a complex surface. We   denote by $A^0(X,\R)_r$ (respectively $A^0(X,\C)_r$) the kernel of the operator
$\int_X$ on the space  $A^0(X,\R)$  (respectively $A^0(X,\C)$), i.e., the $L^2$-orthogonal complements of the   line of constants in this space. We will also denote by  the same symbol $p_r$ the  $L^2$-orthogonal projections 
$$ A^0(X,\R)\textmap{p_r} A^0(X,\R)_r\ ,\  A^0(X,i\R)\textmap{p_r} A^0(X,i\R)_r\ ,\  A^0(X,\C)\textmap{p_r} A^0(X,\C)_r\ .$$
The operator     
$$Q:=\resto{p_r\Lambda d^c d}{A^0(X,\R)_r}:A^0(X,\R)_r\to A^0(X,\R)_r$$
 is an isomorphism (see Proposition 1.2.8 p. 33 \cite{LT1}), and the subspace $\ker (p_r\Lambda_g d^c)$ of $A^1(X,\R)$ is a topological complement of $d(A^0(X,\R))$ in $A^1(X,\R)$. 

 \begin{thry} \label{Hodge} Let $(X,g)$ be a Gauduchon surface.   Put
  $$H:=\{a\in i A^{1}(X)|\ \bar\partial a^{01}=0,\ p_r\Lambda_g da=0,\  p_r\Lambda_g d^ca=0 \}
$$
$$H^{0,1}:=\{\alpha\in   A^{0,1}(X)|\ \bar\partial\alpha=0,\ p_r\Lambda_g\partial\alpha=0\}\subset Z_{\bar\partial}^{01}(X)\subset A^{0,1}(X)\ ,
$$
$$H_{\rm cl}:=\{a\in i A^{1}(X)|\   da=0,\  p_r\Lambda_g d^ca=0 \}\ ,
$$
$$H_{\rm cl}^{0,1}:=\{\alpha\in   A^{0,1}(X)|\ d(-\bar\alpha+\alpha)=0,\   p_r\Lambda_g\partial\alpha= 0 \}\ ,
$$
$$V^{0,1}:=\{\alpha\in A^{0,1}(X)|\ \Lambda_g \partial \alpha=0\}\ ,\ V:=\{a\in i A^1(X,\R)|\ \Lambda_g d a=\Lambda_g  d^c a=0\}\ . $$
 \begin{enumerate}
 \item \label{morphisms} The natural morphisms 
$$H^{0,1}\to H^{0,1}_{\bar\partial}(X)=H^1(X,\cal{O}_X)\ ,\ H_{\rm cl}\to H^1_{\rm DR}(X,i \R)$$
are  isomorphisms. 
\item  Denoting by $J$ the obvious complex structure on $H$, the map $a\mapsto a^{01}$ induces a complex isomorphism $(H,J)\to H^{0,1}$ which restricts to a real isomorphism $H_{\rm cl}\to H_{\rm cl}^{0,1}$.
\item $H_{\rm cl}$ is the kernel of the linear functional $H\to\R$ defined by $a\mapsto \int i \omega_g \wedge da$.  This functional is  trivial if and only if $b_1(X)$ is even.
\item Denoting by $J$ the obvious complex structure on $V$, the map $v\mapsto v^{01}$ induces a complex isomorphism $(V,J)\to V^{0,1}$.
\item When $b_1(X)$ is even, then $H^{0,1}\subset V^{0,1}$ and  $H\subset V$. When $b_1(X)$ is odd then $H^{0,1}\cap V^{0,1}$  (respectively $H\cap V$) has complex codimension 1 in $H^{0,1}$ (respectively in $(H,J)$). 

\item If $b_1(X)=1$ then $H^{0,1}\cap V^{0,1}=\{0\}$, $H\cap V=\{0\}$, and one one has direct sum decompositions
 $$A^{0,1}(X)=H^{0,1}\oplus V^{0,1}\oplus\bar\partial(A^0(X,\C)) \ ,
 $$
 $$i A^1(X,\R)=H\oplus V \oplus  d^c(A^0(X,i \R)) \oplus d(A^0(X,i \R))  \ .
 $$

 \end{enumerate}
 
 \end{thry}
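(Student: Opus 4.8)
The entire argument rests on a single analytic input, proved in \cite{LT1}: the \emph{Gauduchon Laplacian} $Q=p_r\Lambda_g d^cd$ is an isomorphism of $A^0(X,\R)_r$, and the Gauduchon condition $dd^c\omega_g=0$ forces $i\Lambda_g\partial\bar\partial$ to take values in the mean-zero functions, so that $p_r\Lambda_g\partial\bar\partial$ is likewise an isomorphism of $A^0(X,\C)_r$. The second ingredient is the elementary \emph{conjugation dictionary}: an imaginary $1$-form is $a=\alpha-\bar\alpha$ with $\alpha:=a^{01}$, and when $\bar\partial\alpha=0$ a direct computation gives $(da)^{2,0}=(da)^{0,2}=0$ together with $\Lambda_g da=\Lambda_g\partial\alpha-\overline{\Lambda_g\partial\alpha}$ and $\Lambda_g d^ca=-i(\Lambda_g\partial\alpha+\overline{\Lambda_g\partial\alpha})$. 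Hence the pair $p_r\Lambda_g da=p_r\Lambda_g d^ca=0$ is equivalent to $p_r\Lambda_g\partial\alpha=0$, and the pair $\Lambda_g da=\Lambda_g d^ca=0$ to $\Lambda_g\partial\alpha=0$.

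I would first settle conclusions 1, 2 and 4 together. Injectivity of $H^{0,1}\to H^1(X,{\cal O}_X)$ is immediate: $\alpha=\bar\partial f\in H^{0,1}$ forces $p_r\Lambda_g\partial\bar\partial f=0$, so $p_rf=0$ and $\alpha=0$; surjectivity follows by solving $p_r\Lambda_g\partial\bar\partial f=p_r\Lambda_g\partial\beta$ for a $\bar\partial$-closed $\beta$ and passing to $\beta-\bar\partial f\in H^{0,1}$. The isomorphism $H_{\rm cl}\to H^1_{\rm DR}(X,i\R)$ is proved identically, now using $Q=p_r\Lambda_g d^cd$ directly. The dictionary then exhibits $a\mapsto a^{01}$ as a $J$-complex-linear bijection $(H,J)\to H^{0,1}$ and $(V,J)\to V^{0,1}$, and the condition $da=0$ becomes $d(-\bar\alpha+\alpha)=0$, cutting out $H^{0,1}_{\rm cl}$. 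In particular $\dim_\R H=2\,h^{0,1}$ and $\dim_\R H_{\rm cl}=b_1(X)$.

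The core is the functional $L\colon H^{0,1}\to\C$, $L(\alpha)=\int_X\Lambda_g\partial\alpha\,\vol_g=\int_X\partial\alpha\wedge\omega_g=\int_X\alpha\wedge\partial\omega_g$, the last equality by parts (the exact term $\partial(\alpha\wedge\omega_g)=d(\alpha\wedge\omega_g)$ integrates to $0$). Since $\Lambda_g\partial$ commutes with multiplication by $i$, $L$ is $\C$-linear, and as $\Lambda_g\partial\alpha$ is constant on $H^{0,1}$ one reads off $H^{0,1}\cap V^{0,1}=\ker L$. Through the dictionary the functional $\ell$ of conclusion 3 sends $a$ to $\int i\omega_g\wedge da=-2\,\Im L(\alpha)$. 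The key geometric step identifies $\ker\ell$ with $H_{\rm cl}$: if $a\in H$ and $\Im L=0$ then $\Lambda_g da=0$, so $da=\partial\alpha-\overline{\partial\alpha}$ is a $d$-exact, $d$-closed, \emph{primitive} $(1,1)$-form; being primitive it is anti-self-dual for $g$, hence $\Delta_g$-harmonic, hence (being exact) zero, so $a\in H_{\rm cl}$. Thus $\ell$ has rank $2h^{0,1}-b_1=h^{0,1}-h^{1,0}\in\{0,1\}$, equal to $1$ exactly when $b_1(X)$ is odd. By $\C$-linearity $\Im L\equiv0\iff L\equiv0$, so this simultaneously gives conclusion 3 and, via $H^{0,1}\cap V^{0,1}=\ker L$, the inclusions and complex-codimension-$1$ statements of conclusion 5. (Geometrically $L$ is the Serre pairing of $\alpha$ with $[\partial\omega_g]\in H^{2,1}_{\bar\partial}(X)$, which vanishes precisely in the even case.) This paragraph is the one place where the non-Kähler nature of $(X,g)$ is decisive, and it is the main obstacle.

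For conclusion 6 assume $b_1(X)=1$; then $h^{0,1}=1$, $L\not\equiv0$, so $\ker L=\{0\}$, giving $H^{0,1}\cap V^{0,1}=\{0\}$ and $H\cap V=\{0\}$. Since $b_1=1$ forces $X$ of class VII with $p_g=h^{0,2}=0$, the map $\bar\partial\colon A^{0,1}(X)\to A^{0,2}(X)$ is surjective. Given $\alpha$, I would first produce $v\in V^{0,1}$ with $\bar\partial v=\bar\partial\alpha$: take any primitive of $\bar\partial\alpha$, correct it by $\bar\partial A^0$ to kill $p_r\Lambda_g\partial v$ (using the isomorphism $p_r\Lambda_g\partial\bar\partial$), then add the right multiple of a generator of $H^{0,1}$ to kill the remaining constant $\tfrac1{\Vol}L(v)$. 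Then $\alpha-v\in\ker\bar\partial=H^{0,1}\oplus\bar\partial(A^0(X,\C))$ by conclusion 1, yielding the first decomposition, whose directness is exactly the three intersection statements above. The real decomposition of $iA^1(X,\R)$ follows by applying the dictionary and $Q$ to split off $d(A^0(X,i\R))$ and $d^c(A^0(X,i\R))$ and matching $H\oplus V$ with the image of $H^{0,1}\oplus V^{0,1}$. This assembly is routine once Fredholmness is in hand, though one must take care that the $p_r$-twisted operators are not formally self-adjoint.
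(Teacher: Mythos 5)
The paper itself gives no proof of Theorem \ref{Hodge}: the appendix merely records the one analytic input (that $Q=p_r\Lambda_g d^cd$ is an isomorphism of $A^0(X,\R)_r$, citing \cite{LT1}) and states the theorem. Your argument is, as far as I can check, a correct and complete reconstruction, and it uses exactly the intended mechanism: the conjugation dictionary $a=\alpha-\bar\alpha$ reducing the real conditions to $p_r\Lambda_g\partial\alpha=0$ (resp.\ $\Lambda_g\partial\alpha=0$), the isomorphism $p_r\Lambda_g\partial\bar\partial$ on $A^0(X,\C)_r$ (which does follow from the real case by conjugation-equivariance, the Gauduchon condition guaranteeing that $\Lambda_g\partial\bar\partial$ lands in the mean-zero functions), the functional $L(\alpha)=\int_X\alpha\wedge\partial\omega_g$ whose (non)vanishing on $H^{0,1}$ is governed by the parity of $b_1$, and the ``exact $+$ primitive $(1,1)$ $\Rightarrow$ anti-self-dual $\Rightarrow$ zero'' step identifying $\ker\ell$ with $H_{\rm cl}$. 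The dimension count $\mathrm{rk}\,\ell=2h^{0,1}-b_1=h^{0,1}-h^{1,0}$ correctly imports the standard surface-theoretic facts $b_1=h^{0,1}+h^{1,0}$ and $h^{0,1}-h^{1,0}\in\{0,1\}$ with value $1$ exactly for odd $b_1$.

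Two small corrections. First, ``$b_1(X)=1$ forces $X$ of class VII'' is false: secondary Kodaira surfaces and certain non-K\"ahler properly elliptic surfaces also have $b_1=1$. What is true (by the Enriques--Kodaira classification) is that $b_1=1$ implies $p_g=0$; but in fact you do not need either statement. In your construction of $v\in V^{0,1}$ with $\bar\partial v=\bar\partial\alpha$ you may simply take $\alpha$ itself as the ``primitive'' of $\bar\partial\alpha$ before correcting by $\bar\partial A^0$ and by the $H^{0,1}$-generator; the surjectivity of $\bar\partial\colon A^{0,1}(X)\to A^{0,2}(X)$ plays no role, and the decomposition argument goes through verbatim. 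Second, for the four-term real decomposition you should record explicitly that under the dictionary $\bar\partial(A^0(X,\C))$ corresponds to $d^c(A^0(X,i\R))+d(A^0(X,i\R))$ and that this sum is direct (apply $p_r\Lambda_g d^c$ resp.\ $d$ and use $Q$); as written this is only gestured at. Neither point affects the validity of the proof.
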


We also mention the following important
\begin{pr}\label{H1}
Let ($X,g)$ be a connected Gauduchon compact complex manifold, and let $({\cal E},h)$ be a Hermitian holomorphic bundle on $X$ whose Chern connection $A$ is Hermite-Einstein with vanishing Einstein constant.
Then
\begin{enumerate} [1.]
\item $\ker(d_A)=\ker(\bar\partial_A)=H^0({\cal E})$,
\item If $\ker(d_A)=\{0\}$ then 
\begin{enumerate}
\item $\ker \Lambda_g \partial_A$ is complement of $\bar\partial_A(A^{0}(E))$ in $ A^{0,1}(E)$.
\item The space  $H^{0,1}_A:=\{\alpha\in A^{0,1}(E)|\ \bar\partial_A(\alpha)=0,\ \Lambda_g \partial_A \alpha=0\}$
is identified with $H^1({\cal E})$ via the obvious morphism.
\end{enumerate}
\end{enumerate}
\end{pr}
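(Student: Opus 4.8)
The plan is to reduce everything to one global identity, the Gauduchon integration-by-parts formula
$$\int_X i\Lambda_g\partial\bar\partial u\,\vol_g=\int_X i\partial\bar\partial u\wedge\frac{\omega_g^{n-1}}{(n-1)!}=0\qquad(u\in A^0(X,\C),\ n=\dim_\C X),$$
which is the defining consequence of the Gauduchon condition $\partial\bar\partial\omega_g^{n-1}=0$ (this is the self-adjointness statement cited from \cite{LT1}), combined with the vanishing of the Einstein constant in the form $i\Lambda_g F_A=0$. Throughout I use the Chern-connection Leibniz rule and $\bar\partial_A\partial_A s=F_A s-\partial_A\bar\partial_A s$, and I normalise so that $i\Lambda_g h(\partial_A s,\partial_A s)=|\partial_A s|^2$ and $i\Lambda_g h(\bar\partial_A s,\bar\partial_A s)=-|\bar\partial_A s|^2$ pointwise (a routine sign check on $d\bar z\wedge dz$).

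For the first claim, $\ker(d_A)\subseteq\ker(\bar\partial_A)=H^0({\cal E})$ is immediate. Conversely, take $s\in\ker\bar\partial_A$. Expanding $\partial\bar\partial\|s\|^2$ and using $\bar\partial_A\partial_A s=F_A s$ gives
$$i\Lambda_g\partial\bar\partial\|s\|^2=|\partial_A s|^2-h(i\Lambda_g F_A\, s,s)=|\partial_A s|^2,$$
the curvature term disappearing because the Einstein constant vanishes. Integrating against $\vol_g$ and applying the Gauduchon identity kills the left-hand side, so $\int_X|\partial_A s|^2\vol_g=0$, hence $\partial_A s=0$ and $s\in\ker(d_A)$.

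The heart of the matter is the second claim, and the step I expect to be most delicate is exactly the non-Kähler obstruction: since $(X,g)$ is only Gauduchon, the Kähler identity ``$\Lambda_g\partial_A=i\bar\partial_A^\ast$'' fails, and integrating $h(\Lambda_g\partial_A\bar\partial_A s,s)$ by parts directly would leave a torsion term carrying $\partial\omega_g^{n-1}$. I avoid this by never integrating by parts on its own: writing $P:=\Lambda_g\partial_A\bar\partial_A:A^0(E)\to A^0(E)$, I expand the identically vanishing integral $\int_X i\Lambda_g\partial\bar\partial\|s\|^2\,\vol_g=0$ for a \emph{general} $s\in A^0(E)$. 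The two first-order terms produce $\int_X(|\partial_A s|^2+|\bar\partial_A s|^2)\vol_g$, while the two second-order terms combine, using $\Lambda_g\bar\partial_A\partial_A s=\Lambda_g F_A\,s-\Lambda_g\partial_A\bar\partial_A s=-Ps$ (the vanishing Einstein constant once more), into $-2\int_X\Im h(Ps,s)\,\vol_g$. Thus
$$\int_X\big(|\partial_A s|^2+|\bar\partial_A s|^2\big)\vol_g=2\int_X\Im h(Ps,s)\,\vol_g.$$
In particular $Ps=0$ forces $d_A s=0$; so under the hypothesis $\ker(d_A)=0$ the second-order elliptic operator $P$ is \emph{injective}. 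The torsion that would otherwise spoil the estimate is automatically absorbed into the globally vanishing left-hand side, which is precisely what makes the Gauduchon hypothesis do its work here.

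Finally I would feed this injectivity into elliptic theory. The principal symbol of $P$ is $\sim|\xi|^2\id_E$, the same as that of the rough Laplacian, so $P$ has index $0$; injectivity then yields surjectivity (on the relevant Sobolev completions, with smoothness restored by elliptic regularity). Given $\alpha\in A^{0,1}(E)$, I solve $Ps=\Lambda_g\partial_A\alpha$ and set $\alpha_0:=\alpha-\bar\partial_A s$, so that $\Lambda_g\partial_A\alpha_0=\Lambda_g\partial_A\alpha-Ps=0$; together with directness (if $\bar\partial_A s\in\ker\Lambda_g\partial_A$ then $Ps=0$, so $\bar\partial_A s=0$) this gives the decomposition $A^{0,1}(E)=\bar\partial_A(A^0(E))\oplus\ker(\Lambda_g\partial_A)$ of part 2(a). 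Part 2(b) is then formal: the natural map $H^{0,1}_A=\ker\bar\partial_A\cap\ker\Lambda_g\partial_A\to H^1({\cal E})$ is injective because an exact element of $\ker\Lambda_g\partial_A$ lies in $\bar\partial_A(A^0(E))\cap\ker\Lambda_g\partial_A=\{0\}$, and surjective because 2(a) lets one correct any $\bar\partial_A$-closed representative by a $\bar\partial_A$-exact form into $\ker\Lambda_g\partial_A$ without changing its class.
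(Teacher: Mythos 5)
Your proof is correct and follows essentially the same route as the paper: both arguments reduce everything to showing that $\ker(\Lambda_g\partial_A\bar\partial_A)=\ker(d_A)$, and then use the vanishing of the index of this elliptic operator to upgrade injectivity to an isomorphism, from which the decomposition $A^{0,1}(E)=\bar\partial_A(A^0(E))\oplus\ker(\Lambda_g\partial_A)$ and the identification with $H^1({\cal E})$ follow exactly as you describe. The only divergence is in that key analytic step: where you integrate the identity $i\Lambda_g\partial\bar\partial\|s\|^2=|\partial_As|^2+|\bar\partial_As|^2-2\,\mathrm{Im}\,h(Ps,s)$ against $\vol_g$ and invoke the Gauduchon condition, the paper instead applies the maximum principle for $i\Lambda_g\bar\partial\partial$ as in Theorem 2.2.1 of \cite{LT1}; both are standard and both correctly exploit $\Lambda_gF_A=0$.
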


\begin{proof}
Since $\Lambda_g F_A=0$ we get $\Lambda\partial_A \bar\partial_A=- \Lambda\bar \partial_A  \partial_A$.   Using the maximum principle for the operator $i\Lambda_g\bar\partial\partial$ as in the proof of Theorem 2.2.1 p. 50 \cite{LT1} one can prove that $\ker(\Lambda_g \partial_A \bar\partial_A)=\ker d_A$, which proves 1. If we assume that $\ker(d_A)=\{0\}$, then  the operator $\Lambda_g \partial_A \bar\partial_A$ will be injective hence, since it it has vanishing index, it will be an isomorphism. It follows that for any $\alpha\in A^{0,1}(E)$ there exists a unique section $\varphi\in A^0(E)$ such that $\Lambda_g \partial_A (\alpha+\bar\partial_A\varphi)=0$. This proves 2. The third statement follows from 2. and the Dolbeault theorem.
 \end{proof}
\subsection{Integrable semiconnections}
\label{semi}

Let $X$ be a compact connected complex manifold, and $E$ a  differentiable complex vector bundle of rank $r$ on $X$. We recall that a semiconnection on $L$ is a first order differential operator $\eta:A^0(E)\to A^{0,1}(E)$
satisfying the Leibniz rule
$$\eta(\varphi s)=(\bar\partial \varphi)\otimes  s+\varphi \eta(s)\ \forall \varphi\in A^0(X,\C)\ \forall s\in A^0(E)\ .
$$
We denote by $ {\cal A}^{0,1}(E)$ the space of semiconnections on $E$; this space has the structure of an affine space  with model vector space $A^{0,1}(\End(E))$. A semiconnection $\eta$  on $E$ admits natural extensions $A^{0,q}(E)\to A^{0,q+1}(E)$ satisfying the obvious Leibniz rule, and which will be denoted by the same symbol $\eta$. In particular one can consider the composition $\eta\circ \eta:A^0(E)\to A^{0,2}(E)$, which is 0-order operator, hence it is given by multiplication with a form $F_\eta^{02}\in A^{0,2}(\End(E))$. A semiconnection $\eta$ on $E$ is called integrable if $F^{02}_\eta=0$. If this is the case, then $\eta$ defines a holomorphic structure  ${\cal E}_\eta$ on $E$. We will use the same symbol for the corresponding holomorphic vector bundle. The corresponding locally free sheaf on $X$ is just the  sheaf of germs of local sections $s$ of $E$ satisfying the equation $\eta(s)=0$. The assignment $\eta\mapsto {\cal E}_\eta$
defines a bijection between the space $  {\cal A}^{0,1}(E)^{\mathrm{int}}$ of integrable semiconnections on $E$ and the space ${\cal H}(E)$ of holomorphic structures on $E$.  The   group $\Gamma(X,\GL(E))$ acts naturally on $ {\cal A}^{0,1}(E)$  by the formula
$$\varphi\cdot \eta:=\varphi\circ \eta\circ \varphi^{-1}=\eta-(\eta \varphi )\varphi^{-1}\ .
$$

Fix an integrable semiconnection $\delta$ on   $D:=\det(E)$, and denote by ${\cal D}$ the corresponding holomorphic structure on $D$. The subspace   
$$ {\cal A}_\delta^{0,1}(E):=\{\eta\in   {\cal A}^{0,1}(E)|\ \det(\eta)=\delta\}\subset {\cal A}_\delta^{0,1}(E)$$
is closed and invariant under the action of the gauge group ${\cal G}^\C_E:=\Gamma(X,\SL(E))$. 
The space ${\cal H}_{\cal D}(E)$ of holomorphic structures on $E$ which induce ${\cal D}$ on $\det(E)$ appearing in the definitions of the moduli spaces ${\cal M}^\pst_{\cal D}(E)$, ${\cal M}^\st_{\cal D}(E)$, ${\cal M}^\si_{\cal D}(E)$ of section \ref{intro2} can be identified with the subspace
$$
 {\cal A}_\delta^{0,1}(E)^{\mathrm{int}}:=\{\eta\in   {\cal A}^{0,1}(E)^{\mathrm{int}}|\ \det(\eta)=\delta\}\subset {\cal A}^{0,1}(E)^{\mathrm{int}}\ .$$
\subsection{The gauge theoretical Picard group of a compact complex manifold}\label{GaugePic}

Let $X$ be a compact connected complex $n$-manifold, and $L$ a  differentiable complex line bundle on $X$. The space ${\cal A}^{0,1}(L)$ of semiconnections on $L$ is an affine space with model vector space $A^{0,1}(X)$, which is independent of $L$. The obstruction $F_\sigma$ to integrability of a semiconnection $\sigma\in   {\cal A}^{0,1}(L)$ is an element of the space $A^{0,2}(X)$, which is also independent of $L$. Note that  
$$  {\cal A}^{0,1}(L)^{\rm int}\ne\emptyset\Leftrightarrow c_1(L)\in\NS(X)\ ,
$$
where $\NS(X)$ is the kernel of the obvious morphism $H^2(X,\Z)\to H^2(X,{\cal O}_X)$. 

 For a semiconnection $\sigma\in   {\cal A}^{0,1}(L)$ and a form $v\in A^{0,1}(X)$ one has
$$F_{\sigma+ v}^{02}=F_\sigma^{02}+\bar\partial v \ ,
$$
which shows that the  subspace $  {\cal A}^{0,1}(L)^{\rm int}\subset   {\cal A}^{0,1}(L)$ of integrable semiconnections on $L$, {\it if non-empty}, is an affine subspace of  ${\cal A}^{0,1}(L)$ whose model vector space is the space  $Z^{0,1}_{\bar\partial}(X)$ of $\bar\partial$-closed $(0,1)$-forms. We will denote by ${\cal L}_\sigma$ the holomorphic structure on $L$ (and the holomorphic line bundle)  defined by an integrable semiconnection $\sigma$.
The natural action of the complex gauge group ${\cal G}^\C:={\cal C}^\infty(X,\C)$ on  $ {\cal A}^{0,1}(L)$  is given by the formula
$$\varphi\cdot \sigma:=\varphi\circ \sigma\circ \varphi^{-1}=\sigma-(\bar\partial \varphi )\varphi^{-1}\ .
$$
and the stabilizer of any point $\sigma\in   {\cal A}^{0,1}(L)$ with respect to this action is $\C^*$, so we obtain an induced free action of the reduced complex gauge group ${\cal G}^\C_0:= {{\cal G}^\C}/{\C^*}$. 
This gauge action leaves invariant the closed subspace $ {\cal A}^{0,1}(L)^{\rm int}$, and, assuming  $c_1(L)\in\NS(X)$, the quotient 
$${\cal M}(L):=\qmod{ {\cal A}^{0,1}(L)^{\rm int}}{{\cal G}^\C}
$$
is a $h^{1}({\cal O}_X)$-dimensional dimensional complex manifold which can be identified with the connected component $\Pic^{c_1(L)}(X)$ of the Picard group $\Pic(X)$. This manifold can be regarded as a subspace of the infinite dimensional quotient
$$ {\cal B}^{0,1}(L):=\qmod{  {\cal A}^{0,1}(L)}{{\cal G}^\C}\ .
$$

For a fixed point $x_0\in X$ denote by ${\cal G}^\C_{x_0}$ the kernel of the evaluation morphism $\ev_{x_0}: {\cal G}^\C\to\C^*$. This group is naturally isomorphic to ${\cal G}^\C_0$ and acts freely on $  {\cal A}(L)$. The quotient
$$ {\cal A}^{0,1}(L)^{\mathrm{int}}\times_{{\cal G}^\C_{x_0}} L
$$
can be naturally regarded as a line bundle on the product ${\cal M}(L)\times X$. This line bundle comes with a tautological holomorphic structure and will be denoted by ${\cal L}_{x_0}$. Via the identification ${\cal M}(L)=\Pic^{c_1(L)}(X)$ this line bundle corresponds to the Poincaré line bundle normalized at $x_0$.\\

Endow  $L$ with a Hermitian metric $h$. The gauge group of the Hermitian line bundle $L$ is ${\cal G}:={\cal C}^\infty(X,S^1)$. We recall that a Hermitian connection $b$ on $(L,h)$  is called Hermitian-Einstein if is satisfies the equations
$$F_b^{02}=0\ ,\ p_r[\Lambda_g F_{b}]=0 \ .
$$

Note that the second  condition $p_r[\Lambda_g F_{b}]=0$ is equivalent to the classical Hermitian-Einstein condition ``$i\Lambda_g F_{b}$ is a constant" (called the Einstein constant of the connection). In the non-Kählerian framework the map which assigns to a holomorphic line bundle the Einstein constant of a compatible Hermite-Einstein connection  is not necessarily constant  on $\Pic^c(X)$, because the degree map associated with a Gauduchon metric is not a topological invariant in general \cite{LT1}.

The space ${\cal A}^{\rm HE}(L)\subset{\cal A}(L)$ of Hermite-Einstein connections on $L$  is an affine subspace with model vector space 
$$\cal{H}:=\{a\in A^1(X,i\R)|\ \bar\partial a^{01}=0,\ p_r\Lambda d a=0\} \ .
$$
Therefore,  fixing a Hermite-Einstein connection $b_0\in {\cal A}^{\rm HE}(L)$, we have %
$${\cal A}^{\rm HE}(L)=b_0+ \cal{H}\ .$$
As shown in \cite{LT1} in gauge theory it is convenient to replace the usual ``Coulomb slice condition" $d^*a=0$ on 1-forms by the condition $p_r\Lambda_g d^c a=0$. The two conditions are equivalent in the Kählerian case; in the general Gauduchon case they both define slices for the action of $\cal{G}$ on ${\cal A}(L)$. The advantage of the new slice condition introduced in \cite{LT1} is that the intersection 
$$H:=\{a\in i A^{1}(X)|\ \bar\partial a^{01}=0,\ p_r\Lambda_g da=0,\  p_r\Lambda_g d^ca=0 \}
$$
of $\cal{H}$ with $\ker(p_r\Lambda_g d^c)$ is $J$-invariant, so comes with a natural complex structure.  
We define the subgroups
$$G:=\{\varphi\in{\cal G}|\ p_r\Lambda d^c(\varphi^{-1} d\varphi)=0\}\ ,\ G^\C:=\{\varphi\in{\cal G}^\C|\ p_r\Lambda_g \partial (\varphi^{-1}\bar\partial \varphi)=0\} 
$$
of ${\cal G}$ and ${\cal G}^\C$ respectively.

\begin{pr} \label{groups} Let $X$ be a compact complex surfaces endowed with a Gauduchon metric $g$. \begin{enumerate}
\item One has $G^\C=G\times\R_{>0}$.
\item The map ${\cal G}^\C\to   H^1(X,\C)$ given by $\varphi\mapsto [\varphi^{-1} d\varphi]_{\rm DR}$ takes values in the group $2\pi i H^1(X,\Z)$ and induces
\begin{enumerate}
\item An epimorphism $q:G\to 2\pi i H^1(X,\Z)$ and a short exact sequence
$$\{1\}\to S^1\to G\to   2\pi i H^1(X,\Z)\to \{1\}\ ,
$$

\item  An epimorphism $q^\C:G^\C\to 2\pi i H^1(X,\Z)$ and a short exact sequence
$$\{1\}\to \C^*\to G^\C\to   2\pi i H^1(X,\Z)\to \{1\}\ .
$$
\end{enumerate}
\end{enumerate}
\end{pr}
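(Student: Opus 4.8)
The plan is to handle the product decomposition and both exact sequences at once, organizing everything around the logarithmic derivative homomorphism and reducing to two analytic inputs already available on a Gauduchon surface: the operator $Q=\resto{p_r\Lambda_g d^cd}{A^0(X,\R)_r}$ is an isomorphism of $A^0(X,\R)_r$ (the cited Proposition 1.2.8 of \cite{LT1}), and, on functions, $\ker(\Lambda_g\partial\bar\partial)$ consists of the constants while $\im(\Lambda_g\partial\bar\partial)\subset A^0(X,\C)_r$. The first kernel statement is Proposition \ref{H1}(1) applied to the flat trivial line bundle $({\cal O}_X,d)$ (which is Hermite--Einstein with vanishing Einstein constant), and the inclusion of the image is the Gauduchon condition $\int_X\Lambda_g\partial\bar\partial h\,\vol_g=0$. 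First I would record that $\varphi\mapsto\varphi^{-1}d\varphi$ is additive on the abelian group ${\cal G}^\C$, so $\varphi\mapsto[\varphi^{-1}d\varphi]_{\rm DR}$ is a homomorphism, and that its periods are $2\pi i$ times winding numbers of $\varphi$ along loops, hence lie in $2\pi i\Z$; thus it takes values in $2\pi i H^1(X,\Z)$, and restricting to ${\cal G}$ gives $q$.

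The technical heart is a type computation. For $\varphi\in{\cal G}$ the form $a:=\varphi^{-1}d\varphi$ is purely imaginary and closed, so $a^{1,0}=\overline{a^{0,1}}$ and $\bar\partial a^{1,0}+\partial a^{0,1}=0$; feeding this into $d^c$ (with $dd^c=2i\partial\bar\partial$) shows, after contracting with $\Lambda_g$ so that only $(1,1)$-parts survive, that $\Lambda_g d^c(\varphi^{-1}d\varphi)$ is a nonzero constant multiple of $\Lambda_g\partial(\varphi^{-1}\bar\partial\varphi)$. Consequently the two slice conditions coincide on ${\cal G}$, i.e. $G=G^\C\cap{\cal G}$ and in particular $G\subset G^\C$. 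I would then prove Part 1 using the polar decomposition $\varphi=e^{u}\psi$, with $u:=\log|\varphi|:X\to\R$ and $\psi:=\varphi/|\varphi|\in{\cal G}$: since $\varphi^{-1}\bar\partial\varphi=\bar\partial u+\psi^{-1}\bar\partial\psi$, one gets $\Lambda_g\partial(\varphi^{-1}\bar\partial\varphi)=\Lambda_g\partial\bar\partial u+\Lambda_g\partial(\psi^{-1}\bar\partial\psi)$, where the first summand is purely imaginary (as $u$ is real) and the second is purely real (by the identity just proved, $\psi$ being $S^1$-valued). Because $p_r$ commutes with complex conjugation, the defining condition of $G^\C$ splits into its imaginary and real parts, forcing $p_r\Lambda_g\partial\bar\partial u=0$ and $p_r\Lambda_g\partial(\psi^{-1}\bar\partial\psi)=0$ separately. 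The first, together with the Gauduchon mean-zero property and the maximum-principle input, forces $u$ to be a real constant; the second says exactly $\psi\in G$. Hence $\varphi=c\,\psi$ with $c\in\R_{>0}$, $\psi\in G$, and as ${\cal G}^\C$ is abelian with $G\cap\R_{>0}=\{1\}$, this is the internal direct product $G^\C=G\times\R_{>0}$.

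It then remains to identify the kernels and prove surjectivity. If $\varphi\in G$ has $q(\varphi)=0$, then $\varphi^{-1}d\varphi$ is exact with a global real primitive, so $\varphi=e^{i\tau}$ and the slice condition becomes $Q\tau=0$; invertibility of $Q$ forces $\tau$ constant, so $\varphi\in S^1$, giving $\ker q=S^1$, whence by Part 1 also $\ker q^\C=S^1\times\R_{>0}=\C^*$. For surjectivity I would realize an arbitrary class $\kappa\in 2\pi i H^1(X,\Z)$ by some $\varphi_0\in{\cal G}$ (every integral class comes from a smooth map to $S^1$) and correct it within its class: for $\varphi=\varphi_0e^{i\tau}$ one has $\varphi^{-1}d\varphi=\varphi_0^{-1}d\varphi_0+i\,d\tau$, so the $G$-condition reads $Q\tau=iP$ with $P:=p_r\Lambda_g d^c(\varphi_0^{-1}d\varphi_0)\in A^0(X,i\R)_r$. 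Since $iP\in A^0(X,\R)_r$ and $Q$ is an isomorphism there, a unique such $\tau$ exists, placing $\varphi$ in $G$ with $q(\varphi)=\kappa$; as $G\subset G^\C$ with $q^\C|_G=q$, the map $q^\C$ is onto as well. Assembling these facts yields the two short exact sequences.

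I expect the main obstacle to be the decoupling in Part 1: seeing that on a Gauduchon surface the modulus direction and the phase direction of $\varphi$ are governed by two genuinely different operators, the operator $p_r\Lambda_g\partial\bar\partial$ (whose kernel is the constants) and $Q$ (invertible on mean-zero functions), and that the single condition defining $G^\C$ separates cleanly into its imaginary and real parts. This is precisely where the Gauduchon hypothesis enters twice: once so that $\Lambda_g\partial\bar\partial u$ has zero mean, making $p_r$ harmless and the maximum principle applicable, and once implicitly through the inversion of $Q$ in the phase equation. The identity $G=G^\C\cap{\cal G}$ is the other point requiring care, but it is routine $\partial\bar\partial$-bookkeeping once closedness and reality of $\varphi^{-1}d\varphi$ are exploited.
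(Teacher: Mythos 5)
Your proof is correct and follows essentially the same route as the paper: the logarithmic derivative lands in $2\pi i H^1(X,\Z)$ via winding numbers, the modulus of an element of $G^\C$ is forced to be constant by the Gauduchon mean-zero property together with the invertibility of $Q$ (equivalently the maximum principle for $\Lambda_g\partial\bar\partial$), and surjectivity is obtained by correcting a representative $\varphi_0\in{\cal G}$ of a given integral class by a factor $e^{i\tau}$ with $\tau$ produced by inverting $Q$. The only cosmetic differences are that you decouple the real and imaginary parts of the slice condition where the paper instead adds the conjugate equation to isolate $\log|\varphi|^2$, and that you spell out the kernel computations which the paper leaves implicit; note one harmless sign slip: for purely imaginary $a$ one has $a^{1,0}=-\overline{a^{0,1}}$, not $+\overline{a^{0,1}}$.
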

\begin{proof}
(1) For $\varphi\in {\cal G}^\C$ write locally $\varphi=e^f$ for a (locally defined) smooth complex function $f$ and note that 
$$\partial (\varphi^{-1}\bar\partial \varphi)=\partial \bar\partial f\ ,\ \partial (\bar\varphi^{-1}\bar\partial \bar \varphi)=\partial \bar\partial \bar f\ ,\ \partial ((\varphi\bar\varphi)^{-1}\bar\partial (\varphi\bar\varphi))=\partial \bar\partial (f+\bar f)\ .$$
 Since $i\partial \bar\partial$, these formulae show that 
$$p_r\Lambda_g \partial (\varphi^{-1}\bar\partial \varphi) =0\Rightarrow  p_r\Lambda_g  \partial ((\varphi\bar\varphi)^{-1}\bar\partial (\varphi\bar\varphi))=0 \Rightarrow p_r\Lambda_g  d^c d\log|\varphi|^2=0\ ,
$$
which implies that $|\varphi|$ is constant. Therefore any element $\varphi\in G^\C$ can be written as $ \varphi= c e^\psi$ where $c\in\R_{>0}$ and $\psi\in {\cal G}$. Writing locally $\psi= e^g$ for a pure imaginary (locally defined) function we get $\Lambda_g\partial \bar\partial g=0$, which is equivalent to $\Lambda_g d^c d g=0$. This implies $\Lambda_g d^c(\psi^{-1} d\psi)=0$, hence $\psi\in G$.
\\ \\
(2)
The Cauchy formula shows that  the Rham cohomology class of the complex 1-form $\frac{1}{2\pi i}z^{-1} dz$ is the canonical generator $\gamma$ of $H^1(\C^*,\Z)$. Therefore for any $\varphi\in {\cal G}^\C$ one has
$$[\varphi^{-1} d\varphi]_{\rm DR}=2\pi i\varphi^*(\gamma)\in 2\pi i H^1(X,\Z)\ .
$$
(a) Since $S^1$ is a $K(\Z,1)$-space it follows easily that the map
$$[X,S^1] \to  H^1(X,\Z)
$$ 
given by $[\varphi]\mapsto \varphi^*(\gamma)$ is an isomorphism. Therefore for every $y\in 2\pi i H^1(X,\Z)$ there exists $\varphi_y\in {\cal G}$ with $[\varphi_y^{-1} d\varphi_y]_{\rm DR}=y$. It suffices to find $f\in i A^0(X,\R)$ such that putting $\varphi=e^f \varphi_y$  one has 
$p_r\Lambda_g d^c (\varphi^{-1} d\varphi)=0$.
This condition is equivalent to $p_r\Lambda_g d^c d f+ p_r\Lambda_g d^c (\varphi_y^{-1} d\varphi_y)=0$. Since the operator $Q$ is an isomorphism, this equation has a unique solution $f\in i A^0(X,\R)_r$.
\\ \\
(b) This follows from (1) and (2) (a).
\end{proof}
\begin{co} \label{kerev} Denoting $G_{x_0}:=\ker(ev_{x_0}:G \to S^1)$, $G_{x_0}^\C:=\ker(ev_{x_0}:G^\C\to \C^*)$
one has
$$G_{x_0}^\C=G_{x_0}=\{\varphi\in {\cal C}^\infty(X,S^1)|\ p_r \Lambda_g d^c (\varphi^{-1} d\varphi)=0,\ \varphi(x_0)=1\}\ .
$$

\end{co}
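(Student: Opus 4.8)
The plan is to derive both equalities directly from Proposition \ref{groups}, so that no further analytic input is needed. The second equality is purely formal, and I would dispatch it first by unwinding definitions. By definition $G_{x_0}=\ker(ev_{x_0}:G\to S^1)$, where $G$ consists of those $\varphi\in{\cal C}^\infty(X,S^1)$ satisfying $p_r\Lambda_g d^c(\varphi^{-1}d\varphi)=0$ and $ev_{x_0}(\varphi)=\varphi(x_0)$. Hence an element lies in $G_{x_0}$ precisely when it is $S^1$-valued, satisfies the slice equation, and has $\varphi(x_0)=1$, which is exactly the displayed set on the right.

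For the first equality $G_{x_0}^\C=G_{x_0}$ I would invoke the splitting $G^\C=G\times\R_{>0}$ established in Proposition \ref{groups}(1). Concretely, every $\varphi\in G^\C$ factors uniquely as $\varphi=c\psi$ with $c=|\varphi|\in\R_{>0}$ a positive \emph{constant} and $\psi=\varphi/c\in G$ an $S^1$-valued map. The decisive step is then to evaluate at $x_0$ and take moduli: since $|\psi(x_0)|=1$ one obtains $|\varphi(x_0)|=c$. Consequently the defining condition $\varphi(x_0)=1$ of $G_{x_0}^\C$ forces first $c=1$ and then $\psi(x_0)=1$, so that $\varphi=\psi$ lies in $G_{x_0}$; this gives $G_{x_0}^\C\subset G_{x_0}$. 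For the reverse inclusion I would note that $G\subset G^\C$ via the factor $c=1$, and that the two evaluation conditions coincide, whence $G_{x_0}\subset G_{x_0}^\C$.

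I do not expect a genuine obstacle here, since all the real work is already packaged into the decomposition $G^\C=G\times\R_{>0}$, and the corollary is essentially a bookkeeping consequence of it together with the factor $c=1$. The only point deserving a moment's care is the modulus computation $|\varphi(x_0)|=c$, which relies on two features guaranteed by Proposition \ref{groups}(1): that the $G$-factor is $S^1$-valued, and that $c$ is a genuine constant rather than merely a function on $X$.
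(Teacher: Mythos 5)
Your argument is correct and is exactly the route the paper intends: the corollary is stated without a separate proof precisely because it is the bookkeeping consequence of the splitting $G^\C=G\times\R_{>0}$ from Proposition \ref{groups}(1) (the $\R_{>0}$-factor being the constant positive scalars, killed by the normalization $\varphi(x_0)=1$), together with unwinding the definition of $G$. Nothing is missing.
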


\begin{co}\label{moduli}
\begin{enumerate}
\item Let $b_0$ be a Hermite-Einstein connection on $L$. The embedding $b_0+H\hookrightarrow b_0+{\cal H} $ induces an isomorphism of real analytic moduli spaces
$$\qmod{ b_0+H}{G}\textmap{\simeq}  {\cal M}^{HE}(L)\ .
$$
\item  Let $\sigma_0$ be an integrable semiconnection on $L$. The embedding $\sigma_0+H^{0,1}\hookrightarrow \sigma_0+{\cal Z}^{0,1}_{\bar\partial}(X) $ induces an isomorphism of complex moduli spaces
$$\qmod{\sigma_0+H^{0,1}}{G^\C}\textmap{\simeq}  {\cal M}(L)\ .
$$
\end{enumerate}

\end{co}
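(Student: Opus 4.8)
The plan is to prove both statements by the same mechanism: to exhibit the indicated inclusions as finite-codimensional \emph{transversal slices} for the relevant gauge actions, the residual groups being exactly $G$ and $G^\C$. Since the inclusion $b_0+H\hookrightarrow b_0+{\cal H}={\cal A}^{\HE}(L)$ is $G$-equivariant and $G\subset{\cal G}$, it descends to a map $\Phi:\qmod{b_0+H}{G}\to{\cal M}^{HE}(L)$, and likewise the $G^\C$-equivariant inclusion $\sigma_0+H^{0,1}\hookrightarrow \sigma_0+Z^{0,1}_{\bar\partial}(X)={\cal A}^{0,1}(L)^{\rm int}$ descends to $\Phi^\C:\qmod{\sigma_0+H^{0,1}}{G^\C}\to{\cal M}(L)$. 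I would reduce the corollary to showing that $\Phi$ and $\Phi^\C$ are bijective and that each is a local isomorphism of the appropriate type.

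For surjectivity of $\Phi$ I would use the invertibility of $Q$ (Proposition 1.2.8 of \cite{LT1}). Given $a\in{\cal H}$, every condition defining $H$ except $p_r\Lambda_g d^c a=0$ is automatic for $a-df$ when $f\in A^0(X,i\R)$, since exact forms are closed and $\bar\partial$-closed on their $(0,1)$-part. Solving $Q(f)=p_r\Lambda_g d^c a$ for $f\in A^0(X,i\R)_r$ and setting $\varphi:=e^f\in{\cal G}$, the connection $\varphi\cdot(b_0+a)$ lies in $b_0+H$; hence every ${\cal G}$-orbit meets the slice. The complex case is identical: the operator $p_r\Lambda_g\partial\bar\partial$, which up to a nonzero constant is the complexification of $Q=p_r\Lambda_g d^c d$, is an isomorphism of $A^0(X,\C)_r$, so one solves $p_r\Lambda_g\partial\bar\partial f=p_r\Lambda_g\partial\alpha$ and gauges any integrable $\sigma_0+\alpha$ into $\sigma_0+H^{0,1}$ by $\varphi=e^f\in{\cal G}^\C$.

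Injectivity is immediate from the very definitions of $G$ and $G^\C$. If $b_0+a_2=\varphi\cdot(b_0+a_1)$ with $a_1,a_2\in H$, then $\varphi^{-1}d\varphi=a_1-a_2\in H$, so in particular $p_r\Lambda_g d^c(\varphi^{-1}d\varphi)=0$ and $\varphi\in G$; the two points are therefore $G$-equivalent. The same computation with $\varphi^{-1}\bar\partial\varphi=\alpha_1-\alpha_2\in H^{0,1}$ gives $\varphi\in G^\C$ in the holomorphic case. To upgrade the resulting bijections to isomorphisms of moduli spaces I would verify that $H$ and $H^{0,1}$ are genuine transversal slices. The ${\cal G}$-orbit direction at $b_0$ is $d(A^0(X,i\R))$, and the two facts $H\cap d(A^0(X,i\R))=0$ and ${\cal H}=H\oplus d(A^0(X,i\R))$ — both consequences of the invertibility of $Q$ together with Theorem \ref{Hodge} — show that $b_0+H$ meets each orbit transversally in a single $G$-orbit. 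In the holomorphic case the ${\cal G}^\C$-orbit direction is $\bar\partial(A^0(X,\C))$, one has $H^{0,1}\oplus\bar\partial(A^0(X,\C))=Z^{0,1}_{\bar\partial}(X)$ with $H^{0,1}\cong H^1(X,{\cal O}_X)$ (first part of Theorem \ref{Hodge}), and since $H^{0,1}$ is cut out by $\C$-linear equations the slice carries the correct complex structure.

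The essentially formal part is the bijection; the step requiring genuine care is this last structural upgrade. Concretely, the slices have infinite codimension, so the identification of the finite-dimensional quotient $\qmod{b_0+H}{G}$ with ${\cal M}^{HE}(L)$ is an instance of the slice theorem in suitable Sobolev completions — which is exactly the assertion that $p_r\Lambda_g d^c a=0$ (resp. $p_r\Lambda_g\partial\alpha=0$) is a slice condition, established in \cite{LT1}. I would also keep track of the disconnectedness of $G$ and $G^\C$: by Proposition \ref{groups} the effective residual group is the lattice $2\pi i H^1(X,\Z)=G/S^1=G^\C/\C^*$, acting on the slice by translations, which is what produces the global torus structure on ${\cal M}(L)=\Pic^c(X)$. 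The local analysis above is unaffected by this and yields the claimed real-analytic and holomorphic isomorphisms.
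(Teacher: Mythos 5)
Your proposal is correct and follows exactly the route the paper intends: the corollary is stated there without a written proof, as a consequence of the invertibility of $Q$ (giving the direct sum decompositions ${\cal H}=H\oplus d(A^0(X,i\R))$ and $Z^{0,1}_{\bar\partial}(X)=H^{0,1}\oplus\bar\partial(A^0(X,\C))$, i.e.\ the slice property of the condition $p_r\Lambda_g d^c a=0$ from \cite{LT1}) together with the fact that $G$, $G^\C$ are by definition the gauge transformations preserving the slices. Your surjectivity argument (solve $Q(f)=p_r\Lambda_g d^c a$, resp.\ its complexification), injectivity argument, and the remark that the whole action is affine so the bijection is automatically a real-analytic (resp.\ holomorphic) isomorphism are all sound.
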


The stabilizer of a  point $b=b_0+h\in b_0+H$ (respectively of a point $\sigma=\sigma_0+\chi\in \sigma_0+H^{0,1}$) with respect to the $G$-action (respectively $G^\C$-action) is $S^1$  (respectively $\C^*$). Therefore, using  Proposition \ref{groups}, we obtain the following finite dimensional descriptions of the moduli spaces:

\begin{re} \label{FinDImKH} We have natural identifications
\begin{equation}\label{quotients}
{\cal M}^{HE}(L)=\qmod{ b_0+H}{2\pi i H^1(X,\Z)}\ ,\   {\cal M}(L)=\qmod{ \sigma_0+H^{0,1}}{2\pi i H^1(X,\Z)} \ ,
\end{equation}
where $2\pi i H^1(X,\Z)$ acts on the two affine spaces via the identifications
$$\qmod{G}{S^1}=2\pi i H^1(X,\Z)\ ,\ \qmod{G^\C}{\C^*}=2\pi i H^1(X,\Z)
$$
induced by the epimorphisms $q$, $q^\C$. Choosing  $\sigma_0=\bar\partial_{b_0}$, the Kobayashi-Hitchin isomorphism 
$${\cal M}^{HE}(L)\textmap{\simeq KH}  {\cal M}(L)$$
 is induced  via the formulae (\ref{quotients})  by the isomorphism $I:H\to H^{0,1}$.  \end{re}
Put $\Sigma:=\sigma_0+H^{0,1}$. The product $\Sigma\times  L$ can be regarded as a line bundle over  $\Sigma\times X$. This line bundle comes with a tautological integrable semiconnection $\sigma_{\rm taut}$ characterized by the following conditions:
\begin{enumerate}  
\item For any $\sigma\in \Sigma$  the restriction of $\sigma_{\rm taut}$ to the bundle $\{\sigma\}\times L$ over the fiber $\{\sigma\}\times X$ coincides with $\sigma$ via the obvious identifications.
\item For any $x\in X$ the restriction of $\sigma_{\rm taut}$ to the   line bundle $\Sigma\times L_x$ over the slice $\Sigma\times \{x\}$ coincides with the standard trivial semiconnection on this trivial line bundle.
\end{enumerate}

Endowing  $\Sigma\times  L$ with the obvious product  $G^{\C}$-action, we see that $\sigma_{\rm taut}$ is $G^{\C}$-invariant. Fixing $x_0\in X$ we can regard  $\Sigma\times X$ as a principal $G_{x_0}$-bundle over the product ${\cal M}(L)\times X$, hence we can construct the associated vector bundle
$$(\Sigma\times X)\times_{G^\C_{x_0}}L=\qmod{\Sigma\times  L} {G_{x_0}}\ ,
$$
which will be regarded as a line bundle over ${\cal M}(L)\times X$.
  \begin{dt}\label{Poincare} The universal (Poincaré) line bundle normalized at a point $x_0\in X$ is the holomorphic bundle $\mathscr{L}_{x_0}$ obtained by endowing the quotient bundle $\qmod{\Sigma\times  L} {G_{x_0}}$ over ${\cal M}(L)\times X$ with the integrable semiconnection induced by $\sigma_{\rm taut}$
  \end{dt}
  \begin{re}
 Via the standard identification ${\cal M}(L)=\Pic^{c_1(L)}(X)$,  the universal bundle $\mathscr{L}_{x_0}$ coincides with the Poincaré line bundle normalized at $x_0$.
  \end{re}
  In a similar way, putting $S:=b_0+H$ one obtains a universal Hermitian connection $\A_{x_0}$ on the universal Hermitian line bundle $\L_{x_0}=\qmod{S\times L}{G_{x_0}}$. 
\begin{re}\label{metric}  Since $G_{x_0}^\C=G_{x_0}$, the former group also acts by unitary isomorphisms on the line bundle $\Sigma\times  L$, hence we obtain a Hermitian metric on the universal holomorphic  line bundle $\mathscr{L}_{x_0}$. This metric is fiberwise (in the $X$-direction) Hermitian-Einstein,    and depends only on  $g$  and the metric $h_{x_0}$ on the line $L_{x_0}$. It can obtained solving fiberwise the Hermitian-Einstein equation.
\end{re}
\end{document}